\def\@settitle{\begin{center}%
		\baselineskip14\p@\relax
		\normalfont\LARGE\scshape\bfseries
		%\uppercasenonmath\@title
		\@title
	\end{center}%
}
\def\@setauthors{%
  \begingroup
  \def\thanks{\protect\thanks@warning}%
  \trivlist
  \centering\small \@topsep30\p@\relax
  \advance\@topsep by -\baselineskip
  \item\relax
  \author@andify\authors
  \def\\{\protect\linebreak}%
%  \MakeUppercase{\authors}%
  \authors%
  \ifx\@empty\contribs
  \else
    ,\penalty-3 \space \@setcontribs
    \@closetoccontribs
  \fi
  \endtrivlist
  \endgroup
}
\def\subsection{\@startsection{subsection}{2}%
	\z@{.5\linespacing\@plus.7\linespacing}{.5\linespacing}%
	{\normalfont\large\bfseries}}
\def\subsubsection{\@startsection{subsubsection}{3}%
	\z@{.5\linespacing\@plus.7\linespacing}{.5\linespacing}%
	{\normalfont\itshape}}
\definecolor{darkblue}{rgb}{0.0, 0.0, 0.45}
\date{\today}
\DeclareMathOperator*{\argmin}{argmin}
\newcommand{\inner}[2]{\langle #1,#2 \rangle}
\newtheorem{theorem}{Theorem}[section]
\newtheorem{remark}[theorem]{Remark}
\newtheorem{proposition}[theorem]{Proposition}
\newcolumntype{?}{!{\vrule width 1.5pt}}
\title[Inverse Optimization for Routing Problems]{Inverse Optimization for Routing Problems}
\author{Pedro Zattoni Scroccaro, Piet van Beek, Peyman Mohajerin Esfahani and Bilge Atasoy}%
\thanks{The authors are with the Delft Center for Systems and Control ({\tt\small \{P.ZattoniScroccaro, P.MohajerinEsfahani\}@tudelft.nl, pietvanbeek@live.nl}) and the Department of Maritime and Transport Technology ({\tt\small B.Atasoy@tudelft.nl}), Delft University of Technology, Delft, The Netherlands.}
\thanks{The authors would like to thank Breno A. Beirigo for bringing the Amazon Challenge to their attention. This work is partially supported by the ERC grant TRUST-949796.}
\begin{document}
\maketitle

\begin{abstract}
We propose a method for learning decision-makers' behavior in routing problems using Inverse Optimization (IO). The IO framework falls into the supervised learning category and builds on the premise that the target behavior is an optimizer of an unknown cost function. This cost function is to be learned through historical data, and in the context of routing problems, can be interpreted as the routing preferences of the decision-makers. In this view, the main contributions of this study are to propose an IO methodology with a hypothesis function, loss function, and stochastic first-order algorithm tailored to routing problems. We further test our IO approach in the Amazon Last Mile Routing Research Challenge, where the goal is to learn models that replicate the routing preferences of human drivers, using thousands of real-world routing examples. Our final IO-learned routing model achieves a score that ranks 2nd compared with the 48 models that qualified for the final round of the challenge. Our examples and results showcase the flexibility and real-world potential of the proposed IO methodology to learn from decision-makers' decisions in routing problems.
\end{abstract}

%===============================================================================

\section{Introduction}\label{sec:introduction}

Last-mile delivery is the last stage of delivery in which shipments are brought to end customers. Optimizing delivery routes is a well-researched topic, but most of the classical approaches for this problem focus on minimizing the total travel time, distance, and/or cost of the routes. However, the routes driven by expert drivers often differ from the routes that minimize a time or distance criterion. This phenomenon is related to the fact that human drivers take many different factors into consideration when choosing routes, e.g., good parking spots, support facilities, gas stations, avoiding narrow streets or streets with slow traffic, etc. This contextual knowledge of expert drivers is hard to model and incorporate into traditional optimization strategies, leading to expert drivers choosing potentially more convenient routes under real-life operational conditions, contradicting the optimized route plans. Thus, developing models that capture and effectively exploit this tactic knowledge could significantly improve the real-world performance of optimization-based routing tools. For instance, in 2021, Amazon.com, Inc. proposed the \textit{Amazon Last Mile Routing Research Challenge} \cite{amazon2021amazon} (referred to
as the Amazon Challenge in the following). For this challenge, Amazon released a dataset of real-world delivery requests and the respective human routes. The goal was for participants to propose novel methods that use this historical data to learn how to route like an expert human driver, thus incorporating their experience and knowledge when routing vehicles for new delivery requests.

In the literature, several approaches have been proposed to incorporate information from historical route data into the planning of new routes. Some of those methods use discrete choice models and the routes of the drivers are used to determine a transition probability matrix \cite{fosgerau2013link}. For instance, in \cite{canoy2019vehicle, canoy2021learn, canoy2021tsp, canoy2024probability}, a Markov chain framework is used to learn the weights associated with each edge of the graph, which are interpreted as the likelihood of that arc appearing in the optimal solution of the routing problem. Other approaches use inverse reinforcement learning to learn a routing policy that approximates the ones from historical data \cite{wulfmeier2017large, liu2020integrating}. The Technical Proceedings of the Amazon Challenge \cite{winkenbach2021technical} contains 31 articles with approaches that were submitted to the Amazon Challenge. Many of these approaches rely on learning specific patterns in the sequence of predefined geographical city zones visited by expert drivers. The paper \cite{wu2022learning} uses a sequential probability model to encode the drivers' behavior and uses a policy iteration method to sample zone sequences from the learned probability model. The paper \cite{pitombeiraneto2021route} develops an Inverse Reinforcement Learning (IRL) approach for the Amazon Challenge, which despite the name, does not share many similarities with our Inverse Optimization (IO) approach. In particular, in this approach, the TSP is interpreted as a Dynamic Programming (DP) problem, thus, the goal of IRL is to learn the stage cost of this DP from example TPS routes. However, DPs are known to suffer from the curse of dimensionality, that is, these problems become intractable to solve when the dimension of the problem becomes too large (such as for the TSP from the challenge). These issues are reflected in the poor performance of the submissions that use IRL. The IRL method in \cite{song2021inverse} is closer to our IO methodology, in the sense that a weight matrix is learned from data. However, different from our IO approach, which learns the entire weight matrix simultaneously, they use a Neural Network to map node features to a single edge weight, thus, not accounting for the features of neighboring edges. A successful approach to tackle the challenge was to adjust the travel time matrix between zones based on patterns observed in the training dataset. In particular, both the second-place \cite{guo2021last} and third-place \cite{arslan2021data} submissions used this approach. Namely, they extracted rules (i.e., patterns observed in the behavior of the human drivers) through descriptive analysis of the training dataset, and based on these rules, they derived ``discouragement multipliers'', which are simply constants that multiply each value of the travel time matrix. These multipliers were tuned so that the TSP routes computed using the modified travel times enforce the rules previously extracted. Our work shares similarities with \cite{guo2021last} and \cite{arslan2021data}, in the sense that it also uses penalization constants to enforce the behaviors observed in the data. However, differently from them, we combine these penalizations with a custom weight matrix \textit{learned} using IO. The IO methodology in this paper can be interpreted as a way to combine information extracted from a descriptive analysis of the data with information automatically learned from the data. The authors in \cite{contreras2021learning} mention IO as a potential method to effectively tackle the challenge, however, due to the complexity of developing a tailored IO methodology for routing problems, the authors instead used standard ML techniques. The approach that won the Amazon Challenge is based on a constrained local search method, where given a new delivery request, they extract precedence and clustering constraints by analyzing similar historical human routes in the training dataset \cite{cook2022constrained}. Thus, their model is \textit{nonparametric}, in the sense that the entire training dataset is required whenever the route for a new delivery request needs to be computed. This is in contrast with our \textit{parametric} IO model, that is, our model is parametrized by a learned vector of parameters, with a dimension that does not depend on the number of examples in the training dataset.

In IO problems, the goal is to model the behavior of an expert agent, which given an exogenous signal, returns a response action. It is assumed that to compute its response, the expert agent solves an optimization problem that depends on the exogenous signal. In this work, we assume that the constraints imposed on the expert are known, but its cost function is unknown. Therefore, the goal of IO is to, given examples of exogenous signals and corresponding expert responses, model the cost function being optimized by the expert. As an example, in a Capacitated Vehicle Routing Problem (CVRP) scenario, the exogenous signal can be a particular set of customers and their respective demands, and the expert's response can be the CVRP routes chosen by the decision-maker to serve these customers and their demands. The papers \cite{burton1992instance, farago2003inverse, barmann2017emulating} use IO to learn the cost matrix of shortest path problems. The paper \cite{chung2012inverse} investigates IO for the Traveling Salesperson Problem (TSP), where they study the problem of, given an edge-weighted complete graph, a single TSP tour, and a TSP solving algorithm, finding a new set of edge weights so that the given tour can be an optimal solution for the algorithm, and is closest to the original weights. Moreover, cutting-plane methods have been proposed to solve general IO for mixed-integer programs \cite{wang2009cutting, duan2011heuristic, bodur2022inverse}, in particular, the authors in \cite{bodur2022inverse} propose the use of \textit{trust regions} to lower the computational cost of generating cuts. IO has also been used to learn household activity patterns \cite{chow2012inverse}, for network learning \cite{chow2014nonlinear, xu2018network}, and more recently for learning complex model predictive control schemes \cite{akhtar2021learning}. For more examples of applications of IO, we refer the reader to the recent review paper \cite{chan2023inverse} and references therein. Regarding our IO methodology, the paper closest to ours is \cite{zattoniscroccaro2023learning}, where the authors propose a general IO framework, together with a general family of first-order optimization algorithms (a.k.a. mirror-descent algorithms) to solve IO problems. In this work, we tailor and extend this general methodology, for the case of routing problems. Our tailored approach is flexible to what type of routing problem the expert is assumed to solve, handles cases when there is a large number of routing examples, as well as cases when solving the routing problem is computationally expensive. More specifically, the main contributions of this paper are summarized as follows:

\begin{enumerate}[(a)]
    \item \textbf{(IO methodology for routing problems)} We propose an IO methodology, which has the following specifications tailored for routing problems:
    
    \begin{enumerate}[(i)]
        \item \textit{Hypothesis class}: We introduce a hypothesis class of affine cost functions with nonnegative cost vectors representing the weights of edges of a graph, along with an affine term that can capture extra desired properties for the model (Section \ref{sec:hypothesis}). This generalizes the linear hypothesis class of \cite{zattoniscroccaro2023learning}.
        \item \textit{Loss function}: We introduce a loss function for IO applied to routing problems (Section \ref{sec:loss_func}). This loss function extends the Augmented Suboptimality Loss of \cite{zattoniscroccaro2023learning} by using our affine hypothesis class. Moreover, exploiting the fact that the decision variables of the routing problem can be modeled using binary variables, we show that the nonconvex cost function of the inner minimization problem of the loss function can be equivalently reformulated as a convex function (Proposition \ref{prop:ASL_reformulation}). This result is of independent interest since this reformulation can be used for any IO problem with binary decision variables.
        \item \textit{First-order algorithm}: We also design a first-order algorithm specialized to minimize our tailored IO loss function, and is particularly efficient for IO problems with large datasets and with computationally expensive decision problems, e.g., large VRPs (Section \ref{sec:algorithm}).  Compared to the SAMD algorithm proposed in \cite{zattoniscroccaro2023learning} for general IO problems, our algorithm tailors it to our affine hypothesis function and new loss function reformulation and also uses a ``\textit{reshuffled}'' sampling strategy, which improves its empirical performance compared to the uniform sampling employed by the SAMD algorithm  (Section \ref{sec:IO_for_VRPTW}).
        \item \textit{Modeling flexibility}: We showcase the flexibility of our IO methodology by demonstrating how three specific instances of routing problems (CVRP, VRPTW, and TSP) can be modeled using our framework (Sections \ref{sec:IO_for_CVRP},  \ref{sec:IO_for_VRPTW}, and \ref{sec:IO_for_TSP}). We present numerical results that give intuition on how our tailored algorithm works for routing problems (Figure \ref{fig:OI_for_SCVRP}), as well as its efficacy in handling large routing problems (Figure \ref{fig:vrptw_out}).
    \end{enumerate}  
    
    \item \textbf{(Application to the Amazon Challenge)} We evaluate our IO methodology on the Amazon Challenge, namely, we learn the drivers' preferences in terms of geographical city zones using IO. We present results for a general IO approach as well as for the tailored approach developed in this paper, showcasing how insights about the structure of the problem at hand can be seamlessly integrated into our IO methodology, illustrating its flexibility and modeling power (Sections \ref{sec:IO_amazon}). Our approach achieves a final Amazon score of \textbf{0.0302}, which ranks 2nd compared to the 48 models that qualified for the final round of the Amazon Challenge (Figure \ref{fig:score_leaderboard}). Moreover, using an approximate TSP solver and a fraction of the training dataset, we can learn a good routing model in just a few minutes, demonstrating the possibility of using our IO approach for real-time learning problems (Table \ref{table:summary_results}). All of our experiments are reproducible, and the underlying source code is available in \cite{zattoniscroccaro2023amazon}.
\end{enumerate}

The rest of the paper is organized as follows. In the remainder of this section, we define the mathematical notation used in this paper. In Section \ref{sec:inverse}, we introduce the IO methodology used in this paper and our IO approach for routing problems. In Section \ref{sec:modelling}, we present modeling examples for CVRPs, VRPTWs, and TSPs. In Section \ref{sec:challenge} we introduce the Amazon Challenge, its datasets, objective, scoring metric, and our complete IO approach to tackle it. In Section \ref{sec:numerical} we present our numerical results for the Amazon Challenge, as well as further numerical results.

\textbf{Notation.} For vectors $x,y \in \mathbb{R}^m$, $x \odot y$, $\exp(x)$ and $\max(x,y)$ mean element-wise multiplication, element-wise exponentiation and element-wise maximum, respectively. The Euclidean inner product between two vectors $x,y \in \mathbb{R}^m$ is denoted by $\inner{x}{y}$. The set of integers from $1$ to $N$ is denoted as $[N]$. A set of indexed values is compactly denoted by $\{x^{[i]}\}_{i=1}^N \coloneqq \{x^{[1]},\ldots,x^{[N]}\}$. Given a set $A$, we denote its complement by $\bar{A}$ and its cardinality by $|A|$.

%%%%%%%%%%%%%%%%%%%%%%%%%%%%%%%%%%%%%%%%%%%%%%%%%%%%%%%%%%%%%%%%%%%%

\section{Inverse Optimization}
\label{sec:inverse}

In this section, we give a brief introduction to the IO methodology used in this paper and describe our IO approach to learning from routing problems. Let us begin by formalizing the IO problem. Consider an exogenous signal $\hat{s} \in \mathbb{S}$, where $\mathbb{S}$ is the signal space. Given a signal, an expert agent is assumed to solve the following parametric optimization problem to compute its response action:
\begin{equation} \label{eq:expert_problem}
    \min_{x \in \mathbb{X}(\hat{s})} F(\hat{s},x),
\end{equation}
where $\mathbb{X}(\hat{s})$ is the expert's known constraint set, $F: \mathbb{S} \times \mathbb{X} \to \mathbb{R}$ is the expert's unknown cost function, where we define $\mathbb{X} \coloneqq \bigcup_{\hat{s} \in \mathbb{S}} \mathbb{X}(\hat{s})$. In our IO formulation, the signal space $\mathbb{S}$ may contain any information that the expert uses to solve the optimization problem \eqref{eq:expert_problem}. For example, in the context of routing problems, the signal may contain the demands of customers, time windows for the service of customers, the set of customers that need to be served, time of the day, day of the week, weather information, etc. Since it would not be practical to formally (i.e., mathematically) define a signal space that contains all possible types of signals, we leave it as a general signal space $\mathbb{S}$. The expert's decision $\hat{x}$ is chosen from the set of optimizers of \eqref{eq:expert_problem}, i.e., $\hat{x} \in \argmin_{x \in \mathbb{X}(\hat{s})} F(\hat{s},x)$. Assume we have access to $N$ pairs of exogenous signals and respective expert optimal decisions $\{(\hat{s}^{[i]}, \hat{x}^{[i]})\}_{i=1}^N$, that is,
\begin{equation*}
    \hat{x}^{[i]} \in \argmin_{x \in \mathbb{X}(\hat{s}^{[i]})} F(\hat{s}^{[i]},x) \quad \forall i \in [N],
\end{equation*}
where we use the hat notation ``$\hat{\cdot}$'' to indicate signal-response data (e.g., $\hat{s}$ and $\hat{x}$). When using a dataset of signal-response data, we use the superscript ``$^{[i]}$'' to refer to the $i$'th pair of the dataset, e.g., $\hat{s}^{[i]}$ and $\hat{x}^{[i]}$. Using this data, our goal is to learn a cost function that, when optimized for the same exogenous signal, (approximately) reproduces the expert's actions. For a more detailed discussion on the formalization of IO problems, please refer to \cite{mohajerin2018data, chan2023inverse, zattoniscroccaro2023learning}.

\subsection{Affine hypothesis class}
\label{sec:hypothesis}

Since we can only search for cost functions in a restricted function space and given our focus on routing problems, in this work we consider cost functions in an \textit{affine hypothesis space with a nonnegative cost vector}
\begin{equation}
\label{eq:hypothesis_class}
    \mathcal{H}_\theta \coloneqq \left\{\inner{\theta}{x} + h(\hat{s},x): \theta \geq 0 \right\},
\end{equation}
where $\theta \in \mathbb{R}^p$ is the cost vector that parametrizes the cost function, and the affine term $h: \mathbb{S} \times \mathbb{X} \to \mathbb{R}$ is a function that can be used to model terms in the hypothesis function $\inner{\theta}{x} + h(\hat{s},x)$ that do not depend on $\theta$. This affine function class generalizes the standard linear hypotheses common in the literature of IO and is a key component of our IO methodology to achieve state-of-the-art results in real-world problems (Section \ref{sec:IO_amazon}). Moreover, we consider nonnegative cost vectors because, for routing problems, they represent the weights of the edges of a graph. For instance, given a complete graph with $n$ nodes, common cost functions to routing problems are the \textit{two-index} or \textit{three-index} formulations
$$
\inner{\theta}{x} = \sum_{i=1}^n\sum_{j=1}^n \theta_{ij} x_{ij} \quad \text{and} \quad \inner{\theta}{x} = \sum_{i=1}^n\sum_{j=1}^n\sum_{k=1}^K \theta_{ij} x_{ijk},
$$
where $x_{ij}$ and $x_{ijk}$ are binary variables equal to $1$ if the edge connecting node $i$ to node $j$ is used in the route, and $0$ otherwise (for the three-index formulation, we have an extra index $k$ specifying which of the $K$ available vehicles uses the edge) \cite{toth2002vehicle}. Moreover, we could also have an affine term, for instance, $h(\hat{s},x) =  \sum_{i=1}^n\sum_{j=1}^n M_{ij}(\hat{s}) x_{ij}$ in the cost function, where the term $M_{ij}(\hat{s}) \in \mathbb{R}$ can be used to encode some behavior we would like to enforce in the model. In summary, our goal with IO is to learn a cost vector $\theta$ such that when solving the \textit{Forward Optimization Problem} (FOP)
\begin{equation}
\label{eq:FOP}
    \text{FOP}(\theta, \hat{s}) \coloneqq \arg\min_{x \in \mathbb{X}(\hat{s})} \big\{ \inner{\theta}{x} + h(\hat{s},x) \big\},
\end{equation}
we can reproduce (or approximate) the response the expert would have taken when solving the unknown optimization problem \eqref{eq:expert_problem}, given the same signal $\hat{s}$.

\subsection{Tailored loss function}
\label{sec:loss_func}

Given a signal-response dataset $\{(\hat{s}^{[i]}, \hat{x}^{[i]})\}_{i=1}^N$, in this work we propose to solve the IO problem (i.e., find a parameter vector $\theta$) by solving a loss minimization problem:
\begin{equation}
\label{eq:loss_minimization}
    \min_{\theta \geq 0} \ \frac{1}{N}\sum_{i=1}^N \ell_\theta (\hat{s}^{[i]}, \hat{x}^{[i]}),
\end{equation}
where $\ell_\theta : \mathbb{S} \times \mathbb{X} \to \mathbb{R}$ is the \textit{loss function}. Using the affine hypothesis class \eqref{eq:hypothesis_class}, we propose the following loss function
\begin{equation}
    \label{eq:loss_function}
    \ell_\theta(\hat{s},\hat{x}) \coloneqq \inner{\theta}{\hat{x}} + h(\hat{s},\hat{x}) - \min_{x \in \mathbb{X}(\hat{s})} \big\{ \inner{\theta + 2\hat{x}- \mathbbm{1}}{x} + h(\hat{s},x) - \inner{\mathbbm{1}}{\hat{x}} \big\},
\end{equation}
where $\mathbbm{1} \in \mathbb{R}^p$ is the all-ones vector. The loss function \eqref{eq:loss_function} is an extension of the Augmented Suboptimality Loss (ASL) proposed in \cite{zattoniscroccaro2023learning}, differing from the ASL in two ways: (i) it uses the affine hypothesis class introduced in section \ref{sec:hypothesis}, which allows us to effectively use it for a wider range of practical problems (e.g., the Amazon Challenge), and (ii) its inner minimization problem has a convex objective function w.r.t. to $x$ (assuming $h$ is convex in $x$), in contrast to the case for the ASL, which is nonconvex general. Having an inner minimization problem with convex cost makes its use much more practical since the inner optimization problem has to be solved to evaluate or compute gradients of \eqref{eq:loss_function}. For example, when using first-order methods to optimize it, the inner minimization problem must be solved at each iteration of the algorithm (e.g., see Algorithm \ref{alg:first_order}). This ``nonconvex to convex'' reformulation is possible by exploiting the fact that routing problems can be modeled using binary decision variables (e.g., $x_{ij} = 1$ if the edge connecting nodes $i$ and $j$ is used, and $x_{ij} = 0$ otherwise). This reformulation is formalized in Proposition \ref{prop:ASL_reformulation}.
\begin{proposition}[Connection between the ASL and \eqref{eq:loss_function}]
\label{prop:ASL_reformulation}
    Assume $\mathbb{X} \subseteq \{0, 1\}^p$, that is, the decision variables of the FOP are binary. Then, the loss function \eqref{eq:loss_function} is equivalent to the ASL $\ell^{\emph{ASL}}_\theta(\hat{s},\hat{x}) = \inner{\theta}{\phi(\hat{s},\hat{x})} - \min_{x \in \mathbb{X}(\hat{s})} \big\{ \inner{\theta}{\phi(\hat{s},x)} - d(\hat{x},x) \big\}$, if the linear hypothesis $\inner{\theta}{\phi(\hat{s},\hat{x})}$ is substituted by the affine hypothesis $\inner{\theta}{\hat{x}} + h(\hat{s},\hat{x})$, and the distance function $d(\hat{x},x) = \| \hat{x} - x \|_1$.
\end{proposition}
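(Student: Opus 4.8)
The plan is to reduce the claim to a single elementary identity that is valid precisely because the feasible sets consist of binary vectors. First I would record the consequences of the hypothesis $\mathbb{X} \subseteq \{0,1\}^p$: since $\hat{x}^{[i]} \in \mathbb{X}(\hat{s}^{[i]}) \subseteq \mathbb{X}$, the data point $\hat{x}$ is binary, and since the inner minimization in both the ASL and in \eqref{eq:loss_function} ranges over $x \in \mathbb{X}(\hat{s}) \subseteq \{0,1\}^p$, the inner decision variable $x$ is binary as well. So throughout we may assume $\hat{x}, x \in \{0,1\}^p$.

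The key step is the observation that for scalars $a,b \in \{0,1\}$ we have $|a-b| = a + b - 2ab$, hence, summing over coordinates, for any $\hat{x}, x \in \{0,1\}^p$,
\begin{equation*}
    d(\hat{x},x) = \norm{\hat{x}-x}_1 = \sum_{j=1}^p \big(\hat{x}_j + x_j - 2\hat{x}_j x_j\big) = \inner{\mathbbm{1}}{\hat{x}} + \inner{\mathbbm{1} - 2\hat{x}}{x}.
\end{equation*}
In other words, on binary vectors the $\ell_1$ distance is an \emph{affine} function of $x$ for fixed $\hat{x}$: its $x$-independent part is $\inner{\mathbbm{1}}{\hat{x}}$ and its linear part has coefficient vector $\mathbbm{1} - 2\hat{x}$.

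Next I would substitute this expression into the ASL after replacing the linear hypothesis by the affine one, i.e. into $\inner{\theta}{\hat{x}} + h(\hat{s},\hat{x}) - \min_{x \in \mathbb{X}(\hat{s})} \{ \inner{\theta}{x} + h(\hat{s},x) - d(\hat{x},x) \}$, and regroup the terms appearing inside the minimization. The term $\inner{\theta}{x}$ and the term $-\inner{\mathbbm{1}-2\hat{x}}{x} = \inner{2\hat{x}-\mathbbm{1}}{x}$ combine into $\inner{\theta + 2\hat{x} - \mathbbm{1}}{x}$; the constant $-\inner{\mathbbm{1}}{\hat{x}}$ is kept inside the braces (it does not change the minimizer but does change the value); and $h(\hat{s},x)$ is carried along unchanged. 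This produces exactly the right-hand side of \eqref{eq:loss_function}, establishing the claimed equivalence of the two loss functions as functions of $(\hat{s},\hat{x})$ for every fixed $\theta \geq 0$.

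There is no genuine obstacle in this argument; the points that merely require care are (a) making explicit that the binarity assumption $\mathbb{X}\subseteq\{0,1\}^p$ is exactly what licenses rewriting $\norm{\hat{x}-x}_1$ in the affine form above — the identity is false for general real vectors — and (b) tracking the purely $\hat{x}$-dependent scalar $\inner{\mathbbm{1}}{\hat{x}}$ so that the reformulation is displayed in precisely the form written in \eqref{eq:loss_function}, even though this scalar is irrelevant to the inner $\arg\min$.
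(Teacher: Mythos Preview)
Your proposal is correct and follows essentially the same route as the paper: both hinge on the identity $|a-b|=a+b-2ab$ for $a,b\in\{0,1\}$ (the paper writes it equivalently as $(1-a)b+(1-b)a$) to rewrite $\norm{\hat{x}-x}_1=\inner{\mathbbm{1}}{\hat{x}}+\inner{\mathbbm{1}-2\hat{x}}{x}$ and then substitute into the affine-hypothesis ASL. Your version is, if anything, a bit more explicit in justifying why $\hat{x}$ and $x$ are binary and in tracking the constant $\inner{\mathbbm{1}}{\hat{x}}$, but there is no substantive difference in approach.
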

\begin{proof}
    The ASL with the linear hypothesis substituted by the affine hypothesis and $d(\hat{x},x) = \| \hat{x} - x \|_1$ is equal to $\inner{\theta}{\hat{x}} + h(\hat{s},\hat{x}) - \min_{x \in \mathbb{X}(\hat{s})} \big\{ \inner{\theta}{x} + h(\hat{s},x) - \| \hat{x} - x \|_1 \big\}$. Next, notice that for binary variables $a,b \in \{0,1\}$, we have the identity $|a - b| = (1-a)b + (1-b)a$. Thus, for two binary vectors $\hat{x},x \in \{0, 1\}^p$, using the definition of the $\ell_1$-norm, we have that $\|\hat{x} - x\|_1 = \inner{\mathbbm{1} - 2\hat{x}}{x} + \inner{\mathbbm{1}}{\hat{x}}$.
\end{proof}

\subsection{First-order algorithm}
\label{sec:algorithm}

In this work, we propose to solve problem \eqref{eq:loss_minimization} using a stochastic first-order algorithm. In particular, our algorithm tailors and extends the SAMD algorithm from \cite{zattoniscroccaro2023learning}, as it uses update steps tailored to the proposed loss function \eqref{eq:loss_function} with a nonnegative cost vector. Moreover, it exploits the finite sum structure of the problem (i.e., the sum over the $N$ examples) by using a single training example per iteration of the algorithm. To do so, we propose a reshuffled sampling strategy, which empirically outperforms the uniform sampling strategy of the SAMD algorithm (see results in sections \ref{sec:IO_for_VRPTW} and \ref{sec:IO_amazon}).

Before presenting our algorithm, we discuss how to compute subgradients of the loss function \eqref{eq:loss_function}, which is necessary to use first-order methods to minimize it. To this end, we define
\begin{equation} 
\label{eq:AFOP}
    \text{A-FOP}(\theta, \hat{s}, \hat{x}) \coloneqq \arg\min_{x \in \mathbb{X}(\hat{s})} \big\{ \inner{\theta + 2\hat{x} - \mathbbm{1}}{x} + h(\hat{s},x) \big\},
\end{equation}
that is, the set of optimizers of the FOP with \textit{augmented} edge weights $\theta + 2\hat{x} - \mathbbm{1}$ instead of $\theta$. Being able to solve the augmented FOP \eqref{eq:AFOP} is important because to compute a subgradient of the loss \eqref{eq:loss_function} (and thus, a subgradient of \eqref{eq:loss_minimization}), we need to compute an element of $\text{A-FOP}(\theta, \hat{s}, \hat{x})$, which follows from Danskin's theorem \cite[Section B.5]{bertsekas2008nonlinear}. Here we emphasize an important consequence of the reformulation that led to our tailored loss function: assuming the affine term is linear in $x$, say $h(\hat{s},x) = \inner{M(\hat{s})}{x}$, solving the A-FOP has the same complexity as solving the FOP. For example, if the FOP is a TSP with edge weights $\theta$, then the A-FOP is also a TSP, but with augmented edge weights $\theta + 2\hat{x} - \mathbbm{1} + M(\hat{s})$. This is of particular practical interest since the same solver can be used both for \textit{learning} the model (i.e., for the A-FOP, which we must be able to solve to evaluate \eqref{eq:loss_function}, thus, to solve \eqref{eq:loss_minimization}) and for \textit{using} the model (i.e., for the FOP).

\begin{algorithm}
\setstretch{1.2}
\caption{Reshuffled stochastic first-order algorithm}
\label{alg:first_order}
\begin{algorithmic}[1]
\State \textbf{Input:} Step-size sequence $\{ \eta_t \}_{t=1}^T$, initial point $\theta_1^{[1]} \geq 0$, number of epochs $T$, and dataset $\{(\hat{s}^{[i]}, \hat{x}^{[i]})\}_{i=1}^N$
\For{$t=1, \ldots, T$}
\State Sample $\{\pi_1, \ldots, \pi_N\}$, a permutation of $\{1, \ldots, N\}$
\For{$i=1, \ldots, N$}
\State $x^\star \in \text{A-FOP}\left(\theta_t^{[i]}, \hat{s}^{[\pi_i]}, \hat{x}^{[\pi_i]}\right)$
\State $g = \hat{x}^{[\pi_i]} - x^\star$
\State
\begin{align*}
    \theta_t^{[i+1]} =
    \begin{cases}
    \theta_t^{[i]} \odot \exp(-\eta_t g) \quad & \text{(exponentiated update)} \\
    \text{OR} \\
    \max \left\{ 0, \ \theta_t^{[i]} - \eta_t g \right\} \quad & \text{(standard update)}
    \end{cases}  
\end{align*}
\EndFor
\State $\theta_{t+1}^{[1]} = \theta_{t}^{[N+1]}$
\EndFor
\State \textbf{Output:} $\left\{ \theta_t^{[N+1]} \right\}_{t=1}^T$
\end{algorithmic}
\end{algorithm}

Algorithm \ref{alg:first_order} shows our \textit{reshuffled stochastic first-order algorithm} to solve Problem \eqref{eq:loss_minimization} using the loss function \eqref{eq:loss_function}. The algorithm runs for $T$ epochs. The number of epochs $T$ should be viewed as an input parameter of Algorithm \ref{alg:first_order}. In practice, one can choose $T$ to be as large as possible and then monitor the performance of the learned model after each epoch of the algorithm. This way, the algorithm is evaluated for all epochs up until $T$, and we can choose the model with the best performance. Alternatively, we can use Algorithm \ref{alg:first_order} without a predefined number of epochs $T$ and run it until a stopping criterion is reached. In practice, this could be implemented by running the algorithm until the difference in the test dataset performance (or any other performance metric) of the models from epochs $t$ and $t+1$ is smaller than a minimum value. For instance, in Figure \ref{fig:dataset_size}, we can see that between epochs $T=4$ and $T=5$, the Amazon score of the learned models does not change much, thus, we could use it to stop the algorithm. In our numerical experiments, we chose a predefined  $T$ and monitored the performance of the model after each epoch. At the beginning of each epoch, we sample a permutation of $[N]$ (line 3), which simply means that we shuffle the order of the examples in the dataset. This is known as \textit{random reshuffling}, as has been shown to perform better in practice compared to standard uniform stochastic sampling \cite{mishchenko2020random}. Moreover, since our random reshuffling strategy uses only one example per update step, it is particularly efficient for problems with large datasets. Next, for each epoch, we perform one update step for each example in the dataset. In particular, in line 5 we compute one element of A-FOP, and in line 6 we compute a subgradient of the loss function \eqref{eq:loss_function}. For the update step (line 7), we offer two possibilities: (i) \textit{exponentiated updates}, which are inspired by the exponentiated subgradient algorithm of \cite{kivinen1997exponentiated} and are specialized for optimization problems with nonnegative variables, and (ii) \textit{standard updates}, which can be interpreted as using the standard projected subgradient method, projecting onto the nonnegative cone. In practice, the question of what update step is the best should be answered on a case-by-case basis. An important component of our algorithm (and of first-order algorithms in general) is the step size $\eta_t$. Common choices are $\eta_t = c/\sqrt{t}$, $\eta_t = c/t$, or $\eta_t = c$, for some fixed constant $c > 0$. Finally, to turn the output of the algorithm $\big\{ \theta_t^{[N+1]} \big\}_{t=1}^T$ into a single cost vector, we can use standard methods such as $\theta = \theta_T^{[N+1]}$ (last iterate), $\theta = \frac{1}{T} \sum_{t=1}^T \theta_t^{[N+1]}$ (average) or $\theta = \frac{2}{T(T+1)}\sum_{t=1}^T t \theta_t^{[N+1]}$ (weighted average) \cite{lacoste2012simpler}.

\begin{remark}[Approximate A-FOP]
\label{remark:approximate_A-FOP}
    Notice that an element of A-FOP needs to be computed at each iteration of Algorithm \ref{alg:first_order} (line 5). However, if the A-FOP is a hard combinatorial problem (e.g., a large TSP or VRPTW), it may not be computationally feasible to solve it to optimality multiple times. Thus, in practice, one may use an \emph{approximate A-FOP}, that is, in line 5 of Algorithm \ref{alg:first_order} we compute an approximate solution to the augmented FOP instead of an optimal one. Fortunately, an approximate solution of the A-FOP can be used to construct an \emph{approximate subgradient} \citep[Example 3.3.1]{bertsekas2015convex}, which in turn can be used to compute an approximate solution of Problem \eqref{eq:loss_minimization} \cite{zattoniscroccaro2023learning}. In practice, using approximate solvers may lead to a much faster learning algorithm, in exchange for a possibly worse learned model. This trade-off is explored in the numerical results of Section \ref{sec:complexity}.
\end{remark}

%%%%%%%%%%%%%%%%%%%%%%%%%%%%%%%%%%%%%%%%%%%%%%%%%%%%%%%%%%%%%%%%%%%%%

\section{Modeling Examples}
\label{sec:modelling}

Next, we present three examples of how our IO methodology can be used for learning from routing data. Namely, we first exemplify how a CVRP scenario can be modeled with our IO methodology, and present a simple numerical example to illustrate the intuition behind how Algorithm \ref{alg:first_order} works. Second, we show how a larger VRPTW scenario can be modeled with our IO methodology, and present numerical results using data generated from real-world instances. Third, we define a class of TSPs, which will later be used to formalize the Amazon Challenge as an IO problem.

\subsection{IO for CVRPs}
\label{sec:IO_for_CVRP}

We define the $K$-vehicle Symmetric Capacitated Vehicle Routing Problem (SCVRP) as
\begin{equation}
\label{eq:SCVRP}
\begin{aligned}
    \min_{x_e \in \{0,1\}} \quad & \sum_{e \in E} w_e x_e, \\
    \text{s.t.} \quad & \sum_{e \in \delta(i)} x_e = 2  & \forall i \in V \setminus \{0\} \\
     & \sum_{e \in \delta(0)} x_e = 2K \\
     & \sum_{e \in \delta(S)} x_e \geq 2 r(S, D, c) \quad & \forall S \subset V \setminus \{0\}, \hspace{1mm} S \neq \emptyset,
\end{aligned} 
\end{equation}
where $\mathcal{G} = (V, E, W)$ is an edge-weighted graph, with node set $V$ (node $0$ being the depot), undirected edges $E$, and edge weights $W$. For this problem, each node $i \in V$ represents a customer with demand $d_i \in D$. There are $K$ vehicles, each with a capacity of $c$. Given a set $S \subset V$, let $\delta(S)$ denote the set of edges that have only one endpoint in S. Moreover, given a set $S \subset V \setminus \{0\}$, we denote by $r(S, D, c)$ the minimum number of vehicles with capacity $c$ needed to serve the demands of all customers in $S$. The $x_e$'s are binary variables equal to $1$ if the edge $e \in E$ is used in the solution, and equal to $0$ otherwise, and $w_e \in W$ is the weight of edge $e \in E$ \cite{toth2002vehicle}.

\begin{figure}
    \centering
    \captionsetup[subfigure]{width=0.96\linewidth}%
    \begin{subfigure}[t]{.4\linewidth}
        \includegraphics[width = \linewidth]{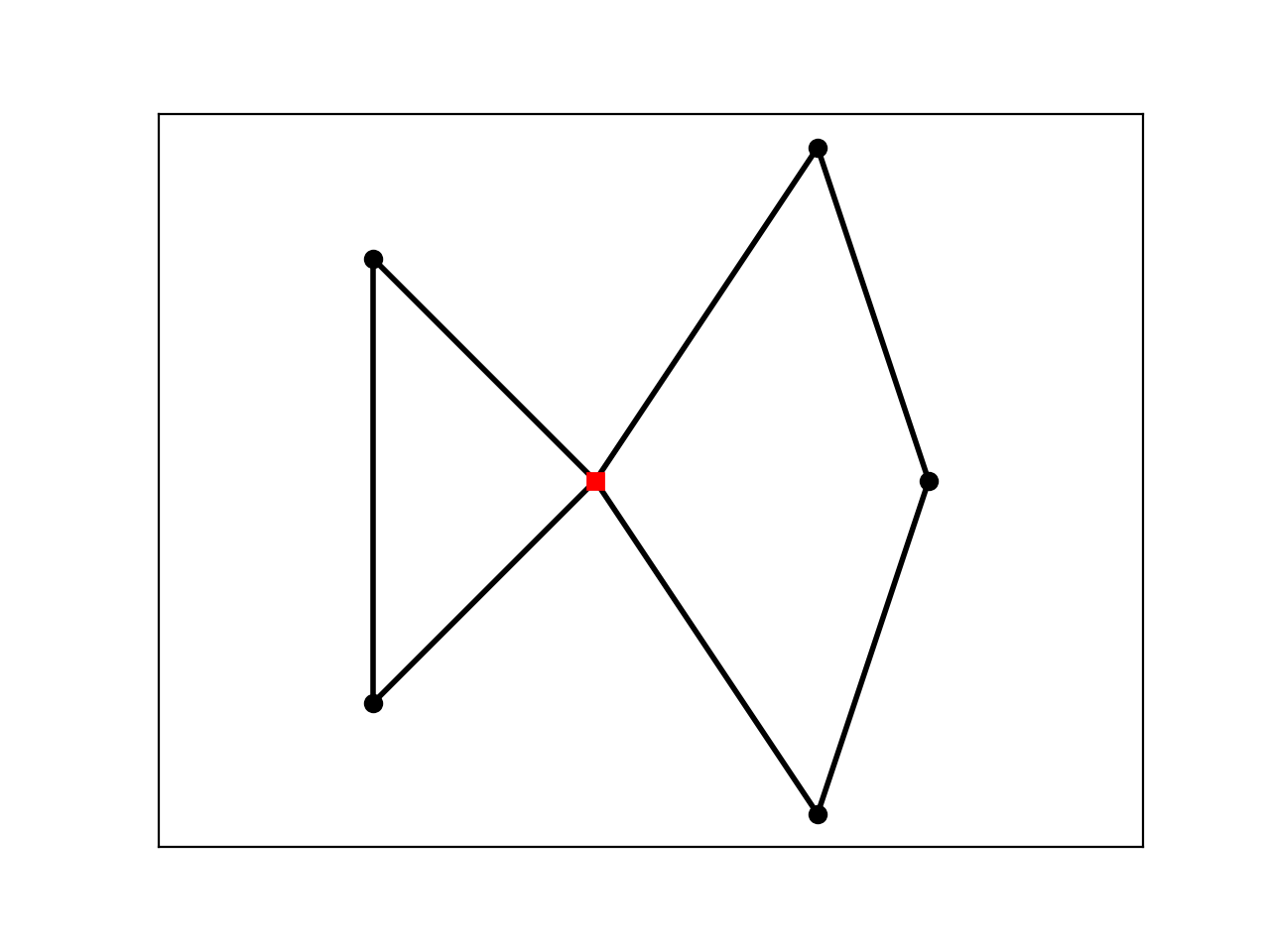}
        \caption{Optimal SCVRP routes using weights $w_e$ based on Euclidean distances.}
        \label{fig:true_weights_routes}
    \end{subfigure}
    \begin{subfigure}[t]{.4\linewidth}
        \includegraphics[width = \linewidth]{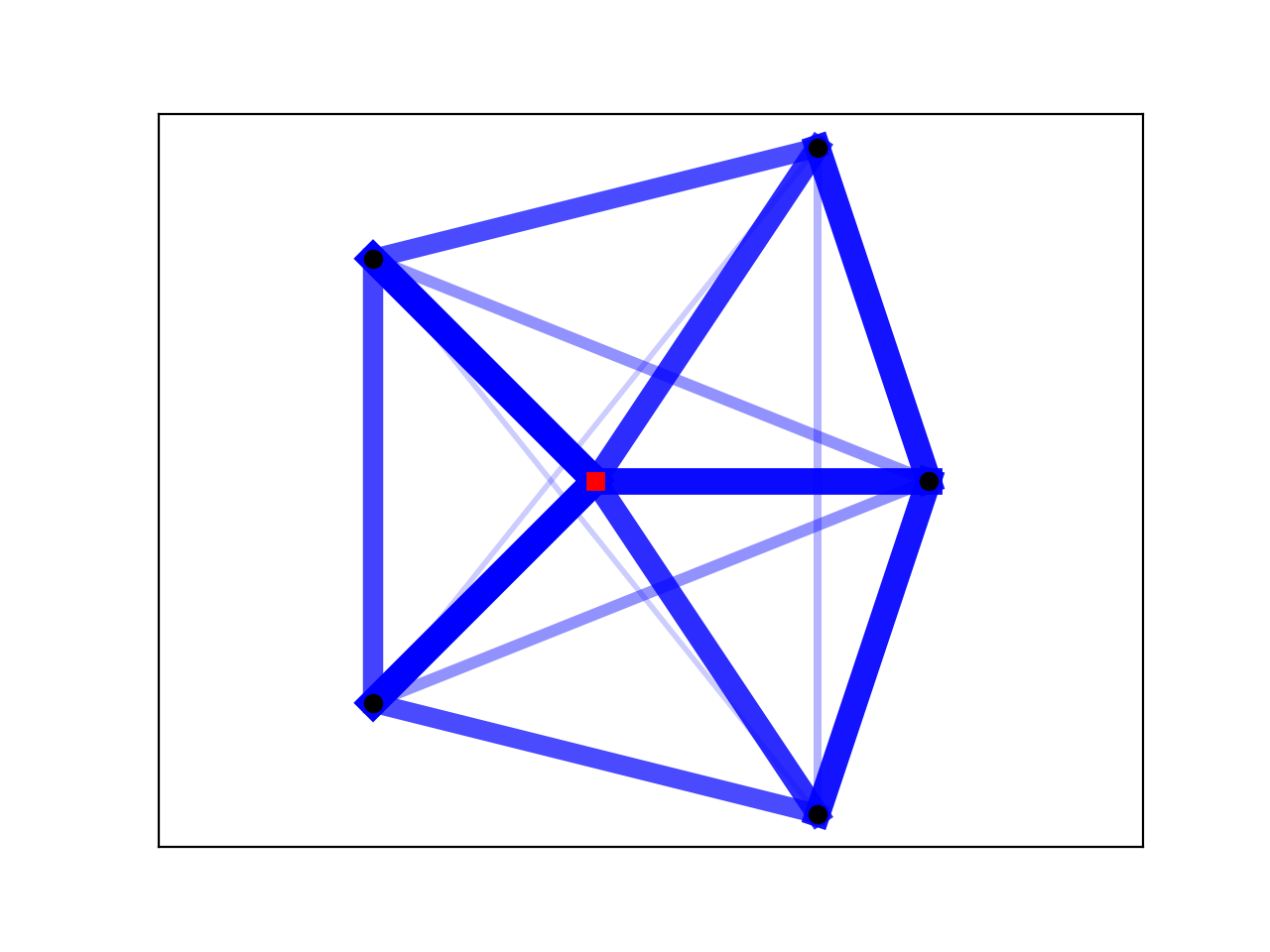}
        \caption{Representation of the weights of each edge of the graph, where the smaller the weight, the thicker and darker the edge.}
        \label{fig:true_weights}
    \end{subfigure}
\caption{Optimal SCVRP tour and representation of graph weights.}
\label{fig:weights_and_routes}
\end{figure}

\begin{figure}
    \centering
    \captionsetup[subfigure]{width=0.96\linewidth}%
    \begin{subfigure}[t]{.32\linewidth}
        \includegraphics[width = \linewidth]{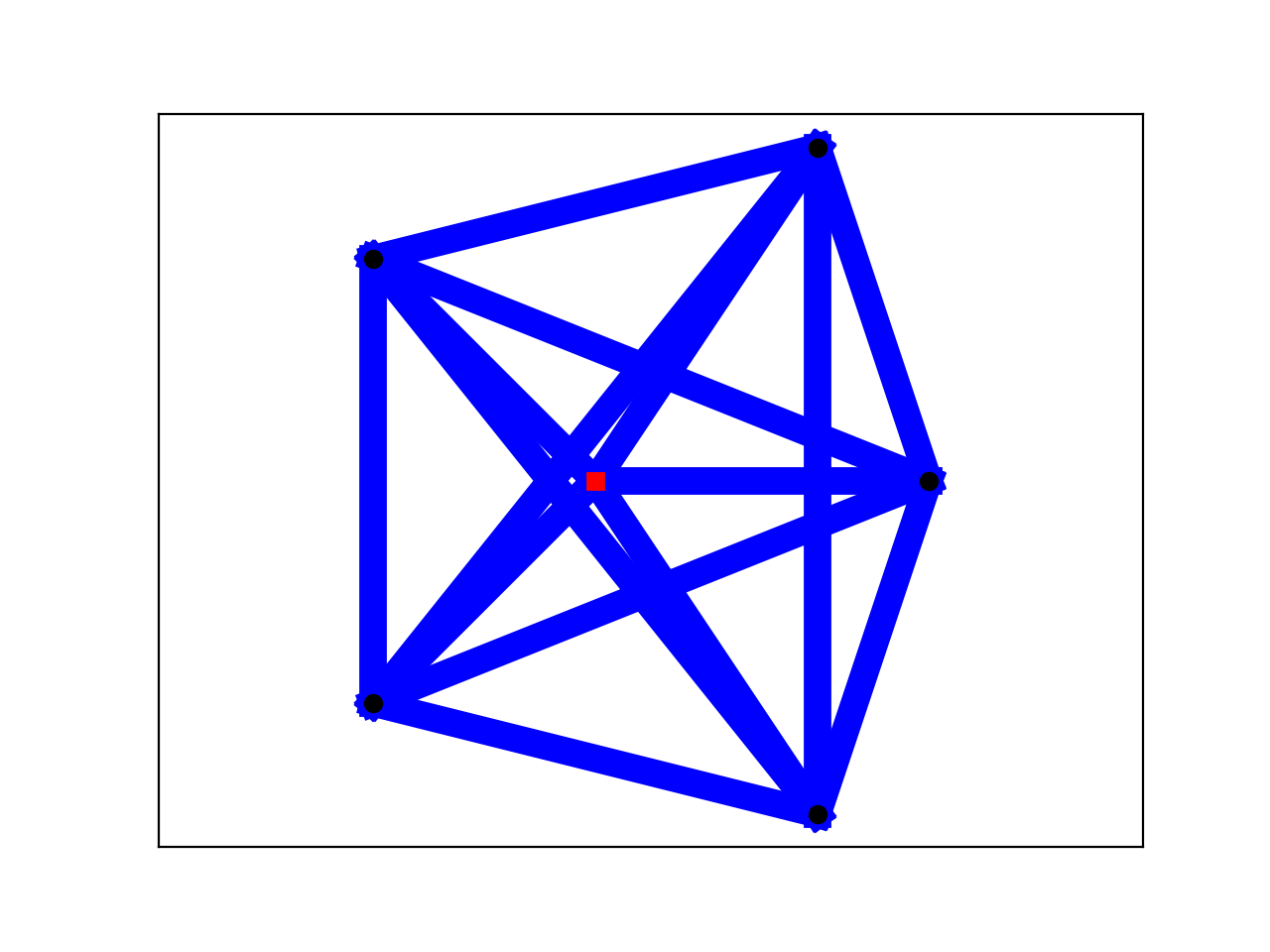}
        \caption{$\theta_1$.}
        \label{fig:theta_1}
    \end{subfigure}
    \begin{subfigure}[t]{.32\linewidth}
        \includegraphics[width = \linewidth]{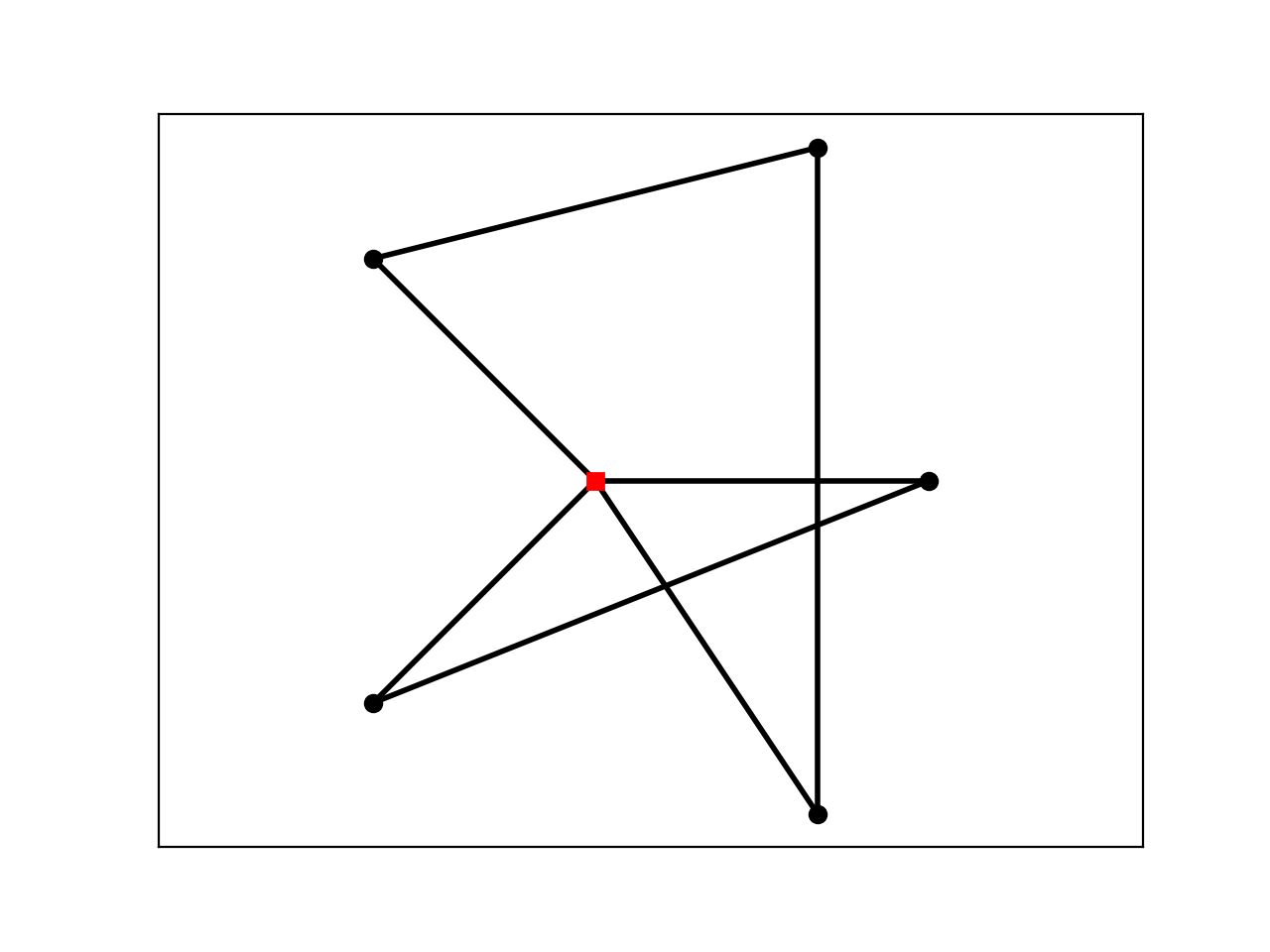}
        \caption{Optimal SCVRP routes using the weights $\theta_1$.}
        \label{fig:theta_1_routes}
    \end{subfigure}
    \begin{subfigure}[t]{.32\linewidth}
        \includegraphics[width = \linewidth]{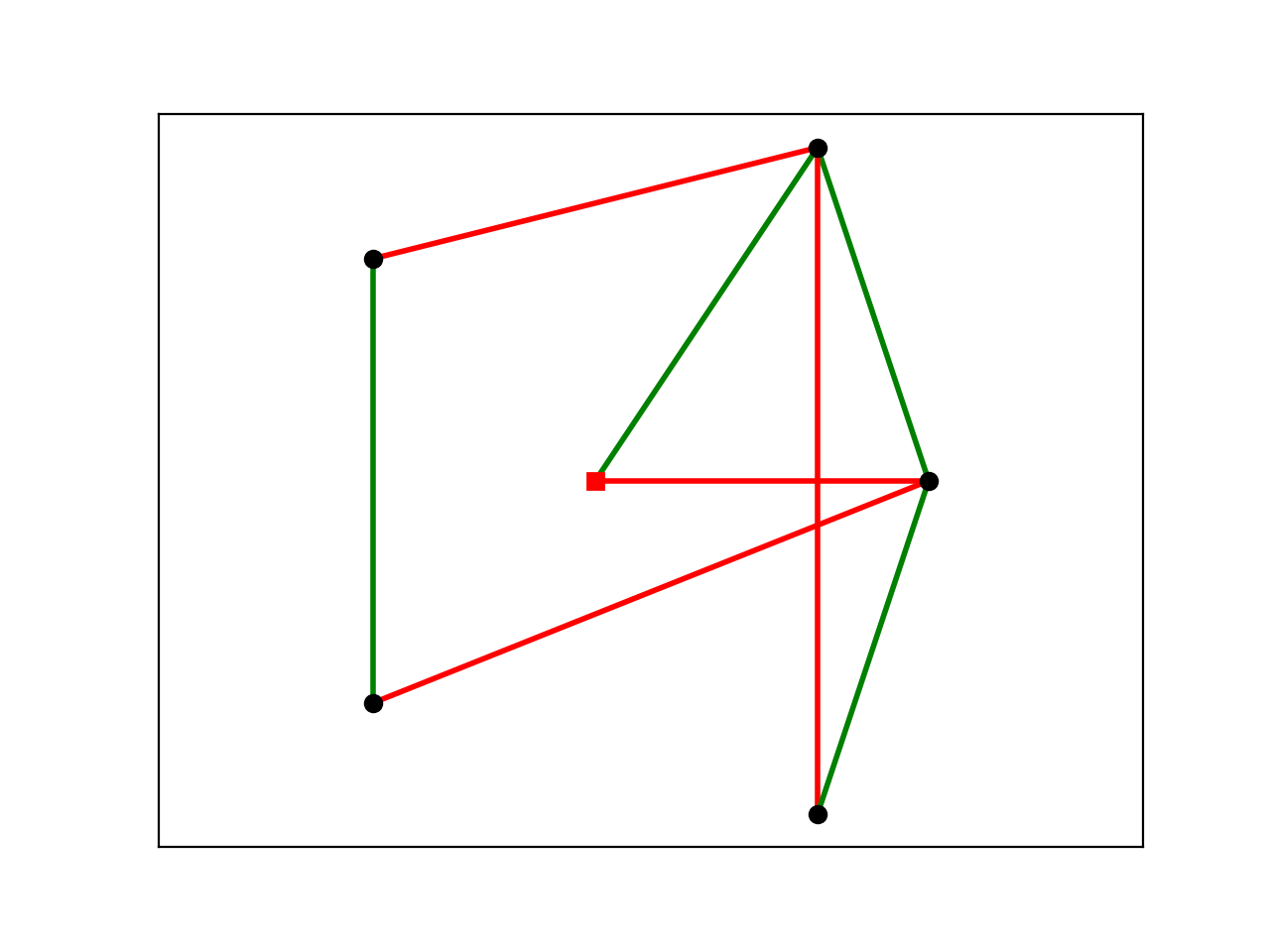}
        \caption{Difference between true optimal route (Figure \ref{fig:weights_and_routes}) and learned route (Figure \ref{fig:theta_1_routes}).}
        \label{fig:grad_1}
    \end{subfigure} \\
    \begin{subfigure}[t]{.32\linewidth}
        \includegraphics[width = \linewidth]{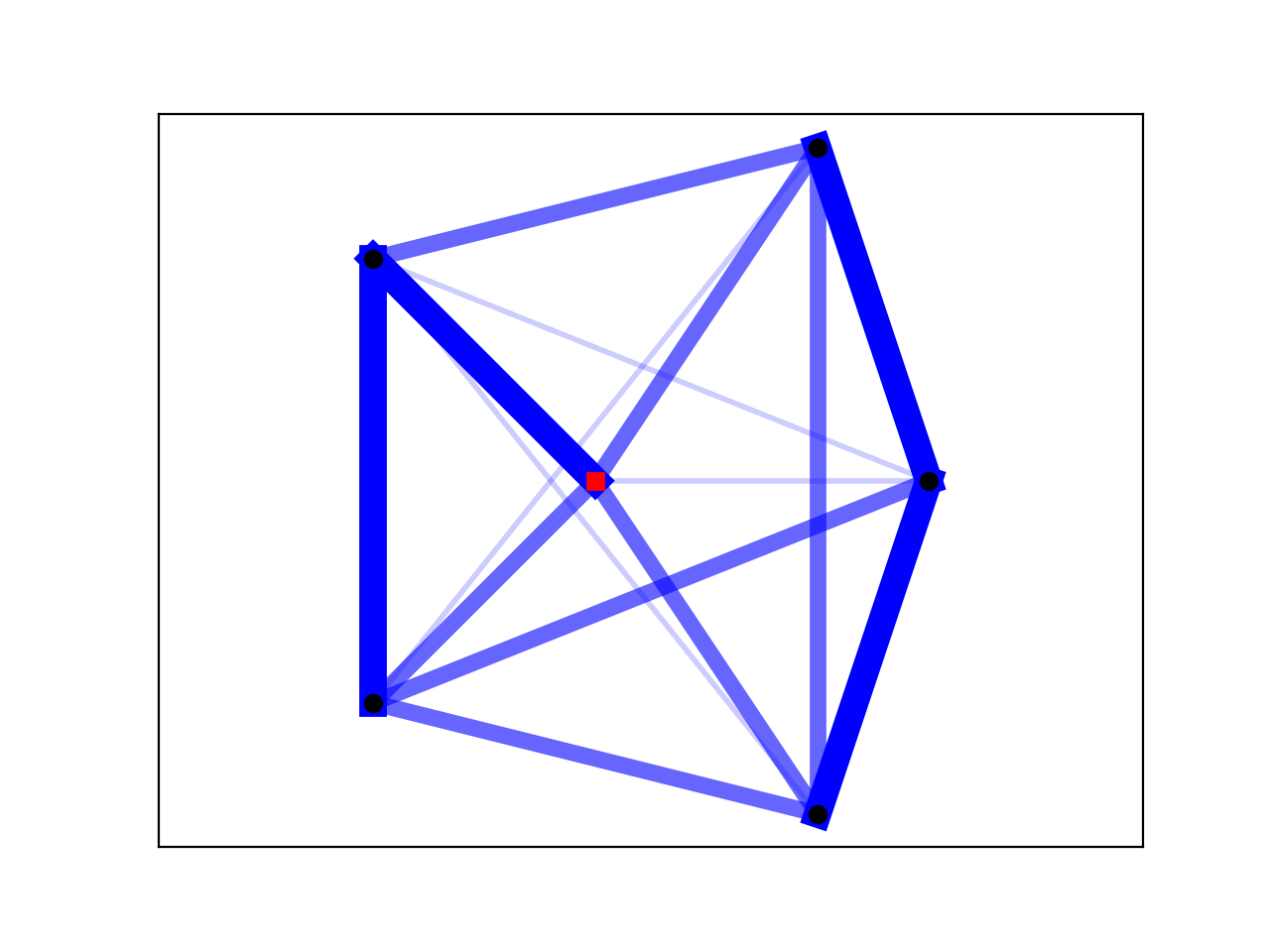}
        \caption{$\theta_2$.}
        \label{fig:theta_2}
    \end{subfigure}
    \begin{subfigure}[t]{.32\linewidth}
        \includegraphics[width = \linewidth]{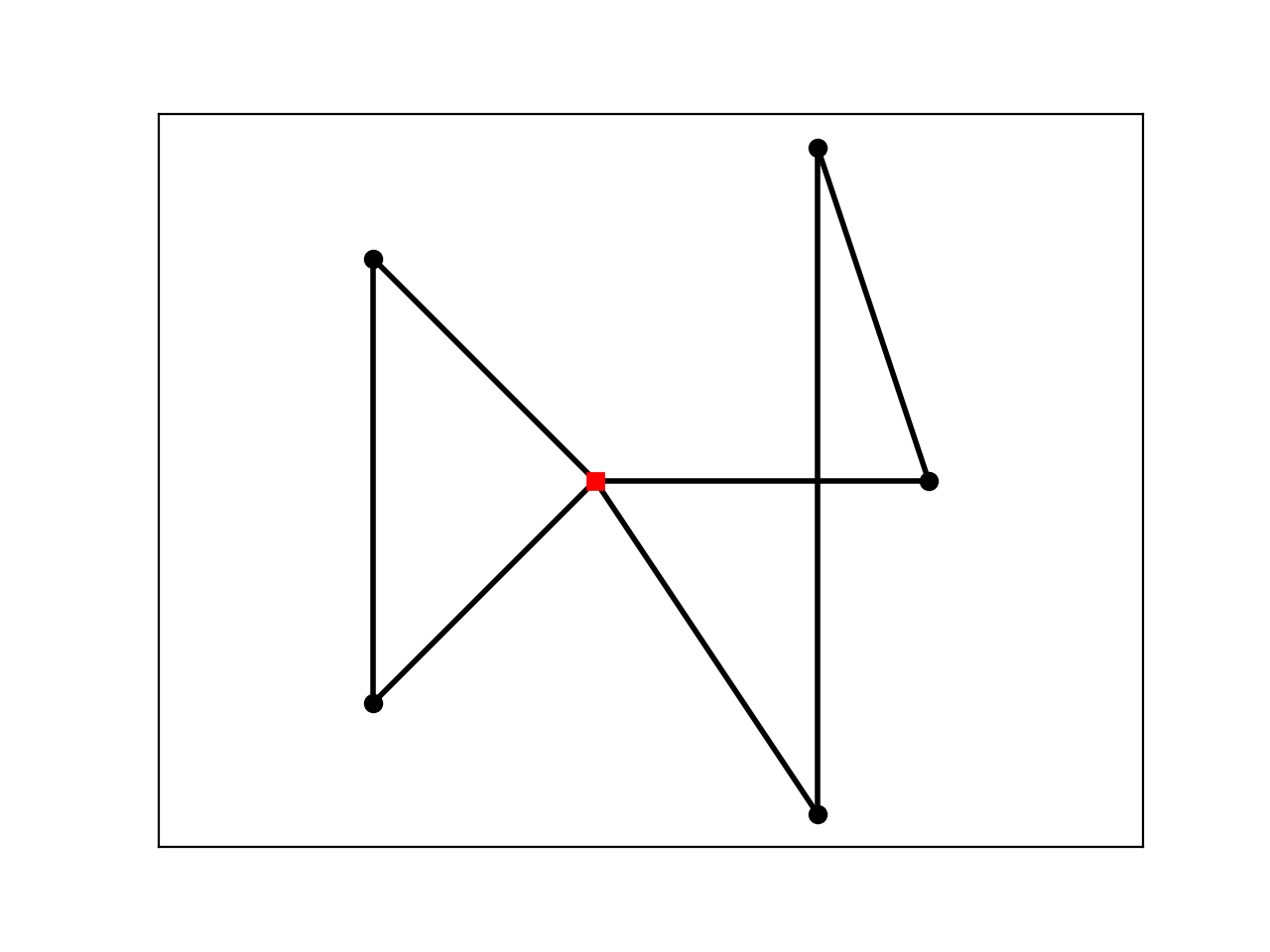}
        \caption{Optimal SCVRP routes using the weights $\theta_2$.}
        \label{fig:theta_2_routes}
    \end{subfigure}
    \begin{subfigure}[t]{.32\linewidth}
        \includegraphics[width = \linewidth]{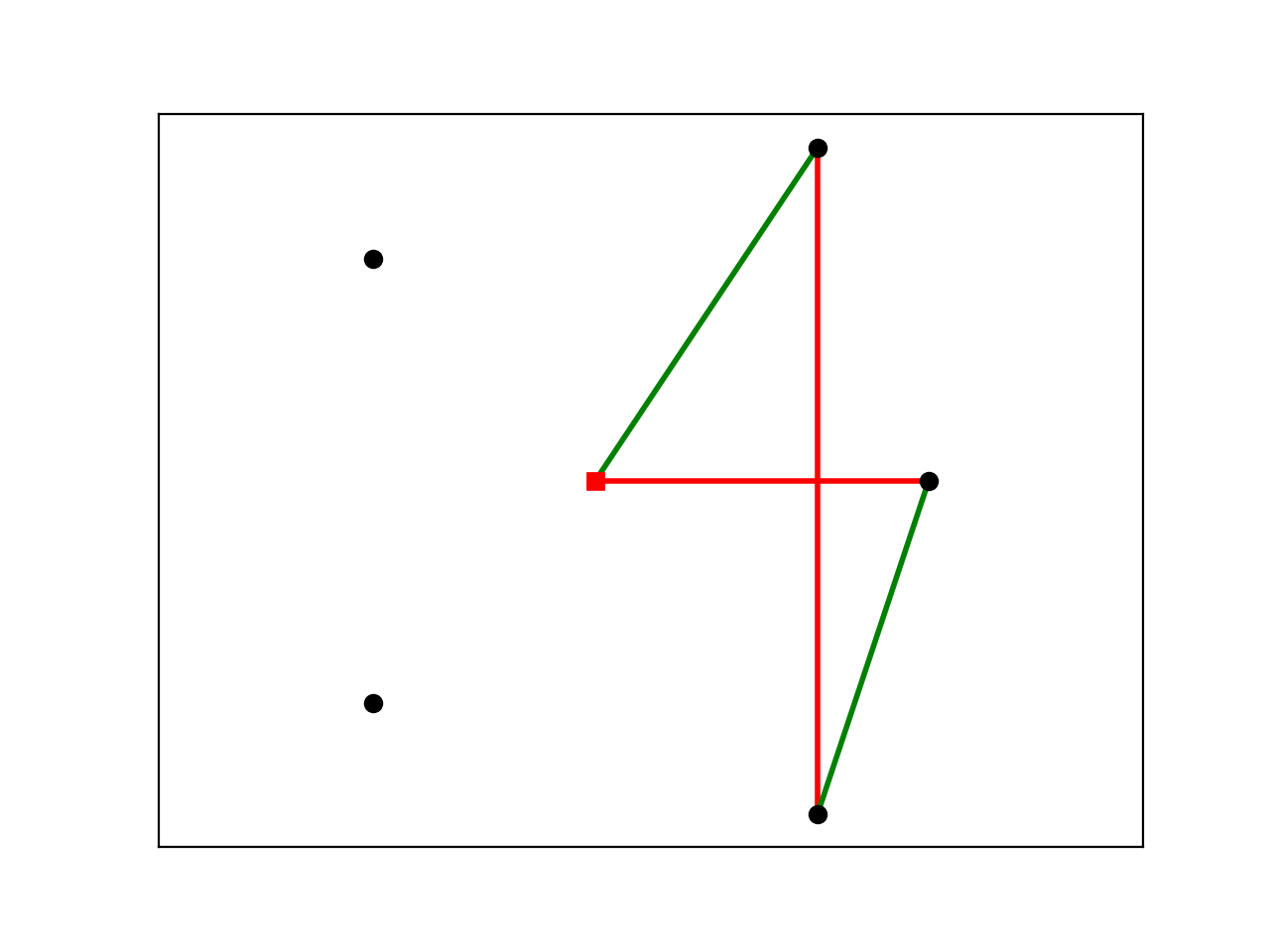}
        \caption{Difference between true optimal route (Figure \ref{fig:weights_and_routes}) and learned route (Figure \ref{fig:theta_2_routes}).}
        \label{fig:grad_2}
    \end{subfigure} \\
    \begin{subfigure}[t]{.32\linewidth}
        \includegraphics[width = \linewidth]{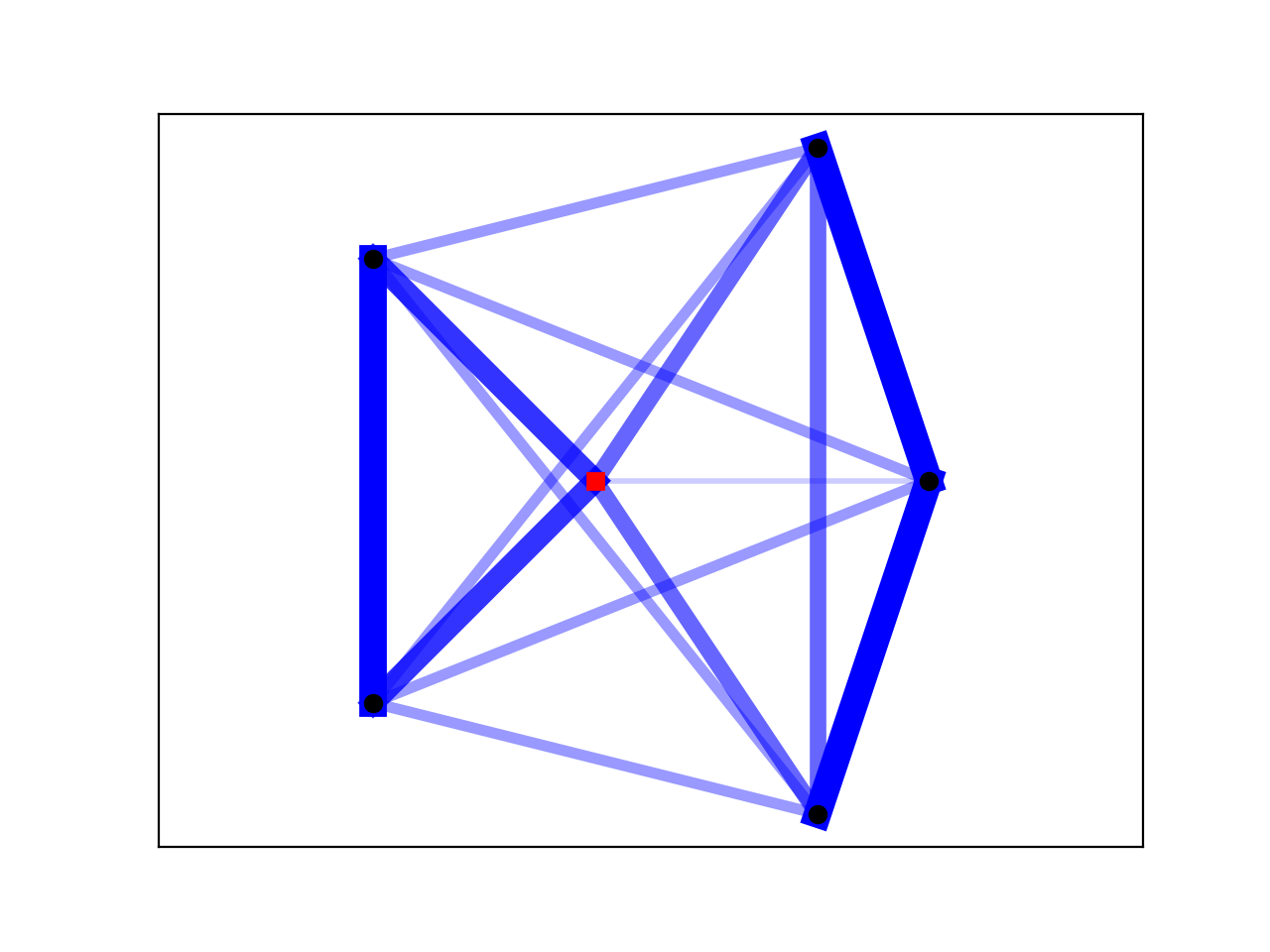}
        \caption{$\theta_3$.}
        \label{fig:theta_3}
    \end{subfigure}
    \begin{subfigure}[t]{.32\linewidth}
        \includegraphics[width = \linewidth]{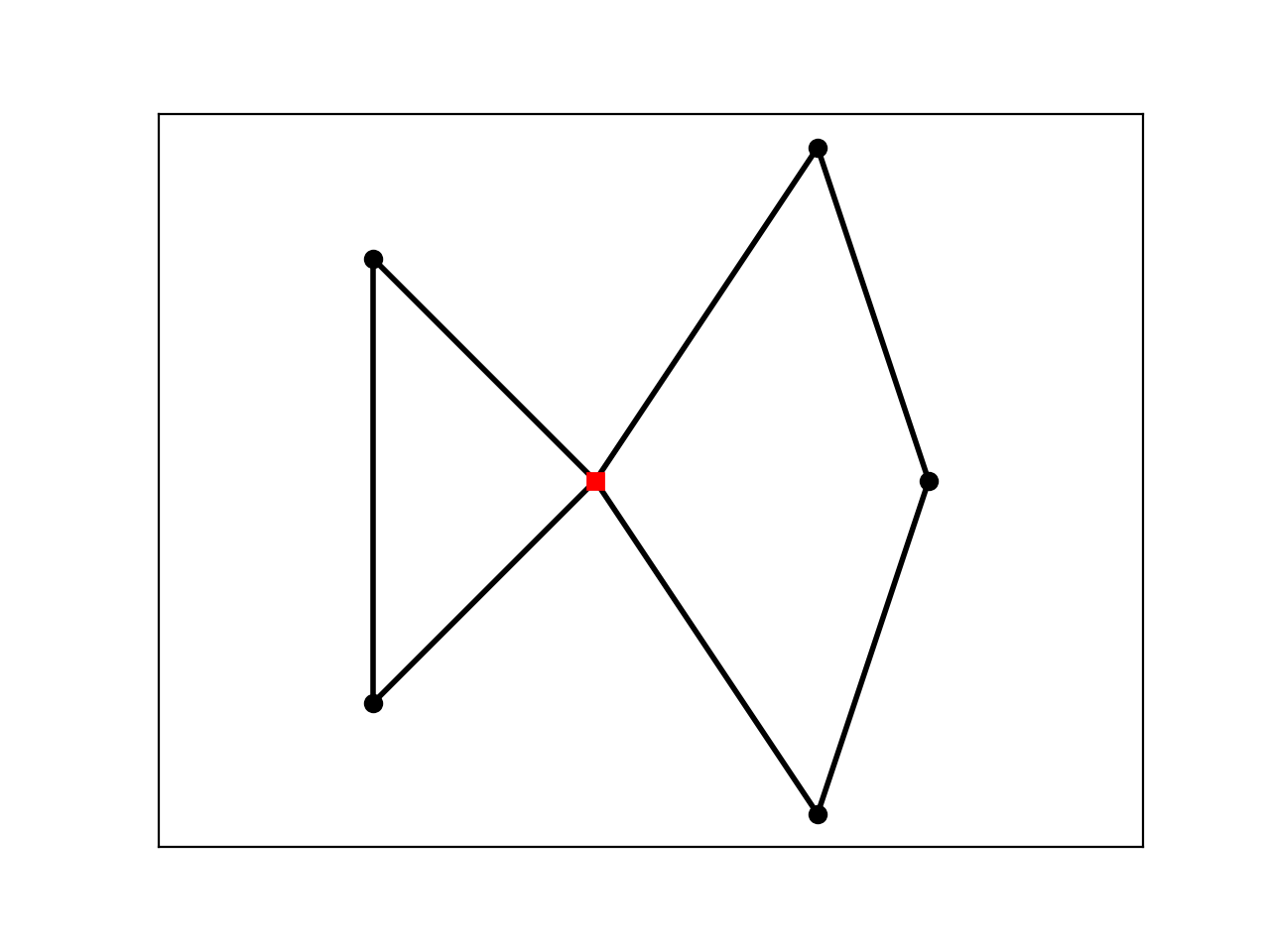}
        \caption{Optimal SCVRP routes using the weights $\theta_3$.}
        \label{fig:theta_3_routes}
    \end{subfigure}
    \begin{subfigure}[t]{.32\linewidth}
        \includegraphics[width = \linewidth]{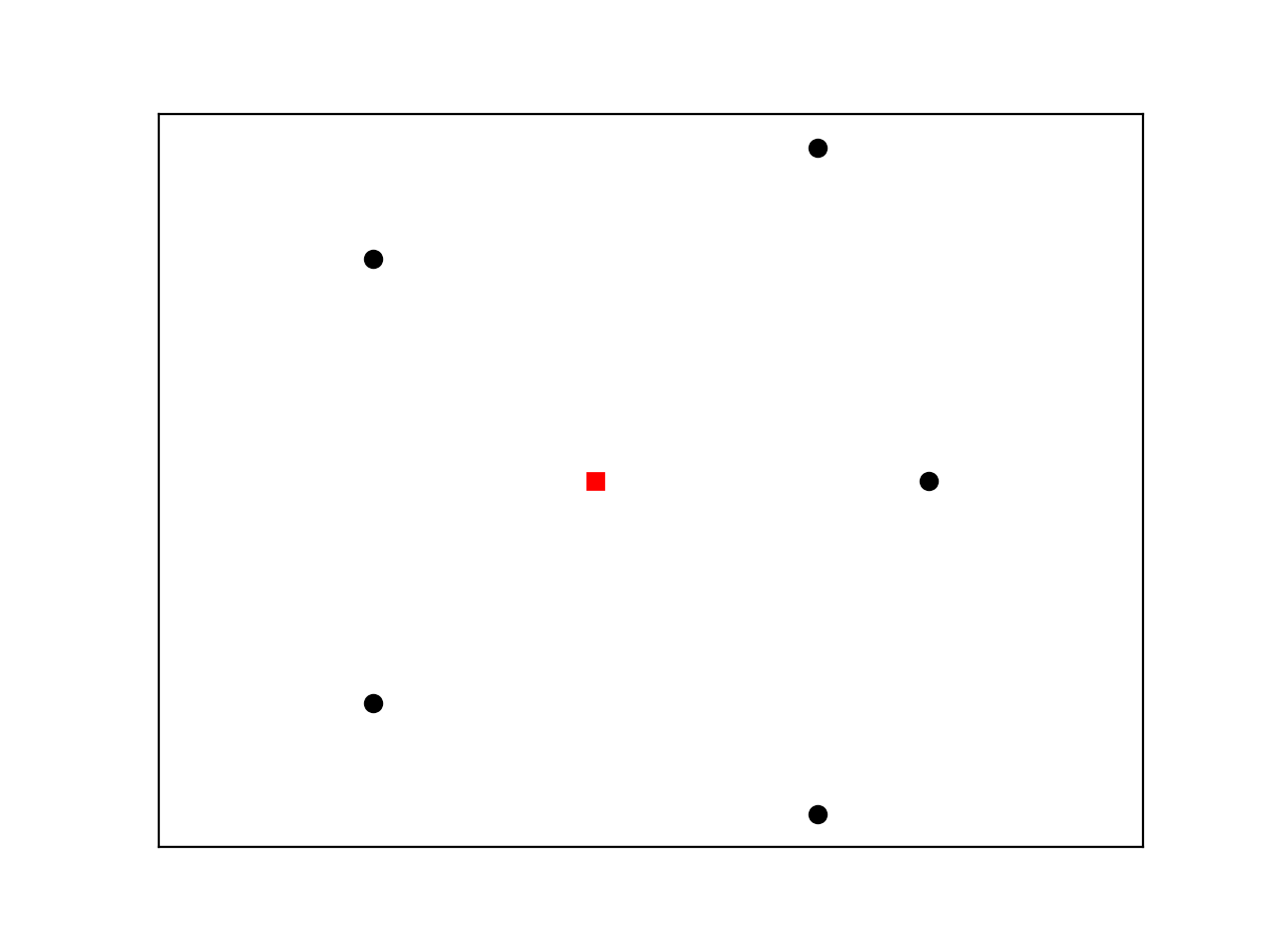}
        \caption{Difference between true optimal route (Figure \ref{fig:weights_and_routes}) and learned route (Figure \ref{fig:theta_3_routes}).}
        \label{fig:grad_3}
    \end{subfigure}    
    \caption{Two iterations of Algorithm \ref{alg:first_order}. The figures in the first column represent the learned weights, the figures in the second column are optimal SCVRP routes for the respective weights shown in the first column, and the third column represents the subgradient in line 6 of Algorithm \ref{alg:first_order}, where red (green) edges represent weights that should be increased (decreased).}
    \label{fig:OI_for_SCVRP}
\end{figure}

Next, we show how to use IO to learn edge weights that can be used to replicate the behavior of an expert, given a dataset of example routes. Consider the signal $\hat{s} \coloneqq D$, where $D$ is a set of demands of the customers, and the response $\hat{x} \in \{0, 1\}^{|E|}$, which is the vector with components $x_e$ encoding the optimal solution of the Problem \eqref{eq:SCVRP} for the signal $\hat{s}$. Defining the linear hypothesis function (i.e., $h(\hat{s},x) = 0$)
\begin{equation}
    \label{eq:SCVRP_hypothesis}
    \inner{\theta}{x} \coloneqq \sum_{e \in E} \theta_e x_e,
\end{equation}
and the constraint set
\begin{equation}
    \label{eq:SCVRP_set}
    \mathbb{X}(\hat{s}) \coloneqq
    \left\{x \in \{0,1\}^{|E|} :
    \begin{aligned}
        & \sum_{e \in \delta(i)} x_e = 2  \quad & \forall i \in V \setminus \{0\}, \\
        & \sum_{e \in \delta(0)} x_e = 2K, \\
        & \sum_{e \in \delta(S)} x_e \geq 2 r(S, D, c) \quad & \forall S \subset V \setminus \{0\}, \hspace{1mm} S \neq \emptyset
    \end{aligned}
    \right\},
\end{equation}
we can interpret the signal-response pair $(\hat{s}, \hat{x})$ as coming from an expert agent, which given the signal $\hat{s}$ of demands, solves the SCVRP to compute its response $\hat{x}$. Thus, to learn a cost function (i.e., learn a vector of edge weights) that replicates the SCVRP route $\hat{x}$, we can use Algorithm \ref{alg:first_order} to solve Problem \eqref{eq:loss_minimization} with hypothesis \eqref{eq:SCVRP_hypothesis} and constraint set \eqref{eq:SCVRP_set}. 

To illustrate how Algorithm \ref{alg:first_order} works to learn edge weights in routing problems on graphs, consider a simple SCVRP with $K=2$ vehicles, each with capacity $c=3$, and 5 customers, each customer $i$ with demand $d_i=1$. In this example, for simplicity, we use $w_e$ equal to the Euclidean distance between the customers, however, any other set of weights could be used instead. We create one training example using these weights. Figure \ref{fig:true_weights_routes} shows the location of the customers (black dots), the depot (red square), and the optimal SCVRP routes using weights $w_e$. Figure \ref{fig:true_weights} shows a representation of the weights of each edge of the graph, where the smaller the weight, the thicker and darker the edge. We use Algorithm \ref{alg:first_order} with exponentiated updates, $\eta_t = 0.0002$, and we initialize $\theta_1$ with the same weight for all edges. In Figure \ref{fig:OI_for_SCVRP}, we graphically show two iterations of the algorithm for this problem. In the first column, we show the evolution of the learned weights $\theta_t$. In the second column, we show optimal SCVRP routes computed using the weights in the first column (i.e., computed by solving the A-FOP in line 5 of Algorithm \ref{alg:first_order}), and in the third column, we show the difference between the optimal routes using the true weights (Figure \ref{fig:true_weights_routes}) and the optimal routes using the current learned weights (the route in the second column). The difference between these two routes is the subgradient computed in line 6 of Algorithm \ref{alg:first_order}, which is used to update the learned weights $\theta_t$. For the subgradient representation in the third column, red edges represent a negative subgradient (i.e., edges with weights that should be increased) and green edges represent a positive subgradient (i.e., edges with weights that should be decreased). This is the main intuition behind Algorithm \ref{alg:first_order}: at each iteration, we compare the route we want to replicate with the one we get with the current edge weights. Then, comparing which edges are used in these two routes, we either increase or decrease their respective weights, thus ``pushing'' the optimal route using the learned weights to be closer to the route we want to replicate. In the example shown in Figure \ref{fig:OI_for_SCVRP}, we can see that after two iterations of the Algorithm \ref{alg:first_order}, the optimal route using the learned weights coincides with the example route.

\subsection{IO for VRPTWs}
\label{sec:IO_for_VRPTW}

Consider the Vehicle Routing Problem with Time Windows (VRPTW)
\begin{equation}
\label{eq:VRPTW}
\begin{aligned}
    \min_{x_{ijk} \in \{0,1\}} \quad & \sum_{i=1}^n\sum_{j=1}^n\sum_{k=1}^K w_{ij} x_{ijk} \\
    \text{s.t.} \quad & x \in \mathbb{X}(\hat{s}),
\end{aligned} 
\end{equation}
where $n$ is the number of customers, $K$ is the maximum number of vehicles available, $x_{ijk}$ is a binary variable equal to $1$ if the edge from node $i$ to node $j$ is traversed by vehicle $k$ in the solution, and $0$ otherwise, and $w_{ijk}$ is the weight of the edge connecting node $i$ to node $j$. In the constraint set of program \eqref{eq:VRPTW}, $x$ is the vector containing the variables $x_{ijk}$, the signal $\hat{s}$ is defined to be the list of time windows (one for each customer) that need to be respected, and $\mathbb{X}(\hat{s})$ is the set of feasible solutions for the VRPTW for time windows in $\hat{s}$. Notice that the set $\mathbb{X}(\hat{s})$ may depend on other parameters of the problem, such as the service time of each customer, the demands of each customer, the travel time between customers, etc. However, we make the constraint set explicitly dependent only on the time windows since this is the only external parameter that will change in this example. More details on the different formulations for the constraint set of VRPTWs can be found in \cite{toth2002vehicle}.

Next, we show how one can model the VRPTW into our IO framework. Consider the dataset $\{(\hat{s}^{[i]}, \hat{x}^{[i]})\}_{i=1}^N$, where the signal $\hat{s}^{[i]}$ is the list of time windows that need to be respected and the response $\hat{x}^{[i]} \in \{0,1\}^{n^2K}$ is the respective optimal VRPTW routes (i.e., a vector with components $x_{ijk}$). Defining the linear hypothesis function
\begin{equation}
    \label{eq:VRPTW_hypothesis}
    \inner{\theta}{x} \coloneqq \sum_{i=1}^n\sum_{j=1}^n\sum_{k=1}^K \theta_{ij} x_{ijk},
\end{equation}
we can interpret this dataset as coming from an expert agent, which given the signal $\hat{s}^{[i]}$, solves a VRPTW to compute its response $\hat{x}^{[i]}$. Thus, to learn a cost function (i.e., learn a vector of edge weights) that replicates the VRPTW route $\hat{x}$, we can use Algorithm \ref{alg:first_order} to solve Problem \eqref{eq:loss_minimization} with hypothesis \eqref{eq:VRPTW_hypothesis} and the constraint set of \eqref{eq:VRPTW}.

To illustrate this formulation, we will use a VRPTW scenario generated using data from the \textit{EURO Meets NeurIPS 2022 Vehicle Routing Competition} \cite{ortec2022euro}. The VRPTWs considered in this competition are real-world instances provided by the company ORTEC. To generate the training data to test our IO formulation, we pick one instance from the competition, which corresponds to a relatively large VRPTW with $n = 200$ customers and $K = 15$ available vehicles. Originally, each customer in this VRPTW instance had fixed time windows. However, to generate an IO dataset, we shuffled the original time windows among the 200 customers and computed the optimal VRPTW routes for each of these new instances. Thus, we generate a dataset $\{(\hat{s}^{[i]}, \hat{x}^{[i]})\}_{i=1}^N$, where the signal $\hat{s}^{[i]}$ is a random assignment of time-windows to customers, and the response $\hat{x}^{[i]}$ is the respective optimal VRPTW solution. Using the state-of-the-art solver PyVRP \cite{wouda2024PyVRP}, we generated $N = 50$ training and test instances. All these instances have the same true edge weights $w_{ij}$, which corresponds to the non-euclidean real-world road driving time from customer $i$ to customer $j$. Thus, our IO goal is to learn a set of weights $\theta_{ij}$ that replicate the routes using $w_{ij}$ as well as possible, given the provided dataset of signal-response training examples. We learn the weights using the training dataset and evaluate its performance using a test dataset. Moreover, we report the average performance value for 5 randomly generated training/test datasets, as well as the 5th and 95th percentile bounds. We test three approaches to solve the IO problem, where we set the initial weights in $\theta_1$ equal to the Euclidean distance between customers $i$ and $j$.
\begin{itemize}
    \item \textbf{Cutting plane:} We use the cutting plane algorithm from \cite{wang2009cutting} to solve
    \begin{equation*}
        \begin{aligned}
        \min_{\theta \geq 0} \quad & \|\theta - \theta_1 \|_1 \\
        \text{s.t.} \quad & \hat{x}_i \in \text{FOP}(\theta, \hat{s}_i) \quad \forall i \in [N],
        \end{aligned}
    \end{equation*}
    which is the multi-point IO formulation proposed in \cite{bodur2022inverse}.
    \item \textbf{SAMD:} We use the SAMD algorithm from \cite{zattoniscroccaro2023learning} to solve \eqref{eq:loss_minimization}, with exponentiated updates and $\eta_t = 0.3/t$.
    \item \textbf{Algorithm 1:} We use Algorithm \ref{alg:first_order} to solve \eqref{eq:loss_minimization}, with exponentiated updates and $\eta_t = 0.3/t$. For this example, the difference between the SAMD algorithm from \cite{zattoniscroccaro2023learning} and Algorithm \ref{alg:first_order} is that the former uses uniform stochastic sampling, while the latter uses the reshuffled sampling strategy.
\end{itemize}

\begin{figure}
\centering
\captionsetup[subfigure]{width=0.96\linewidth}%
    \begin{subfigure}[t]{0.32\linewidth}
        \includegraphics[width = \linewidth]{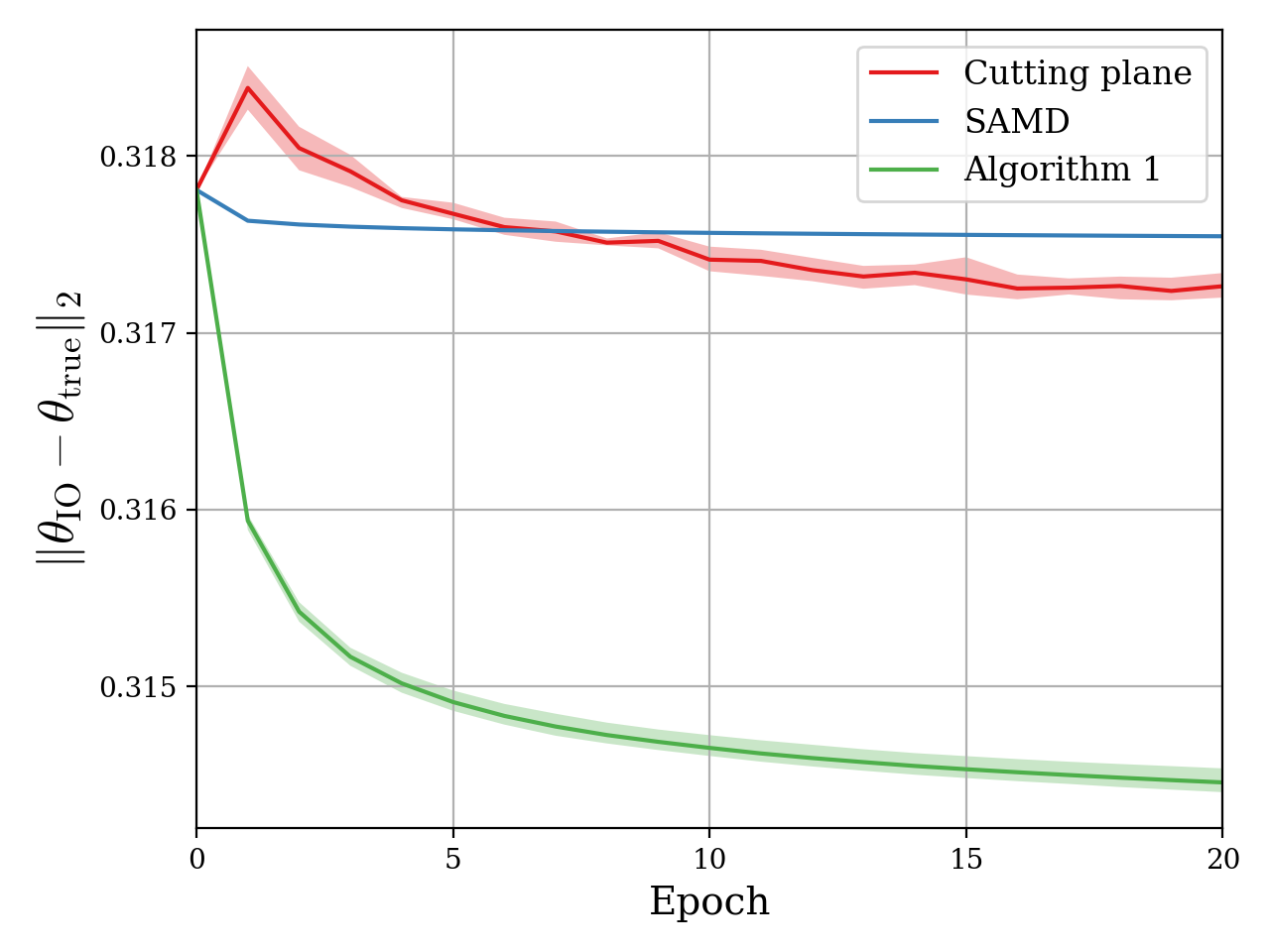}
        \caption{Difference between the true weights ($\theta_{\text{true}}$) and the ones learned using IO ($\theta_{\text{IO}}$).}
        \label{fig:vrptw_theta_diff}
    \end{subfigure}
    \begin{subfigure}[t]{0.32\linewidth}
        \includegraphics[width = \linewidth]{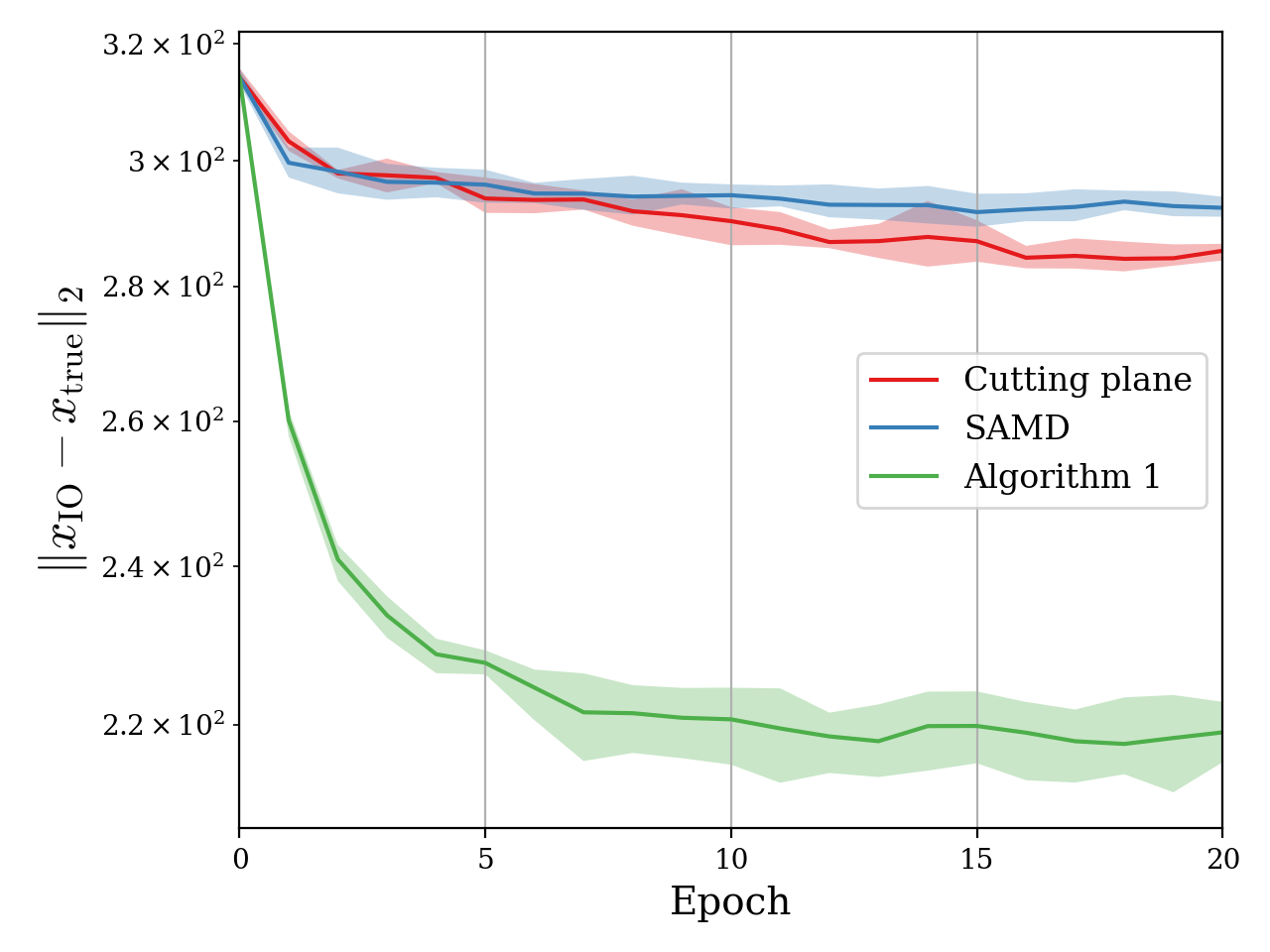}
        \caption{Average error between the routes generated by $\theta_{\text{true}}$ and $\theta_{\text{IO}}$.}
        \label{fig:vrptw_x_diff_out}
    \end{subfigure}
    \begin{subfigure}[t]{0.32\linewidth}
        \includegraphics[width = \linewidth]{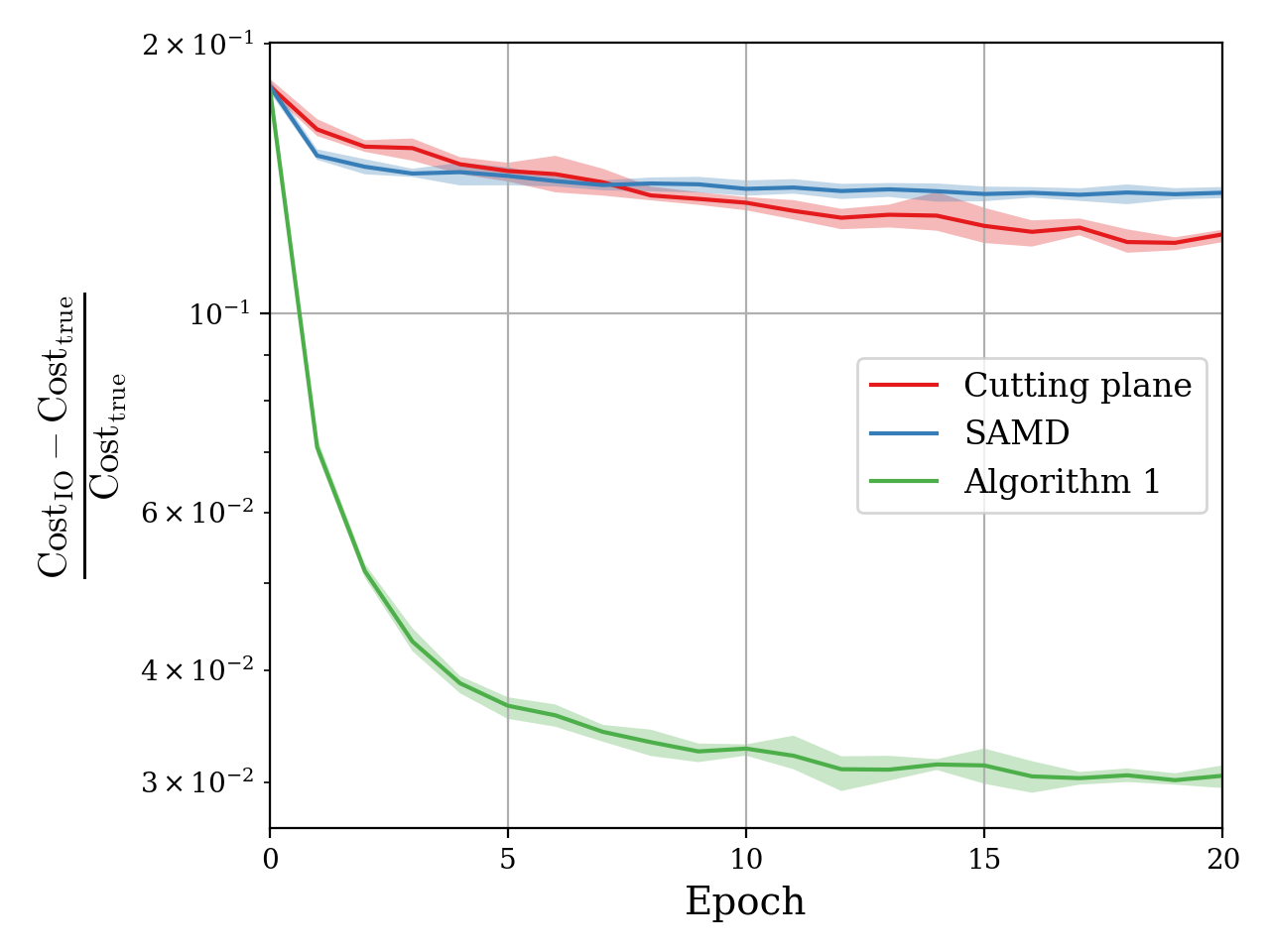}
        \caption{Average normalized cost difference between the routes generated by $\theta_{\text{true}}$ and $\theta_{\text{IO}}$.}
        \label{fig:vrptw_cost_diff_out}
    \end{subfigure}
\caption{Results for the VRPTW scenario.}
\label{fig:vrptw_out}
\end{figure}

Our experiments are reproducible, and the underlying source code is available at \cite{zattoniscroccaro2023invopt}. Figure \ref{fig:vrptw_out} shows the results of this experiment. For all the plots, the x-axis refers to the epoch $t \in [1, T]$, which consists of $N$ iterations of the method used to solve the problem. Figure \ref{fig:vrptw_theta_diff} shows the normalized difference between the vector of weights returned by the IO approach (which we name $\theta_{\text{IO}}$) and the vector of weights used to generate the data (which we name $\theta_{\text{true}}$). Figure \ref{fig:vrptw_x_diff_out} shows the average difference between the optimal routes using $\theta_{\text{IO}}$ (which we name $x_{\text{IO}}$) and the routes from the test dataset (which we name $x_{\text{true}}$). Figure \ref{fig:vrptw_cost_diff_out} shows the normalized difference between the cost of the expert decisions and the cost of the decisions using $\theta_{\text{IO}}$. More precisely, we define $\text{Cost}_{\text{IO}} \coloneqq \sum_{i=1}^N \langle \theta_{\text{true}}, x_\text{IO}^{[i]} \rangle$ and $\text{Cost}_{\text{true}} \coloneqq \sum_{i=1}^N \langle \theta_{\text{true}}, x_\text{true}^{[i]} \rangle$ and compare the relative difference between them. Notice that this difference will always be nonnegative by the optimality of $x_\text{true}^{[i]}$. From the results of this experiment, we can see that Algorithm \ref{alg:first_order} outperforms the other approaches (i.e., the cutting plane and SAMD) by a relatively large margin, which shows the efficacy of our proposed reshuffled sampling strategy (i.e., Algorithm \ref{alg:first_order}) for this example.

\subsection{IO for TSPs}
\label{sec:IO_for_TSP}

Let $\mathcal{G} = (V, E, W)$ be a complete edge-weighted directed graph, with node set $V$, directed edges $E$, and edge weights $W$. Next, given $\hat{s} \subset V$ (i.e., a subset of the nodes of $\mathcal{G}$), we define the \textit{Restricted Traveling Salesperson Problem} (R-TSP) as
\begin{equation}
\label{eq:R-TSP}
\begin{aligned}
    \min_{x_{ij}} \quad & \sum_{i \in V} \sum_{j \in V} w_{ij} x_{ij}, \\
    \text{s.t.} \quad & \sum_{j \in \hat{s}} x_{ij} = \sum_{j \in \hat{s}} x_{ji} = 1  \quad & \forall i \in \hat{s} \\
    & \sum_{i \in Q} \sum_{j \in Q} x_{ij}  \leq |Q| -1  \quad & \forall Q \subset \hat{s}, \hspace{1mm} Q \neq \emptyset, \hspace{1mm} \bar{Q} \neq \emptyset \\
    &x_{ij} \in \{0,1\} & \forall (i,j) \in \hat{s} \times \hat{s} \\
    &x_{ij} = 0 & \forall (i,j) \notin \hat{s} \times \hat{s},
    \end{aligned} 
\end{equation}
where $x_{ij}$ is a binary variable equal to $1$ if the edge from node $i$ to node $j$ is used in the solution, and $0$ otherwise, and $w_{ij}$ is the weight of the edge connecting node $i$ to node $j$. Problem \eqref{eq:R-TSP} is based on the standard formulation of a TSP as a binary optimization problem \cite{dantzig1954solution}. The only difference to a standard TSP is that instead of being required to visit all nodes of the graph, for an R-TSP we compute the optimal tour over a subset $\hat{s}$ of the nodes $V$. Notice that the standard TSP can be interpreted as an R-TSP, for the special case when $\hat{s} = V$. In practice, any TSP solver can be used to solve an R-TSP by simply ignoring all nodes of the graph that are not required to be visited.

Next, we show how to use IO to learn edge weights that can be used to replicate the behavior of an expert, given a set of example routes. Consider the dataset $\{(\hat{s}^{[i]}, \hat{x}^{[i]})\}_{i=1}^N$, where the signal $\hat{s}^{[i]} \in V$ is a set of nodes required to be visited and the response $\hat{x}^{[i]} \in \{0,1\}^{|V|^2}$ is the respective optimal R-TSP tour (i.e., a vector with components $x_{ij}$ for $(i,j) \in V \times V$). Defining the affine hypothesis function
\begin{equation}
    \label{eq:TSP_hypothesis}
    \inner{\theta}{x} + h(\hat{s}, x) \coloneqq \sum_{i \in V} \sum_{j \in V} \big(\theta_{ij} + M_{ij} \big) x_{ij},
\end{equation}
and the constraint set
\begin{equation}
    \label{eq:TSP_set}
    \mathbb{X}(\hat{s}) \coloneqq
    \left\{x \in \{0,1\}^{|V|^2} :
    \begin{aligned}
        & \sum_{j \in \hat{s}} x_{ij} = 1, & \forall i \in \hat{s} \\
        & \sum_{i \in \hat{s}} x_{ij} = 1, & \forall j \in \hat{s} \\
        & \sum_{i \in Q} \sum_{j \in Q} x_{ij} \leq |Q| -1, & \forall Q \subset \hat{s}, Q \neq \emptyset, \bar{Q} \neq \emptyset \\
        & x_{ij} = 0 & \forall (i,j) \notin \hat{s} \times \hat{s}
    \end{aligned}
    \right\},
\end{equation}
we can interpret this dataset as coming from an expert agent, which given the signal $\hat{s}^{[i]}$, solves an R-TSP to compute its response $\hat{x}^{[i]}$. For the hypothesis function, the term $M_{ij}$ can be used as a penalization term to enforce some kind of expected behavior to the model, e.g., by adding penalizations to some edges of the graph. Figure \ref{fig:IO_TSP} illustrates a signal and expert response for an R-TSP. Thus, to learn a cost function (i.e., learn a vector of edge weights) that replicates (or approximates as well as possible) the example routes in the dataset, we can use Algorithm \ref{alg:first_order} to solve Problem \eqref{eq:loss_minimization} with hypothesis \eqref{eq:TSP_hypothesis} and constraint set \eqref{eq:TSP_set}. This formulation will serve as the basis of our IO approach to tackle the Amazon Challenge.

\begin{figure}
\centering
\captionsetup[subfigure]{width=0.96\linewidth}%
    \begin{subfigure}[t]{0.28\linewidth}
        \includegraphics[width = \linewidth]{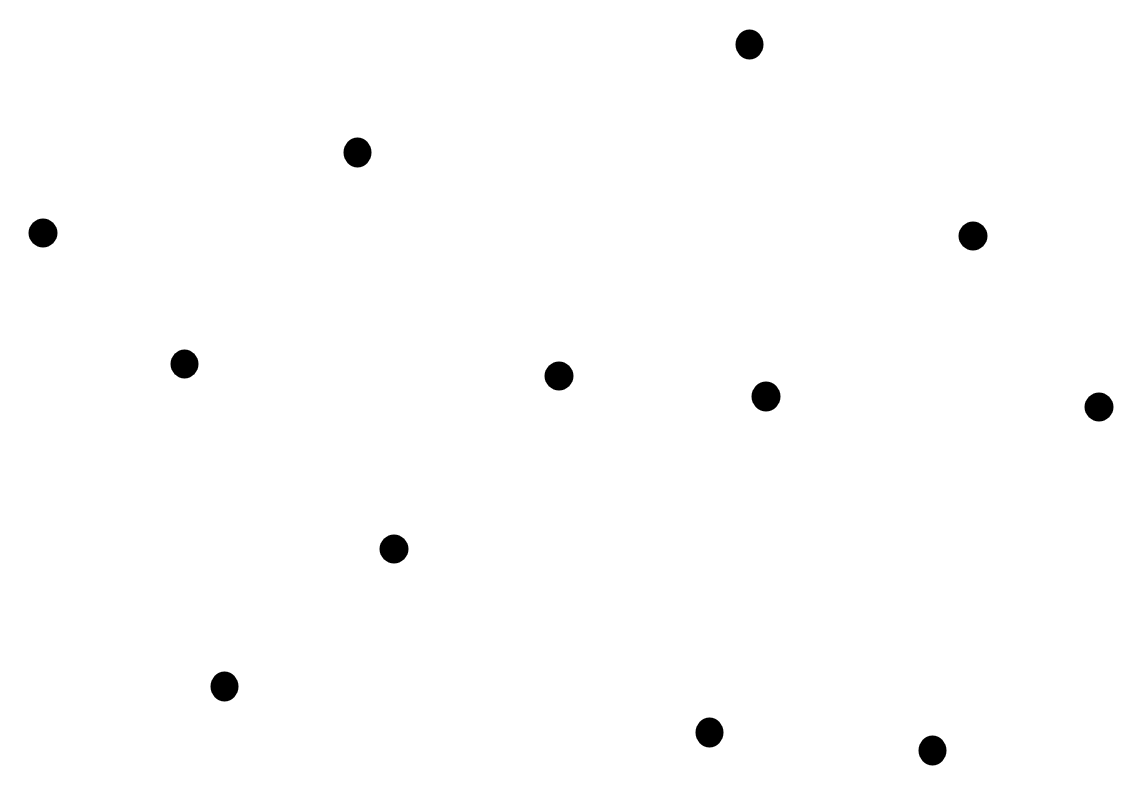}
        \caption{Nodes of a graph $\mathcal{G}$.}
        \label{fig:graphG}
    \end{subfigure}
    \hfill
    \begin{subfigure}[t]{0.28\linewidth}
        \includegraphics[width = \linewidth]{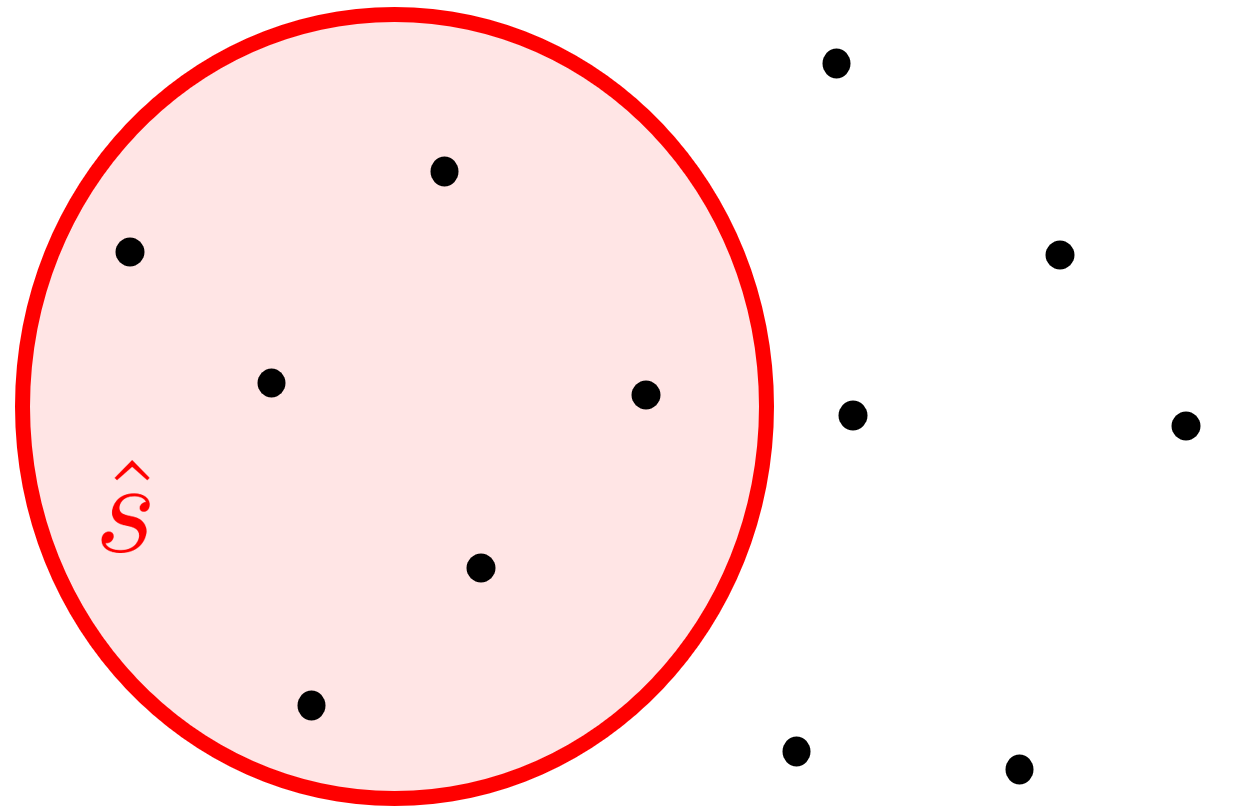}
        \caption{Signal.}
        \label{fig:signal}
    \end{subfigure}
    \hfill
    \begin{subfigure}[t]{0.28\linewidth}
        \includegraphics[width = \linewidth]{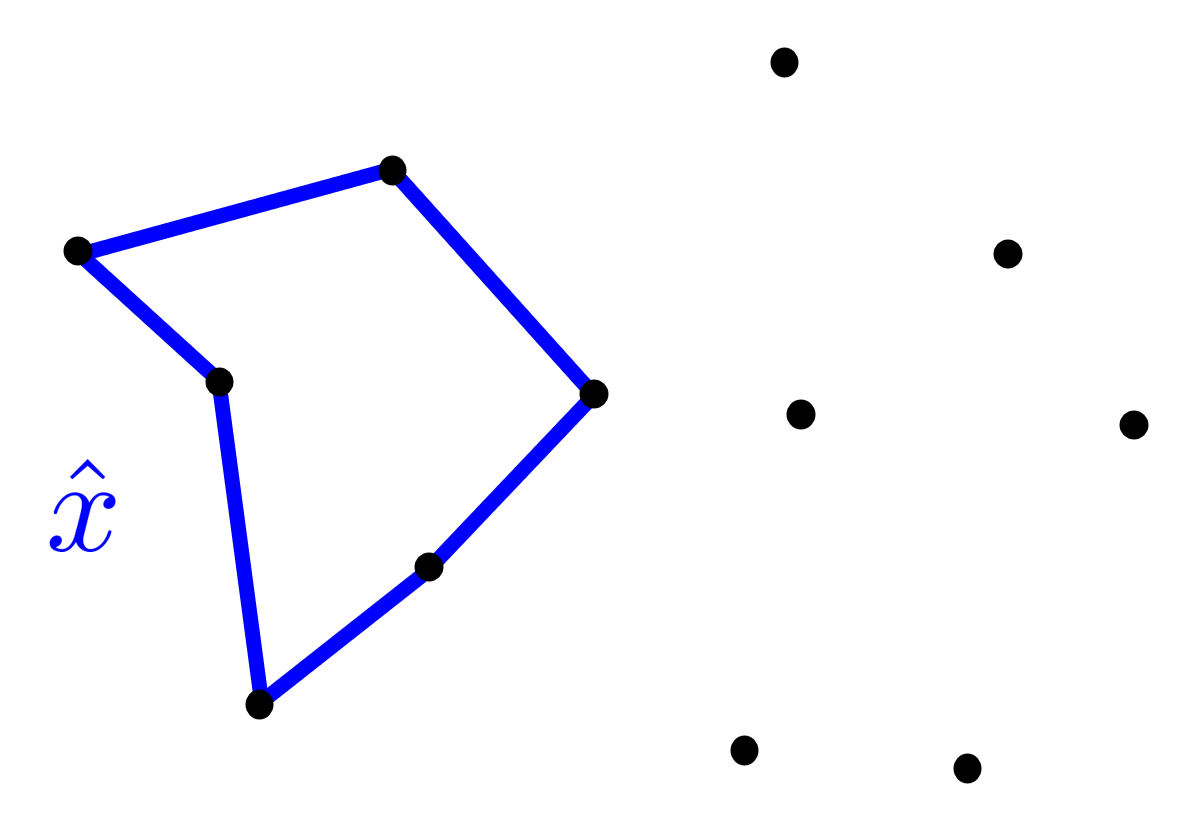}
        \caption{Expert response.}
        \label{fig:action}
    \end{subfigure}
\caption{Illustration of signal and expert response for an R-TSP.}
\label{fig:IO_TSP}
\end{figure}

We conclude this section with some general comments about our IO approach. First, our IO approach does not require the dataset $\{(\hat{s}^{[i]}, \hat{x}^{[i]})\}_{i=1}^N$ to be consistent with a single cost function (i.e. a single set of edge weights), which is to be expected in any realistic setting, due to model uncertainty, noisy measurements or bounded rationality \cite{mohajerin2018data}. Also, we showed how to use our IO approach for SCVRPs, VRPTWs, and R-TSP scenarios, but we emphasize that the methodology developed in this section could be easily adapted to different kinds of routing problems. For instance, if the problem was a VRP backhauls, or pickup and delivery locations, we could easily account for these characteristics, for example, by changing the constraint set $\mathbb{X}(\hat{s})$ of our IO model \cite{toth2002vehicle}, or in other words, by modifying the problem we assumed the expert agent is solving to generate its response. Notice that in any case, the methodology developed in sections \ref{sec:hypothesis}, \ref{sec:loss_func}, and \ref{sec:algorithm} would not change, which highlights the generality and flexibility of our IO approach. As a final comment, we mention that our approach can easily be adapted to the scenario where new signal-response examples arrive in an \textit{online} fashion. That is, instead of learning from an offline dataset of examples, we gradually update the edge weights (i.e., $\theta_t$) with examples that arrive online, similar to \cite{barmann2017emulating}. This can be done straightforwardly by adapting Algorithm \ref{alg:first_order} to use examples that arrive online in the same way it uses the signal-response pairs $(\hat{s}^{[\pi_i]}, \hat{x}^{[\pi_i]})$.

%%%%%%%%%%%%%%%%%%%%%%%%%%%%%%%%%%%%%%%%%%%%%%%%%%%%%%%%%%%%%%%%%%%%

\section{Amazon Challenge}
\label{sec:challenge}

In this section, we describe the Amazon Challenge, which we use as a real-world application to assess our IO approach. A detailed description of the data provided for the challenge can be found in \cite{merchan20222021}. In summary, Amazon released two datasets for this challenge: a training dataset and a test dataset. The training dataset consists of 6112 historical routes driven by experienced drivers. This dataset is composed of routes performed in the metropolitan areas of Seattle, Los Angeles, Austin, Chicago, and Boston, and each route is characterized by several features. Figure \ref{fig:dataset_description} shows a high-level description of the features available for each example route. Each of these routes starts at a depot, visits a collection of drop-off stops assigned to the driver in advance, and ends at the same depot. Thus, each route can be interpreted as an R-TSP route. Figure \ref{fig:DBO1_all_routes} shows 8 example routes leaving from a depot in Boston, where different colors represent different routes. Each stop in every route was given a Zone ID, which is a unique identifier denoting the geographical planning area into which the stop falls, and is devised internally by Amazon \cite{merchan20222021}. Some stops in the dataset are not given a Zone ID, so for these stops, we assign them the Zone ID of the closest zone (in terms of Euclidean distance). Turns out, this predefined zoning of the stops is a key piece of information about the Amazon Challenge. This will be discussed in detail in the subsequent sections of this paper.

\begin{figure}
    \centering
    \includegraphics[width = 1\linewidth]{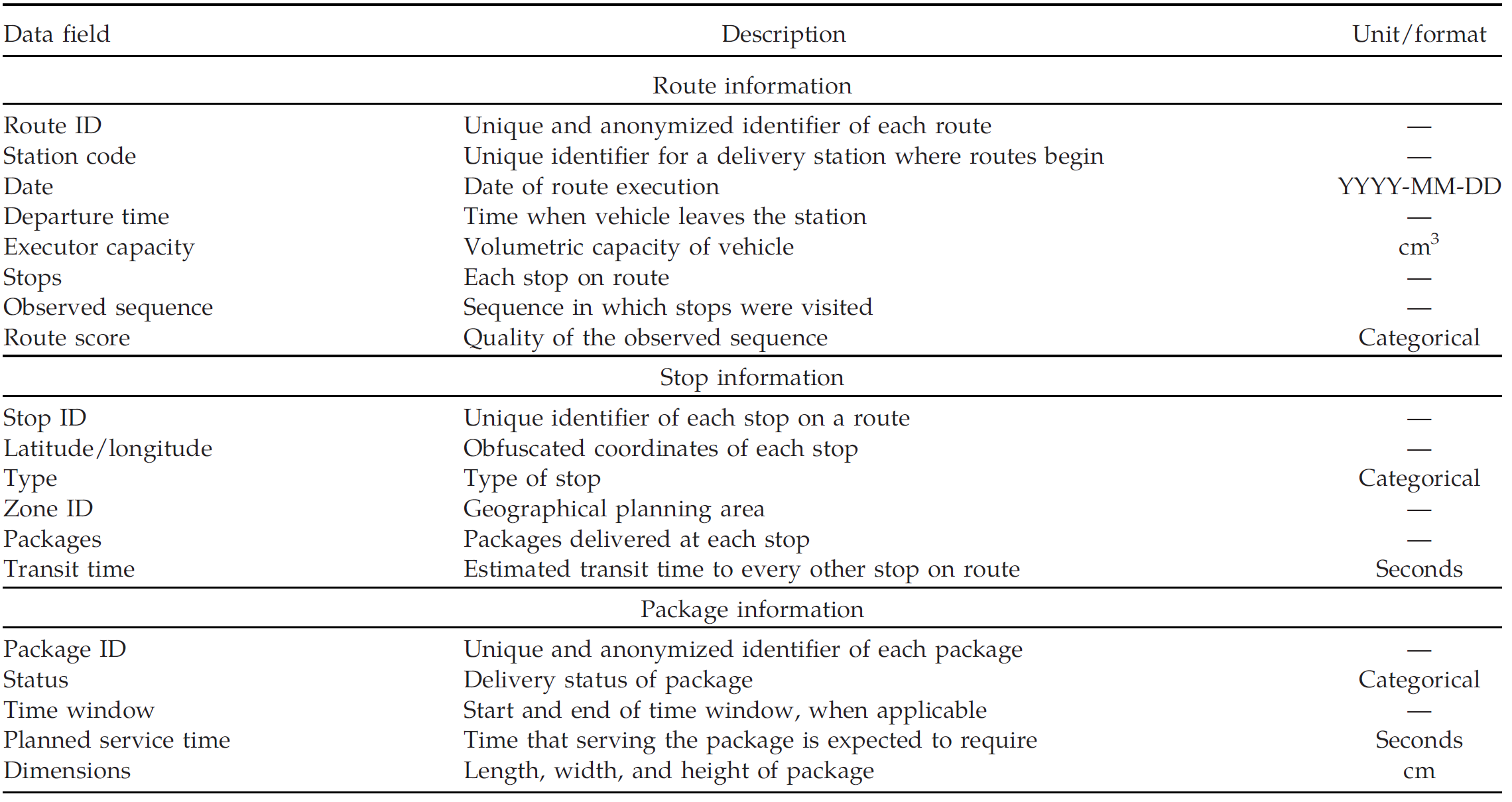}
    \caption{High-level description of data fields provided in the Amazon Challenge data set \cite{merchan20222021}.}
    \label{fig:dataset_description}
\end{figure}

\begin{figure}
    \centering
    \includegraphics[width = 0.5\linewidth]{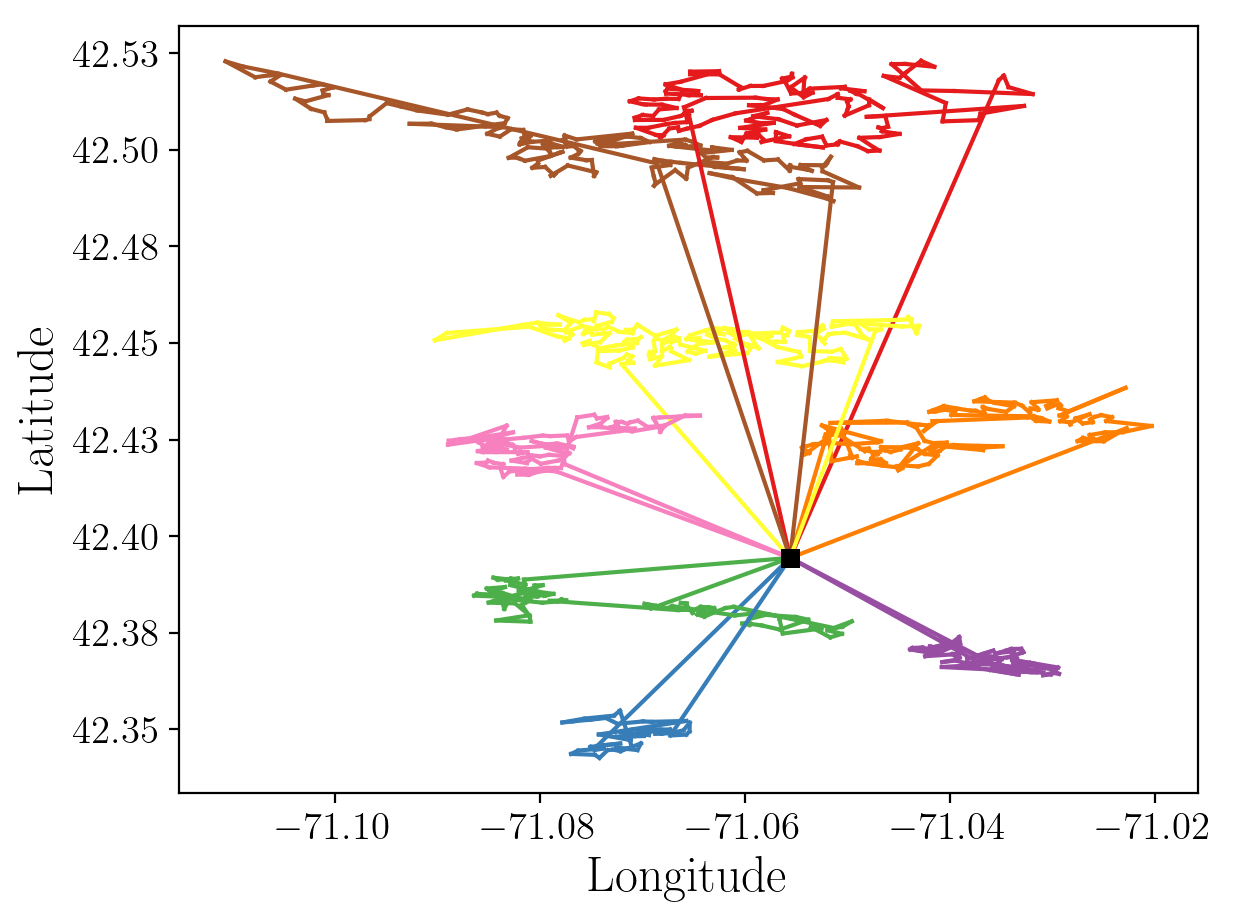}
    \caption{Example routes from depot DBO1 in Boston, where different colors represent different routes.}
    \label{fig:DBO1_all_routes}
\end{figure}

% \begin{figure}
%     \centering
%     \includegraphics[width = 0.7\linewidth]{figures/routes_by_city2.png}
%     \caption{Number of examples routes by city \cite{contreras2021learning}.}
%     \label{fig:routes_by_city}
% \end{figure}

As previously mentioned, the goal of the challenge was to incorporate the preferences of experienced drivers into the routing of last-mile delivery vehicles. Thus, rather than coming up with TSP strategies that minimize time or distance given a set of stops to be visited, the goal of the challenge was to learn from historical data how to route like the expert drivers. To this end, a test dataset consisting of 3072 routes was also made available to evaluate the proposed approaches. To compare the routes from expert human drivers to the routes generated by the models submitted to the challenge, Amazon devised a scoring metric that computes the similarity between two routes, where the lower the score, the more similar the routes. In particular, if $A$ is the historically realized sequence of deliveries, sequence $B$ is the sequence of deliveries generated by a model, its score is defined as follows:
\begin{equation}
    \label{eq:amazon_score}
    \text{score}(A, B) = \frac{SD(A,B) \cdot {ERP}_{norm}(A,B)}{{ERP}_e(A,B)},
\end{equation}
where sequence $SD$ denotes the Sequence Deviation of $B$ with respect to $A$, $ERP_{norm}$ denotes the Edit Distance with Real Penalty applied to sequences $A$ and $B$ with normalized travel times, and ${ERP}_e$ denotes the number of edits prescribed by the $ERP$ algorithm on sequence $B$ with respect to $A$. If edit distance with real penalty prescribes 0 edits, then the above formula is replaced by the sequence deviation, multiplied by 0. Thus, the Amazon score combines a similarity measure that takes into account only the sequence of stops in the routes (i.e., $SD$) with a similarity measure that also takes the travel times between stops into account (i.e., $ERP$). The details of the score computation can be found at \cite{amazon2021scoring}. Notice that, instead of using our tailored loss function \eqref{eq:loss_function}, one could use the scoring function \eqref{eq:amazon_score} directly to learn the routing model, which makes intuitive sense since minimizing this score is the actual goal of the Amazon Challenge. However, the resulting IO problem would be an intractable bi-level optimization problem, similar to the case when using the so-called \textit{predictablity loss} for IO \cite{aswani2018inverse}. This issue highlights one of the big advantages of using our tailored loss function \eqref{eq:loss_function}: the resulting optimization problem is convex and subgradients of the loss function can be computed in closed form, thus, making the problem amendable to be solved using efficient first-order methods, such as Algorithm \ref{alg:first_order}.

In summary, a dataset of 6112 historical routes from expert human drivers was made available for the Amazon Challenge. Using this dataset, the goal is to come up with routing methods that replicate the way human drivers route vehicles. To evaluate the proposed approaches, Amazon used a test dataset consisting of 3072 unseen examples. In order to compare how similar the routes from this dataset are to the ones computed by the submitted approaches, a similarity score was devised. The final score is the average score over the 3072 test instances. A summary of the scores of the top 20 submissions to the Amazon Challenge can be found at \cite{amazon2021last}. Since each historical route in the dataset of the challenge refers to a driver's route that starts at a depot, visits a predefined set of customers, and then returns to the depot, the expert human routes from the Amazon Challenge can be interpreted as solutions to R-TSPs, and we can use the IO approach to tackle the Amazon Challenge. In other words, we can use IO to estimate the costs they assign to the street segments connecting stops. Ultimately, this will allow us to learn the drivers' preferences, and replicate their behavior when faced with new requests for stops to be visited.

\subsection{Zone IDs and time windows}
\label{sec:zone_id}

In Section \ref{sec:IO_for_TSP}, we describe how IO can be used to learn drivers' preferences from R-TSP examples. Although the Amazon Challenge training dataset consists of 6,112 historical routes, it is difficult to learn any meaningful preference of the drivers at the \textit{stop level} (i.e., individual customer level), since the latitude and longitude coordinates of each stop have been anonymized and perturbed to protect the privacy of delivery recipients \cite{merchan20222021}. However, recall that each stop in the dataset is assigned a Zone ID, which refers to a geographical zone in the city (see Figure \ref{fig:dataset_description}), and each zone contains multiple stops. Analyzing how the human drivers' routes relate to these zones, a critical observation can be made: in the vast majority of the examples, the drivers visit all stops within a zone before moving to another zone (the zone ID of consecutive stops is the same around 85\% of the time). This behavior is illustrated in Figure \ref{fig:driver_stops}. Also, the same zone is usually visited in multiple route examples in the dataset. Thus, instead of learning drivers' preferences at the stop level, we can learn their preferences at the \textit{zone level}. In other words, we consider each zone as a \textit{hypernode} containing all stops with the same Zone ID. Thus, we can create a \textit{hypergraph} with nodes corresponding to the zone hypernodes (see Figure \ref{fig:driver_route_example}). This way, we can view the expert human routes as routes over zones, and we can use our IO approach to learn the weights the drivers use for the edges between zones.

\begin{figure}
\centering
\captionsetup[subfigure]{width=0.96\linewidth}%
    \begin{subfigure}[t]{0.45\linewidth}
        \includegraphics[width = \linewidth]{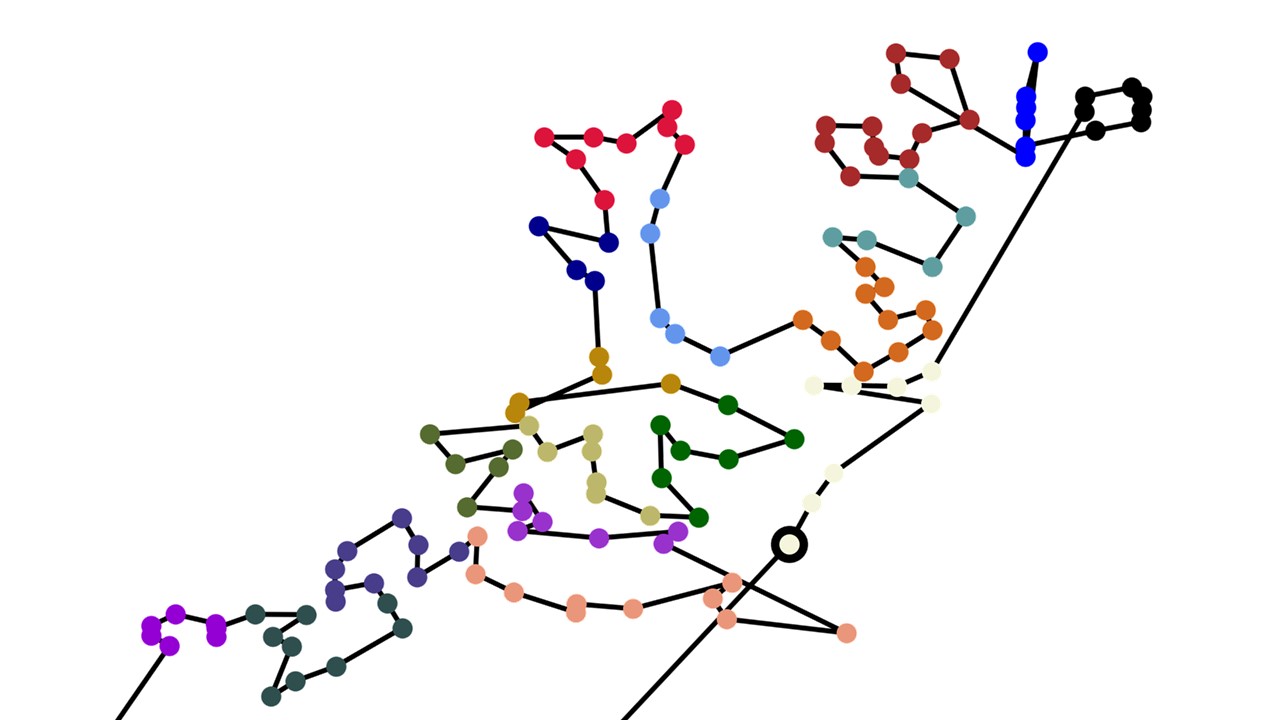}
        \caption{Different colors represent different zones.}
        \label{fig:driver_stops}
    \end{subfigure}
    \begin{subfigure}[t]{0.45\linewidth}
        \includegraphics[width = \linewidth]{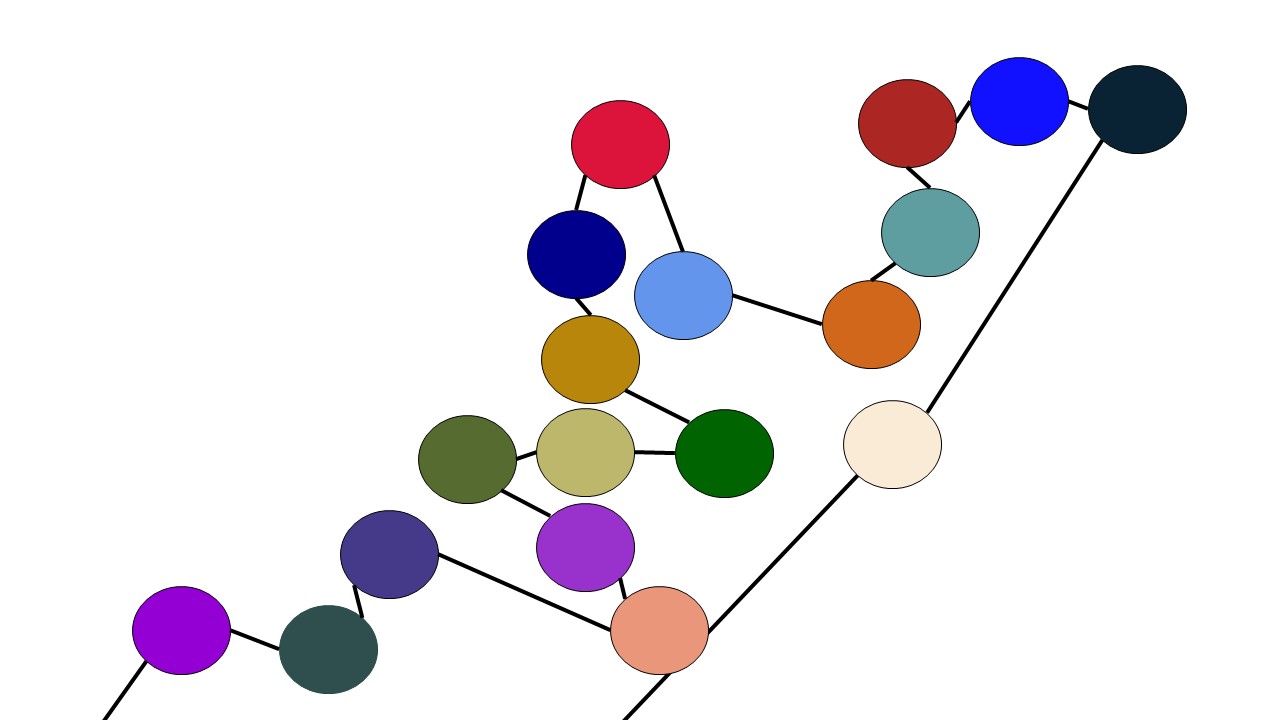}
        \caption{Each zone is substituted by a \textit{hypernode} containing all stops within it.}
        \label{fig:driver_zones}
    \end{subfigure}
\caption{Example of an expert human route from the dataset in terms of its stop sequence and zone sequence.}
\label{fig:driver_route_example}
\end{figure}

Another piece of information in the dataset is that time window targets for package delivery are included for a subset of the stops. These constraints are often trivially satisfied, and ignoring them altogether had minimal impact on the final score of our approach. This was also observed by other contestants of the Amazon Challenge \cite{arslan2021data, cook2022constrained}. Therefore, time windows are ignored in our approach. Moreover, we also ignore all information about the size of the vehicle and the size of the packages to be delivered, as these do not seem to influence the routes chosen by the drivers.

\subsection{Complete method}
\label{sec:complete_method}

In this section, we outline all the steps involved in our IO approach to the Amazon Challenge. As explained in the previous section, due to the nature of the provided data, we focus on learning the preferences of the driver at the zone level. However, the historical routes of the datasets are given in terms of a sequence of stops. Moreover, given a new request for stops to be visited, the learned model should return the sequence of stops, not the sequence of zones. Therefore, intermediate steps need to be taken to go from a sequence of stops to a sequence of zones, and vice-versa. A block diagram of our method is shown in Figure \ref{fig:complete_method}. A detailed description of each step of our method is given in the following.

\textbf{Step 1 (pre-process the data).} The first step is to transform the datasets from stop-level information to zone-level information. Namely, for each data pair of stops to be visited $\hat{s}$ and respective expert route $\hat{x}$ (see R-TSP modeling in Section \ref{sec:IO_for_TSP}), we transform them into a signal $\hat{s}^\text{z}_t$ containing the zones to be visited and respective expert zone sequence $\hat{x}^\text{z}_t$. This is the process illustrated in Figure \ref{fig:driver_route_example}. However, differently from Figure \ref{fig:driver_stops}, there are cases in the dataset where the human driver visits a certain zone, leaves it, and later returns to the same zone. Thus, to enforce that the sequence of zones respects the TSP constraint that each zone is visited only once, when transforming a sequence of stops into a sequence of zones, we consider that a zone is visited at the time the \textit{most consecutive stops} in that zone are visited. To illustrate it, consider the case when a driver visits $7$ stops belonging to zones $A,B,C$, where the sequence of visited stops, in terms of their zones, is $A \to B \to B \to A \to A \to C \to C$. In this case, the driver visits zone $A$, leaves it, and then visits it again. Following our transformation rule, we consider the sequence of zones to be $B \to A \to C$.

\begin{figure}
    \centering
    \includegraphics[width=0.9\linewidth]{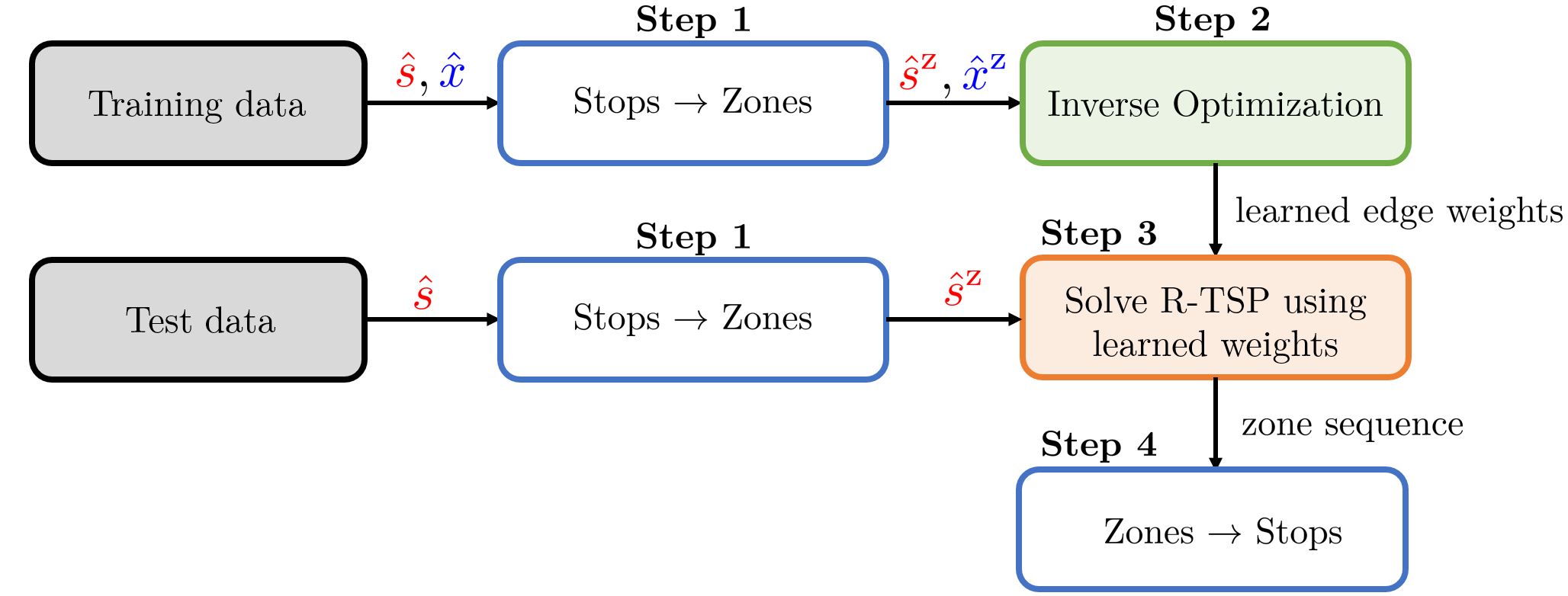}
    \caption{Overview of the proposed method.}
    \label{fig:complete_method}
\end{figure}

\textbf{Step 2 (Inverse Optimization).} Next, considering the hypergraph of zones (i.e., each node represents a zone), we use our IO approach to learn the weights the expert drivers give to the edges connecting the zones. Namely, given a dataset of $N$ examples of zones to be visited and respective zone sequences, we model the problem as an IO problem as in Section \ref{sec:IO_for_TSP}, and we solve Problem \eqref{eq:loss_minimization} using Algorithm \ref{alg:first_order} to learn a cost vector $\theta$, that is, a vector with components corresponding to the learned edge weights between zones.

\textbf{Step 3 (compute the zone sequence).} Let $\hat{s}$ be a set of stops to be visited from the test dataset. To use the weights learned in Step 2 to construct a route for these stops, we first need to transform the signal from the stops to be visited into the \textit{zones} that need to be visited by the driver $\hat{s}^\text{z}$ (Step 1). Given the signal of zones to be visited, and the weights $\theta$ learned in Step 2, we solve the R-TSP over zones with \eqref{eq:TSP_hypothesis} as the cost function and \eqref{eq:TSP_set} as the constraint set. Specific choices for $M_{ij}$ will be discussed in Section \ref{sec:IO_amazon}. The solution to this problem contains the sequence of zones the driver needs to follow. In some cases, routes in the test dataset contain zones that are not visited in the training dataset. In these cases, since the vector of learned weights $\theta$ does not contain information about these zones, we set their weights equal to the Euclidean distance between the center of the zones.

\textbf{Step 4 (from a zone sequence to a stop sequence).} The final step of our method consists of computing the complete route at the stop level. In other words, given the zone sequence computed in Step 3 (e.g., Figure \ref{fig:driver_zones}), we want to find a respective stop sequence (e.g., Figure \ref{fig:driver_stops}). We do it using a penalization method. Let $c_{ij}$ be the transit time from stop $i$ to stop $j$ (this information is provided in the Amazon Challenge dataset, see Figure \ref{fig:dataset_description}). To enforce the zone sequence found in Step 3, we create the penalized weights $\tilde{c}_{ij}$, defined as
$$
\tilde{c}_{ij} \coloneqq
\begin{cases}
  c_{ij}, & \text{if stops $i$ and $j$ are in the same zone} \\
  c_{ij} + R, & \text{if the zone of stop $j$ should be visited directly after the zone of stop $i$} \\
  c_{ij} + 2R, & \text{otherwise},
\end{cases}
$$
where $R > 0$ is a penalization constant. For a large enough $R$, this modification ensures that all stops within a zone are visited before moving to another zone and that the sequence of zones from Step 3 is respected. Thus, we compute the complete route over a set of stops $\hat{S}$ by solving the R-TSP over stops
$$
\min_{x \in \mathbb{X}(\hat{S})} \sum_{i=1}^m \sum_{j=1}^m \tilde{c}_{ij} x_{ij},
$$
where $\mathbb{X}$ is the R-TSP constraint set \eqref{eq:TSP_set} and $m$ is the total number of stops.

As explained in Section \ref{sec:challenge}, the dataset comprises example routes from 5 cities in the USA, and each city can have multiple depots. It turns out that each zone is always served by the same depot, thus, we can learn the preferences of the drivers separately for each depot. Consequently, when using our approach in the Amazon Challenge, we perform steps 1 to 4 separately for each depot. More details on the number of zones served by each depot can be found in Section \ref{sec:complexity} and \cite{merchan20222021}.

%%%%%%%%%%%%%%%%%%%%%%%%%%%%%%%%%%%%%%%%%%%%%%%%%%%%%%%%%%%%%%%%%%%%

\section{Numerical Results}
\label{sec:numerical}

In this section, we numerically evaluate our Inverse Optimization approach to the Amazon Challenge. To compute the zone sequence (i.e., step 3 of our method) we use a Gurobi-based TSP solver \cite{gurobi} (except for the experiments in Section \ref{sec:complexity}) and to compute the complete route at the stop-level (i.e., step 4 of our method), we use the LKH-3 solver \cite{helsgaun2017extension}. The difference between the two is that the Gurobi-based TSP solver is exact, but usually slower for large TSPs, whereas the LKH-3 solver is approximate, but usually faster. Thus, the choice of which solver to use is based on the size of the TSP problem that has to be solved. In our IO approach to the Amazon Challenge, the TSP problem over zones is usually a relatively small one (less than 50 zones), so we solve it using the exact Gurobi-based solver. On the other hand, the TSP problem over stops is usually a larger one (+100 stops), so we solve it using the LKH-3 solver (solving it using the Gurobi-based solver led to little to no improvement in the final Amazon score, while taking significantly more time). Our experiments are reproducible, and the underlying source code is available at \cite{zattoniscroccaro2023amazon}. In particular, we use the InvOpt python package \cite{zattoniscroccaro2023invopt} for the IO part of our approach.

\subsection{IO for the Amazon Challenge}
\label{sec:IO_amazon}

In this section, we present results for two IO approaches: a general approach and the tailored approach proposed in this paper. For both approaches, we use $\eta_t = 0.0005/t$ and $\theta_1$ (that is, the initial point of the used algorithm) as the Euclidean distance between the center of the zones in the training dataset, where we compute the center of a zone by taking the mean of the longitudinal and lateral coordinates of all stops within the zone. All scores reported in this section are the Amazon score of the learned model evaluated in the test dataset.

\textbf{General IO approach.} As a benchmark for our tailored IO methodology, we apply a general IO methodology to the Amazon Challenge. This can be interpreted as using the general IO methodology developed in \cite{zattoniscroccaro2023learning}. In particular, we have the following design choices:
\begin{itemize}
    \item \textit{Hypothesis function:} We use a linear hypothesis, that is, \eqref{eq:TSP_hypothesis} with $M_{ij} = 0$ for all $i,j \in V$.
    \item \textit{IO algorithm:} We use the SAMD algorithm from \cite{zattoniscroccaro2023learning}, with $T=5N$, $\omega (\theta) = \frac{1}{2}\|\theta\|_2^2$, $\Theta = \{\theta : \theta \geq 0\}$.
\end{itemize}
The final Amazon score of the learned IO model is \textbf{0.0535}. This score ranks 11th compared to the 48 models that qualified for the final round of the Amazon Challenge \cite{amazon2021last}. Although this is already a good result, we can significantly improve this score by using our IO approach tailored to routing problems.

\textbf{Tailored IO approach.} To apply our tailored IO approach to the Amazon Challenge, we have the following design choices:
\begin{itemize}
    \item \textit{Hypothesis function:} We use the affine hypothesis \eqref{eq:TSP_hypothesis}. The weights $M_{ij}$ of the affine term are defined below.
    \item \textit{IO algorithm:} We use our tailored Algorithm \ref{alg:first_order}, with standard update steps and $T=5$.
\end{itemize}

As also noticed by some of the contestants of the original Amazon Challenge \cite{winkenbach2021technical}, by carefully analyzing the sequence of zones followed by the human drivers, one can uncover patterns that can be exploited. These patterns are related to the specific encoding of the \textit{Zone ID} given to the zones. Namely, the Amazon Zone IDs have the form \textit{W-x.yZ}, where $W$ and $Z$ are upper-case letters and $x$ and $y$ are integers. Table \ref{table:zone_seq} shows an example of a zone sequence from the Amazon Challenge dataset. Although the zone sequence shown in Table \ref{table:zone_seq} is just a small example, it contains the patterns that we exploit to improve our approach, which are the following:

\begin{itemize}
    \item \textit{Area sequence}: For a zone with Zone ID \textit{W-x.yZ}, define its \textit{area} as \textit{W-x.Z}. It is observed that the drivers tend to visit all zones within an area before moving to another area.
    
    \item \textit{Region sequence}: For a zone with Zone ID \textit{W-x.yZ}, define its \textit{region} as \textit{W-x}. It is observed that the drivers tend to visit all areas within a region before moving to the next region.
    
    \item \textit{One unit difference:} Given two zone IDs $z_1 = \textit{W-x.yZ}$ and $z_2 = \textit{A-b.cD}$, we define the difference between two zone IDs as $d(z_1, z_2) \coloneqq |\text{ord}(W)-\text{ord}(A)| + |x-b| + |y-c| + |\text{ord}(Z)-\text{ord}(D)|$, where the function \texttt{ord} maps characters to integers (in our numerical results, we use Python's built-in \texttt{ord} function). In particular, letters that come after the other in the alphabet are mapped to integers that differ by 1, e.g., $\text{ord(G)} = 71$ and $\text{ord(H)} = 72$. It is observed that for subsequent zone IDs in the zone sequences from the Amazon dataset, the difference between these zone IDs tends to be small (most often 1).
\end{itemize}

\begin{table}
\centering
\begin{tabular}{c | c c c c c c c c c c c c c c c c} 
$W$ & G & G & G & G & G & G & G & G & G & G & G & H & H & H & H & H \\ 
$x$ & 1 & 1 & 1 & 1 & 1 & 1 & 1 & 1 & 1 & 1 & 1 & 1 & 1 & 1 & 1 & 1 \\
$y$ & 1 & 2 & 3 & 2 & 1 & 1 & 2 & 3 & 3 & 2 & 1 & 1 & 2 & 3 & 3 & 2 \\
$Z$ & E & E & G & G & G & H & H & H & J & J & J & A & A & A & B & B \\
\end{tabular}
\caption{Part of the zone sequence from the example route with RouteID f6cf991e-9bb0-46b9-a07d-8192c2d29bb1.}
\label{table:zone_seq}
\end{table}

Next, we incorporate these observations into our IO learning approach. One way to force the routes from our IO model to respect these behaviors (i.e., the ``area sequence'', ``region sequence'' and ``one unit difference'' behaviors) is to use penalization terms. In a sense, using these penalizations can be interpreted as modifying what we believe is the optimization problem the expert human drivers solve to compute their routes. Thus, we use \eqref{eq:TSP_hypothesis} as our hypothesis function, with $M_{ij} = M^A_{ij} + M^R_{ij} + M^d_{ij}$, where $M^A_{ij} = 0$ if zones $i$ and $j$ are in the same area, and $M^A_{ij} = 1$ otherwise, $M^R_{ij} = 0$ if zones $i$ and $j$ are in the same region, and $M^R_{ij} = 1$ otherwise, and $M^d_{ij} = d(i, j)$, that is, the difference between zones $i$ and $j$. Since for Algorithm \ref{alg:first_order} we initialize $\theta_1$ as the Euclidean distance between zone centers, where the coordinates of the centers are given by their latitudes and longitudes, each component of $\theta_1$ is much smaller than 1. This makes a penalization of one unit (such as the ones used for $M^A_{ij}$ and $M^R_{ij}$) enough to enforce that the resulting routes will respect the area sequence and region sequence behaviors. The same idea applies to the ``one unit variance'' penalization.

\begin{figure}
    \centering
    \includegraphics[scale=0.55]{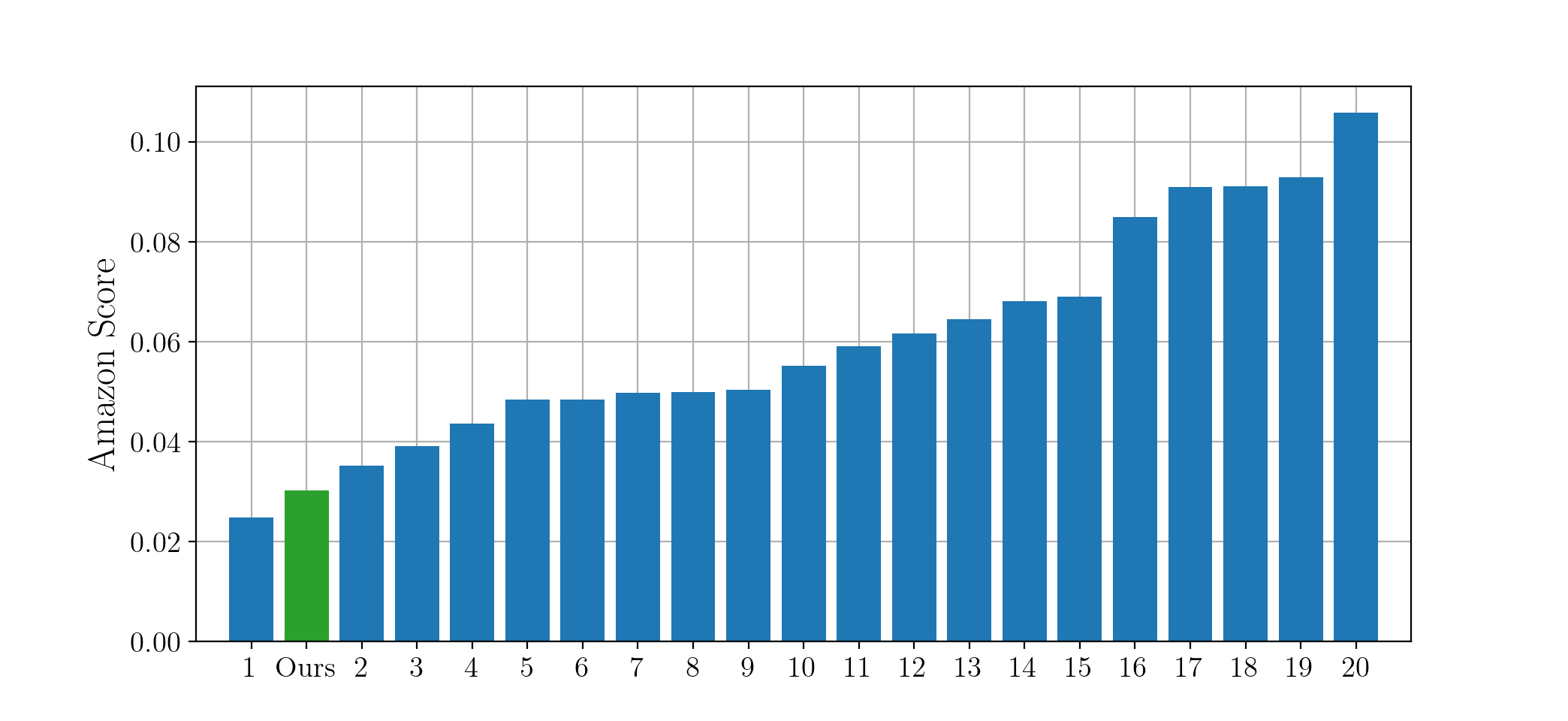}
    \caption{Our final score compared to the scores of the top 20 contestants of the Amazon Challenge.}
    \label{fig:score_leaderboard}
\end{figure}

The final Amazon Challenge score achieved by our tailored approach is \textbf{0.0302}, which significantly improves the 0.0535 score of the benchmark (i.e., general IO) approach. Figure \ref{fig:score_leaderboard} shows the scores of the top 20 submissions of the Amazon Challenge. As can be seen, our score ranks 2nd compared to the 48 models that qualified for the final round of the Amazon Challenge \cite{amazon2021last}. Compared to the initial weights fed to the tailored IO algorithm, considering only the set of weights changed by the first-order method, the change was of $28.6$\% on average, with the 10th and 90th percentiles equal to $0.8$\% and $68.8$\%, respectively. These changes may be interpreted in the following sense: if the first-order algorithm increases the weight of the edge connecting zones \textit{A} and \textit{B}, it means that according to the data, the expert human driver considers this edge more costly than the initial weights (i.e., than the Euclidean distance between the zones), or in other words, the drivers have less preference in using this edge. Similarly, if the algorithm decreases the weight, we can interpret it as the drivers considering this edge less costly, thus, having a stronger preference in using this edge when driving. Moreover, to test the robustness of the learned model, we added Gaussian perturbations to the weights learned and computed the Amazon score of the perturbed model. Adding Gaussian perturbations with magnitudes (in expectation) of $0.1$\% and $1$\% compared to the average magnitude of the weight matrix led to an increase of $0.7$\% and $4$\% in the Amazon score, respectively. Thus, we observed the expected behavior from a robust model: small perturbations lead to small changes in the output of the model.

In our experience, small perturbations on the learned model do not tend to lead to significant changes in the resulting route. To evaluate its robustness, we can add Gaussian perturbations to the weights learned for the Amazon Challenge and compute their effect on the Amazon score of the model. Adding Gaussian perturbations with magnitudes (in expectation) of $0.1$\% and $1$\% compared to the average magnitude of the weight matrix led to an increase of $0.7$\% and $4$\% in the Amazon score, respectively. Thus, we observed the expected behavior from a robust model: small perturbations lead to small changes in the output of the model while increasing the perturbations increases their impact on the model. We have added a discussion on this point to the revised version of the paper (page 21).

\subsection{Computational and time complexity}
\label{sec:complexity}

In this section, we present further numerical experiments using the Amazon Challenge datasets, focusing on the computational and time complexity of Algorithm \ref{alg:first_order}. Before we present our results, as discussed at the end of Section \ref{sec:complete_method}, recall that we apply our IO learning method separately for each depot in the Amazon Challenge training dataset. Thus, assuming we can run Algorithm \ref{alg:first_order} in parallel for all depots, the complexity of computing the final IO model for all depots equals the complexity of computing the IO model for the largest depot in the dataset. For the Amazon Challenge, the largest depot dataset is DLA7 in Los Angeles, which we thus use to discuss the complexity of our approach.

\textbf{Dataset size versus performance.} First, we study the performance of our IO approach by changing the size of the training dataset. That is, instead of using the entire training dataset of the Amazon Challenge to train the IO model, we test the impact of using only a fraction of the available data. Figure \ref{fig:dataset_size_experiment} shows the results of this experiment. Figure \ref{fig:dataset_size} shows the Amazon score achieved, per epoch, by Algorithm \ref{alg:first_order} using different fractions of the Amazon training dataset. Figure \ref{fig:dataset_size_times} shows the time it took to run Algorithm \ref{alg:first_order} for 5 epochs, for the different fractions of the training dataset. As expected, the more data we feed to Algorithm \ref{alg:first_order}, the better the score gets, and the longer the training takes. Interestingly, notice that using only $20\%$ of the data provided for the challenge, our IO approach is already able to learn a routing model that scores $0.0334$, which would still rank 2nd compared to the 48 models that qualified for the final round of the Amazon Challenge.

\begin{figure}
\centering
\captionsetup[subfigure]{width=0.96\linewidth}%
    \begin{subfigure}[t]{0.45\linewidth}
        \includegraphics[width = \linewidth]{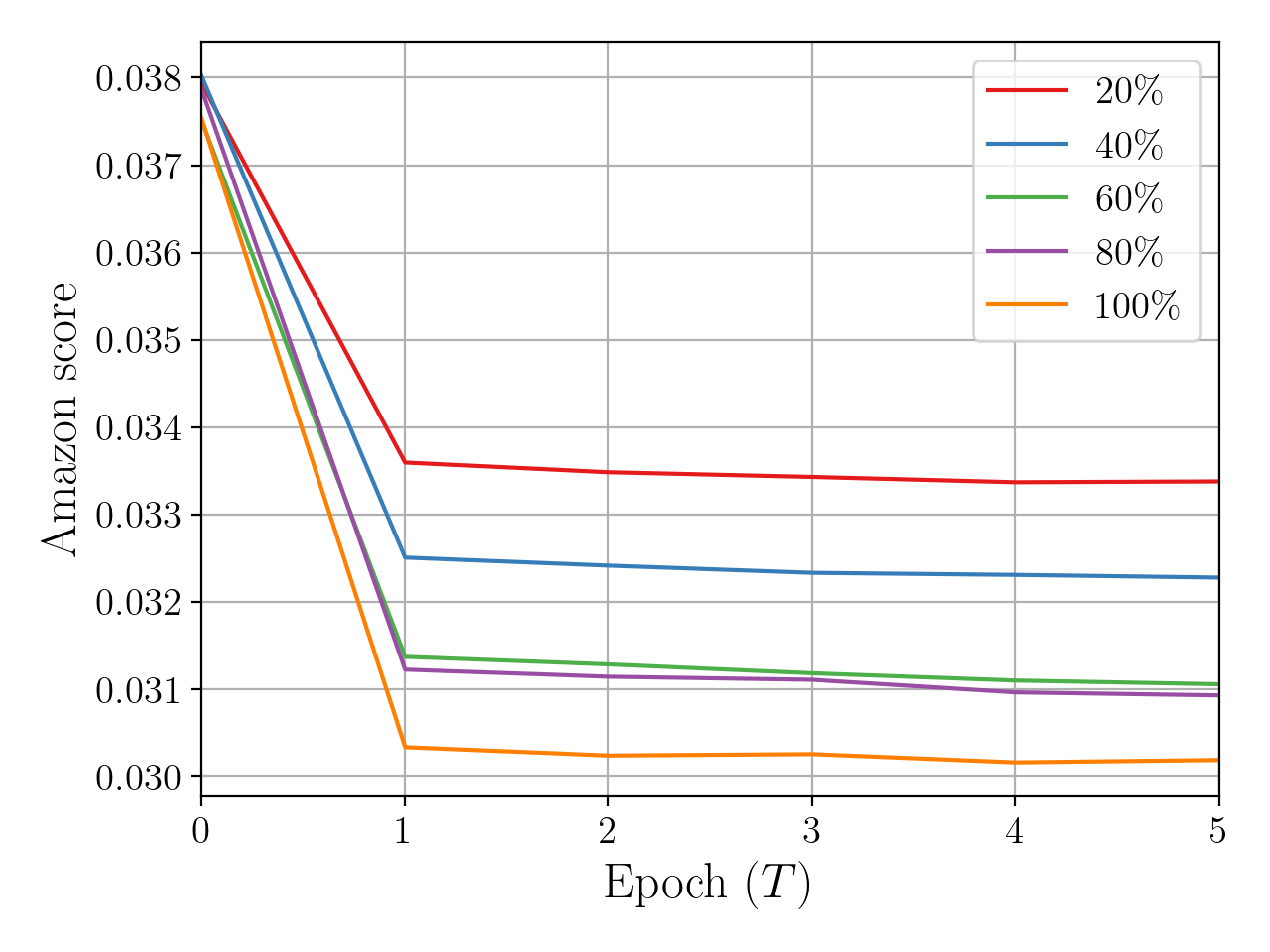}
        \caption{Amazon score on the test dataset, for models learned using different fractions of the training dataset.}
        \label{fig:dataset_size}
    \end{subfigure}
    \begin{subfigure}[t]{0.45\linewidth}
        \includegraphics[width = \linewidth]{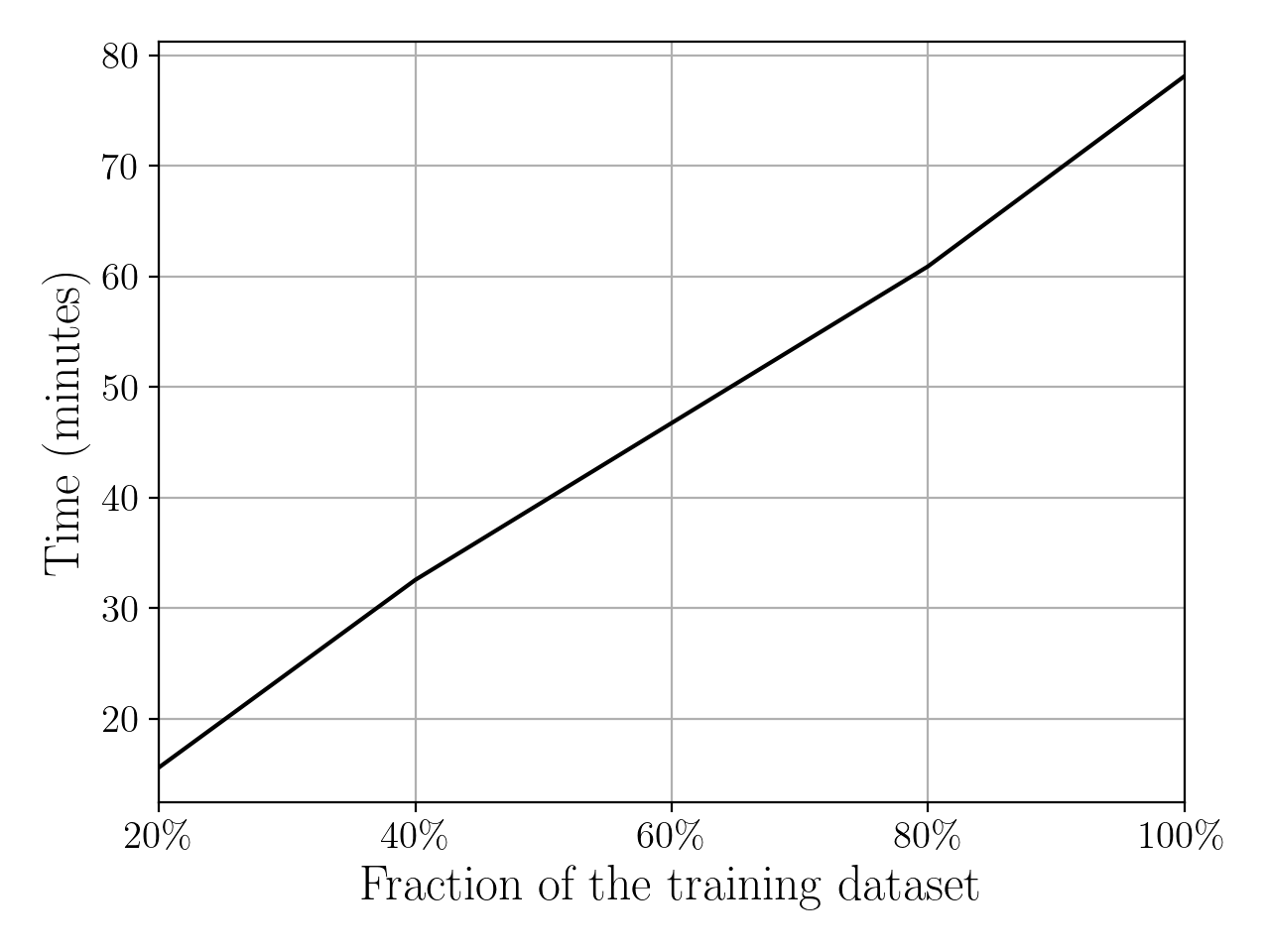}
        \caption{Time taken to run 5 epochs of Algorithm \ref{alg:first_order}.}
        \label{fig:dataset_size_times}
    \end{subfigure}
\caption{Comparison of the Amazon score and learning time of models using different fractions of the Amazon Challenge training dataset.}
\label{fig:dataset_size_experiment}
\end{figure}

\textbf{Time complexity and approximate A-FOP.} In practice, the most time consuming component of Algorithm \ref{alg:first_order} is solving the A-FOP (line 5). As previously explained, for the Amazon Challenge, this problem consists of a TSP over zones (see Step 2 in Section \ref{sec:complete_method}). Thus, for each epoch of Algorithm \ref{alg:first_order}, we need to solve $N$ TSPs, where $N$ is the number of examples in the training dataset. For the depot DLA7, $N = 1133$, and each example contains, on average (rounded up), $23$ zones, where the largest instance has $37$ zones and the smallest has $9$ zones. Thus, for each epoch of Algorithm \ref{alg:first_order}, we need to solve $1133$ TSPs, each with $23$ zones on average. Using an exact Gurobi-based TSP solver, running 5 epochs of Algorithm \ref{alg:first_order} using the entire training dataset took $78.13$ minutes (see Figure \ref{fig:dataset_size_times}).

\begin{table}
\centering
\begin{tabular}{c | c c c c c c c c c c c c c c c c} 
\textbf{TSP solver} & Gurobi & Gurobi & LKH-3 & LKH-3 & OR-Tools & OR-Tools \\
\hline
\textbf{Dataset fraction (\%)} & 20 & 100 & 20 & 100 & 20 & 100 \\
\hline
\textbf{Amazon score} & 0.0334 & 0.0302 & 0.0335 & 0.0302 & 0.0337 & 0.0306 \\
\hline
\textbf{Training time (min)} & 15.59 & 78.13 & 17.87 & 84.62 & 12.72 & 69.51 \\
\end{tabular}
\caption{Summary of the results of Section \ref{sec:complexity}. The Amazon scores are computed using the test dataset. The TSP solver refers to the solver used to solve the A-FOP in line 5 of Algorithm \ref{alg:first_order}, and the training time is the time of running Algorithm \ref{alg:first_order} for 5 epochs.}
\label{table:summary_results}
\end{table}

However, recall that as discussed in Remark \ref{remark:approximate_A-FOP}, Algorithm \ref{alg:first_order} can be used with an \textit{approximate} A-FOP instead of an exact one. The idea here is that solving A-FOP approximately can be faster in practice, which may compensate for a potentially worse performance of the final learned IO model. We test this idea using Algorithm \ref{alg:first_order} with approximate TSP solvers instead of the exact Gurobi-based one. For the approximate solvers, we test the LKH-3 \cite{helsgaun2017extension} and Google OR-Tools \cite{googleOR}. The final Amazon score after 5 epochs of Algorithm \ref{alg:first_order} using Google OR-Tools is $0.0306$, just slightly worse compared to the Gurobi and LKH-3 solvers, but taking only $69.51$ minutes in total. Interestingly, we can push this time even further. As can be seen in Figure \ref{fig:dataset_size}, a good IO model can be achieved using Algorithm \ref{alg:first_order} for only one epoch. Moreover, from Figure \ref{fig:dataset_size}, it can also be seen that a good IO model can be learned using only $20\%$ of the training dataset. Thus, using $20\%$ of the training dataset and running Algorithm \ref{alg:first_order} using the Google OR-Tools TSP solver for 5 epochs, we achieve a final score of $0.0337$ ($0.0341$ after only one epoch) in only $12.72$ minutes (i.e., $2.54$ minutes per epoch on average). This showcases the learning efficiency of our IO methodology, making it also suitable for \textit{real-time} applications, where models need to be learned/updated frequently, and the training time should not take more than a couple of minutes. Table \ref{table:summary_results} summarizes the numerical results of this section.

\subsection{Further Numerical Results}
\label{sec:further_numerical}

\subsubsection{Impact of the initial point}
\label{sec:intial_theta}

An important parameter of Algorithm \ref{alg:first_order} is the initial point $\theta_1^{[1]}$. In practice, the better the initial point, the faster the algorithm will converge, and perhaps more importantly, the better the test dataset performance of the final model tends to be. In this section, we investigate the impact of different choices of $\theta_1^{[1]}$ for the numerical experiment of Section \ref{sec:IO_for_VRPTW} and for the Amazon Challenge. In particular, we compare the ``Euclidean distance'' initialization used to generate the results shown in Figure \ref{fig:vrptw_x_diff_out} and Figure \ref{fig:dataset_size} with a ``uniform'' initialization, where $\theta_1^{[1]}$ is a vector with all its components equal to the same number (this initialization could be used when no prior information on a good cost vector is known). Figure \ref{fig:uniform_theta} shows the results of this experiment. As can be seen, using the Euclidean distance can accelerate the convergence of the algorithm, as in the VRPTW scenario, as well as improve the test dataset performance of the learned model, as in the case of the final Amazon score of the learned models for the Amazon Challenge. This means that, although the Euclidean weights do not explain the routes in the dataset, there is a correlation between the Euclidean distance between nodes and the true weights used to generate the observed routes.

\begin{figure}
\centering
\captionsetup[subfigure]{width=0.96\linewidth}%
    \begin{subfigure}[t]{0.45\linewidth}
        \includegraphics[width = \linewidth]{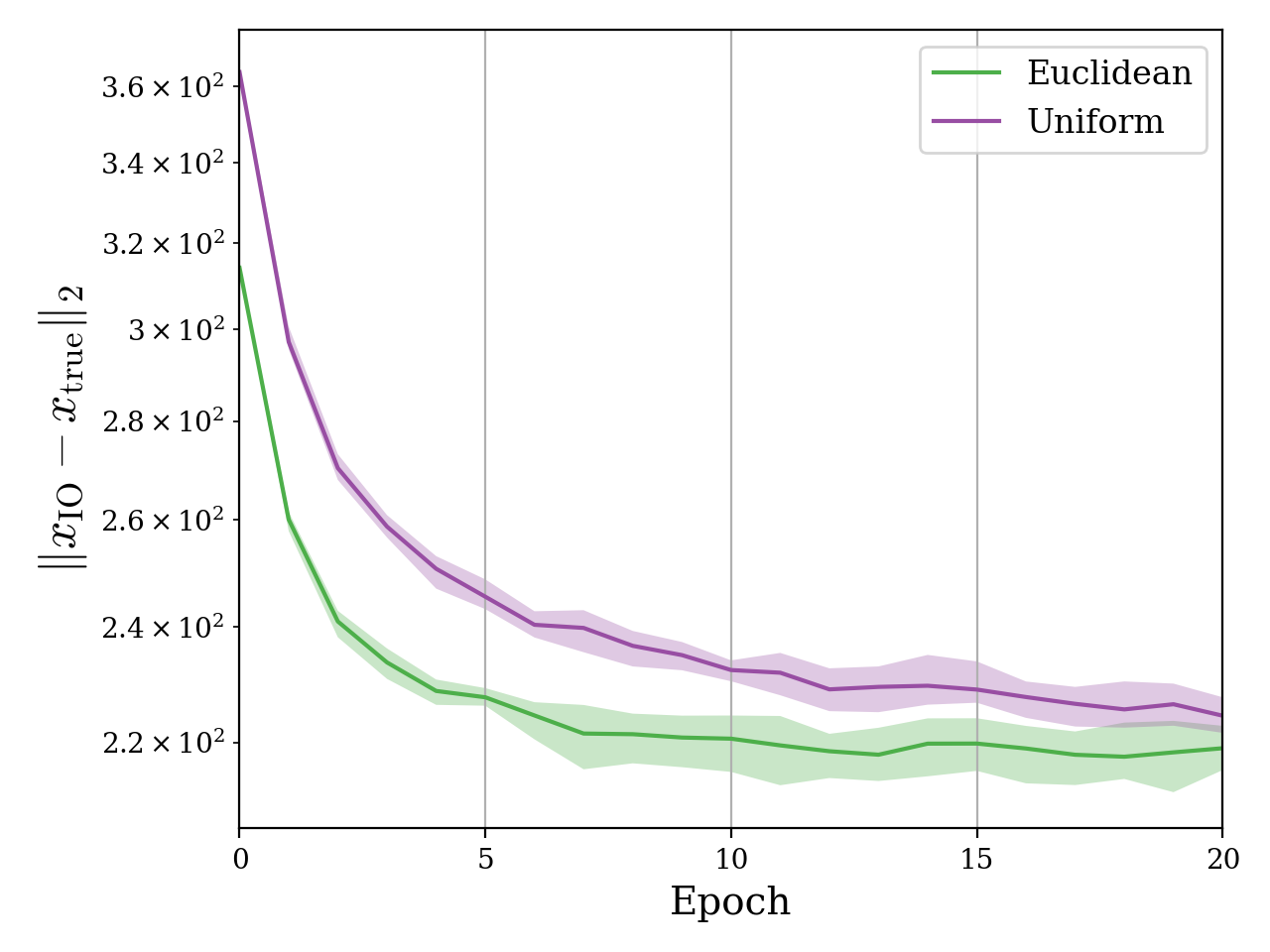}
        \caption{Average error between the routes generated by $\theta_{\text{true}}$ and $\theta_{\text{IO}}$.}
        \label{fig:uniform_theta_vrptw}
    \end{subfigure}
    \begin{subfigure}[t]{0.45\linewidth}
        \includegraphics[width = \linewidth]{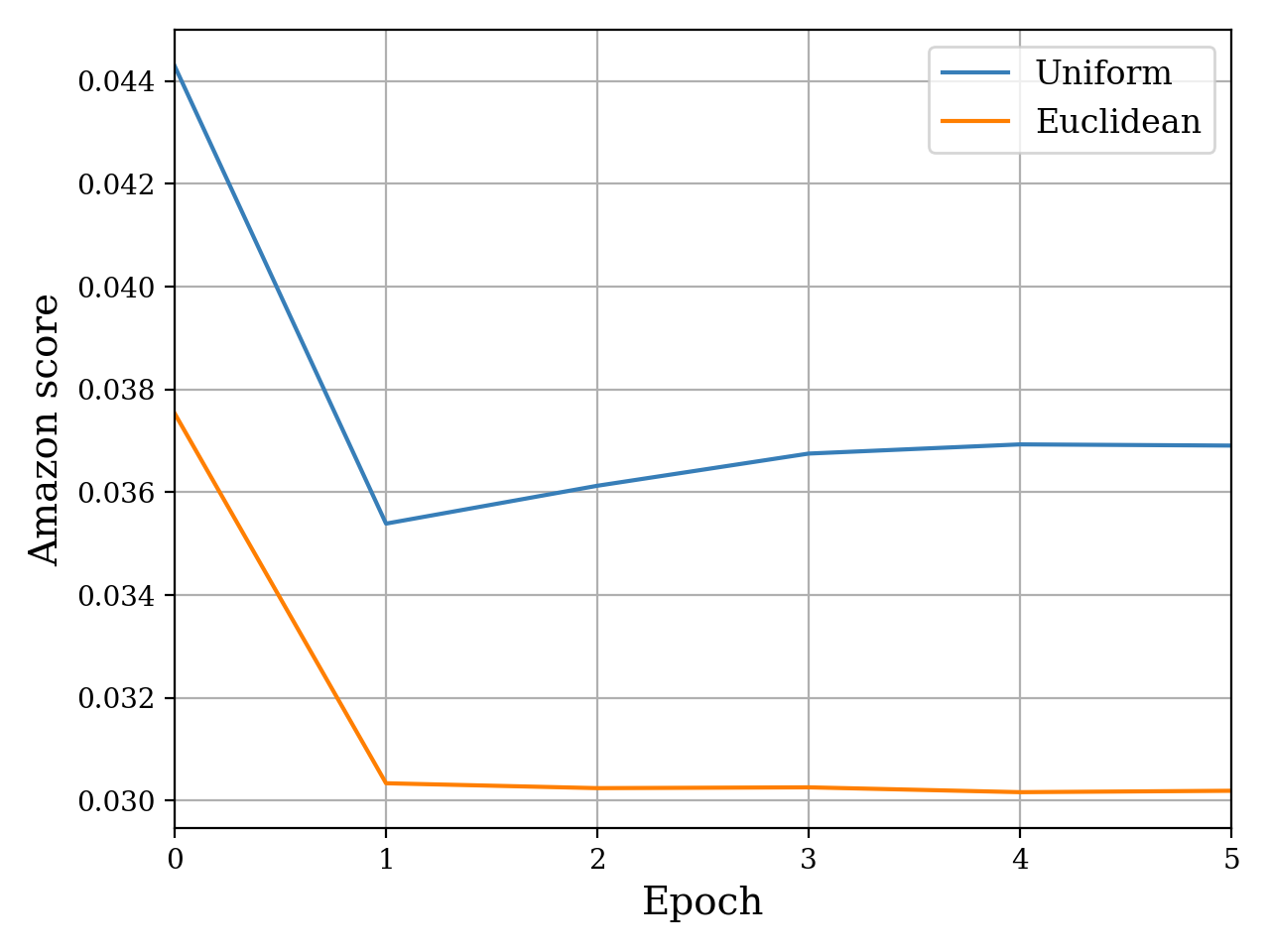}
        \caption{Amazon score of the learned models.}
        \label{fig:uniform_theta_amazon}
    \end{subfigure}
\caption{Comparison between different initializations of $\theta_1^{[1]}$ for Algorithm \ref{alg:first_order}, for the VRPTW scenario of Section \ref{sec:IO_for_VRPTW} and for the Amazon Challenge.}
\label{fig:uniform_theta}
\end{figure}

\subsubsection{Alternative performance metric}
\label{app:error}

\begin{figure}
\centering
\captionsetup[subfigure]{width=0.96\linewidth}%
    \begin{subfigure}[t]{0.45\linewidth}
        \includegraphics[width = \linewidth]{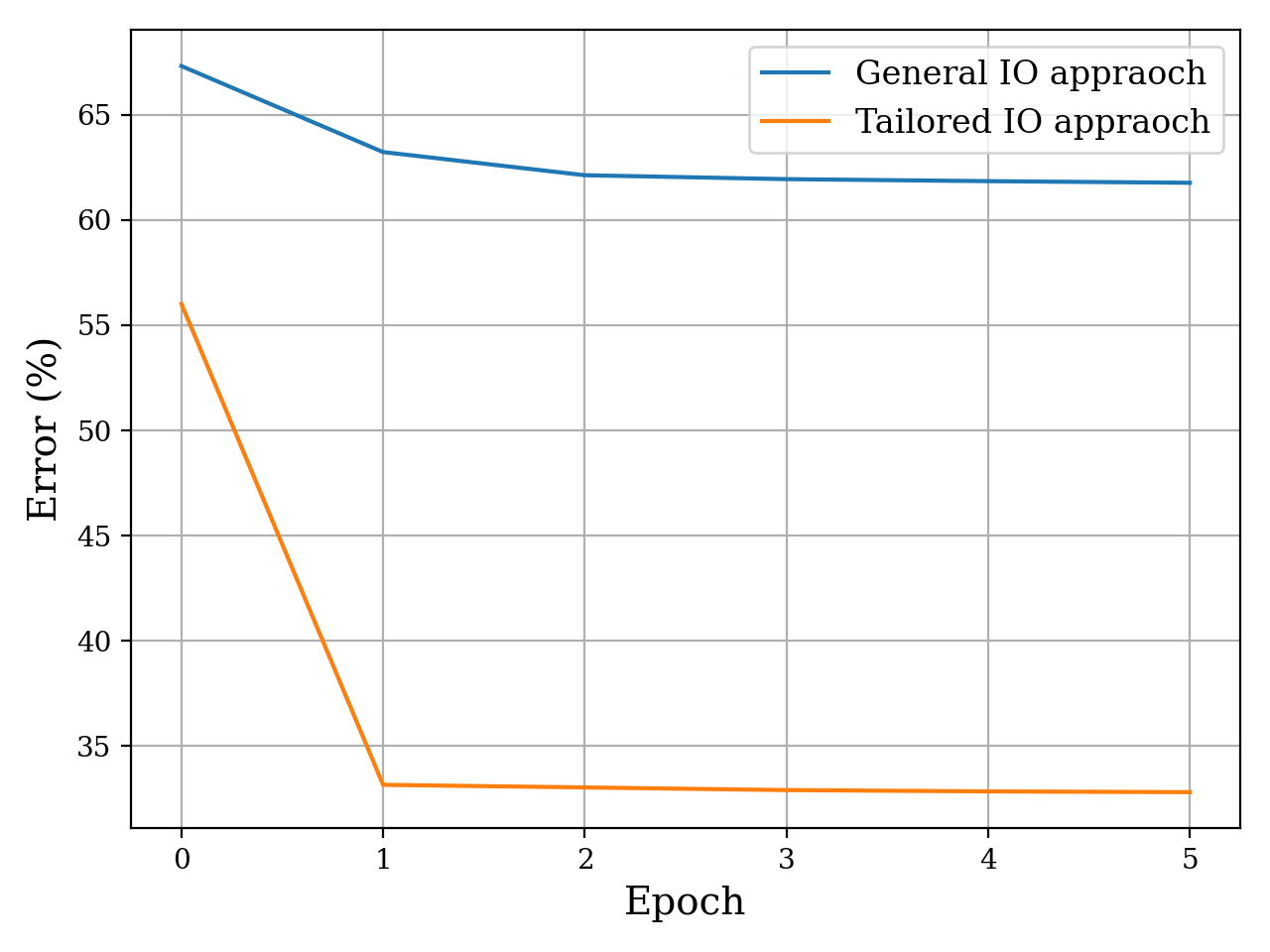}
        \caption{Performance of the general and tailored IO approaches using the zone sequence prediction error.}
        \label{fig:error_amazon}
    \end{subfigure}
    \begin{subfigure}[t]{0.45\linewidth}
        \includegraphics[width = \linewidth]{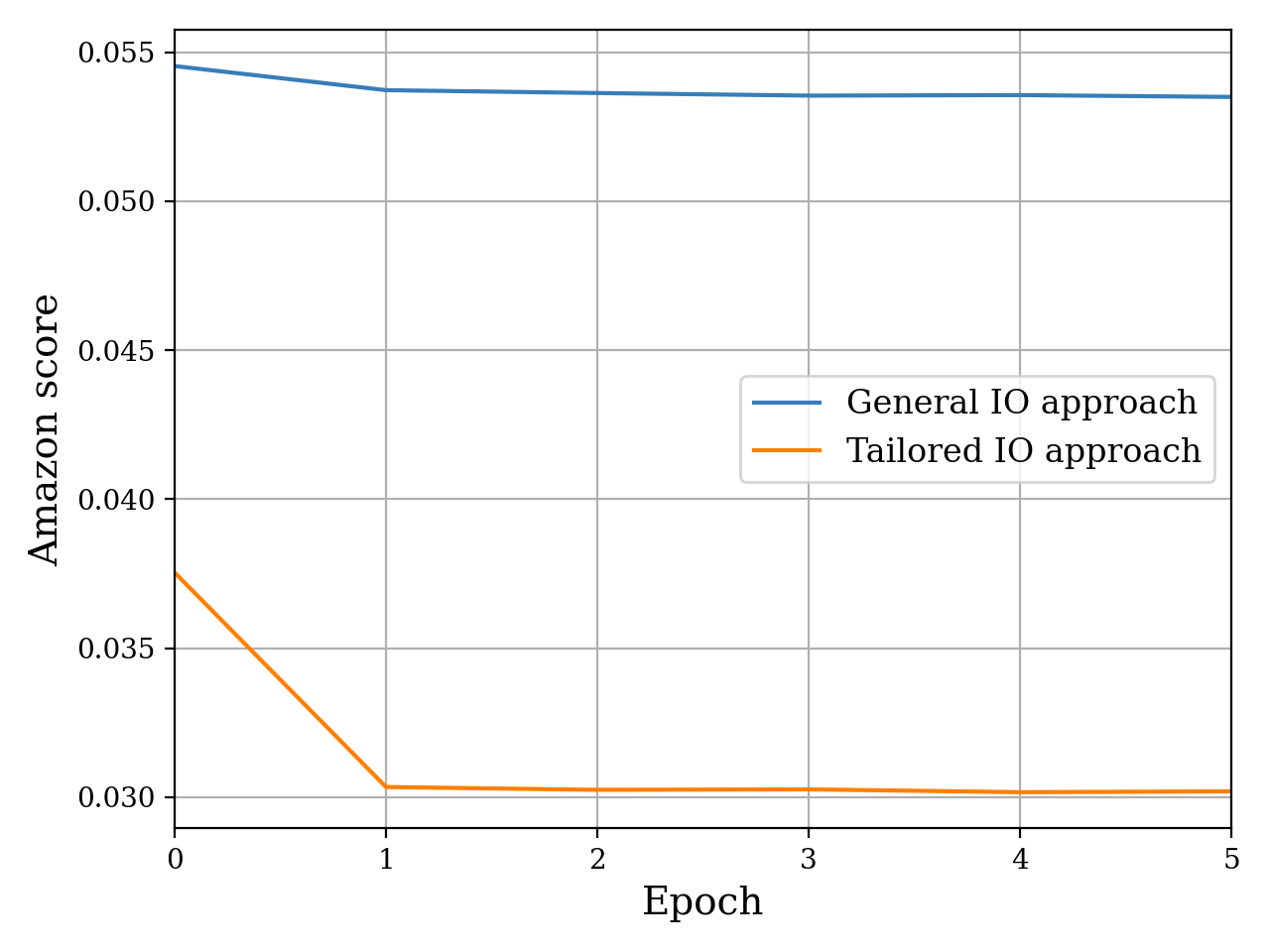}
        \caption{Performance of the general and tailored IO approaches using the Amazon score.}
        \label{fig:score_amazon}
    \end{subfigure}
\caption{Comparison between the zone sequence prediction error and Amazon score performance metrics.}
\label{fig:error_vs_score}
\end{figure}

In Section \ref{sec:numerical}, we evaluated our results for the Amazon Challenge in terms of the Amazon score (see Section \ref{sec:challenge}). In this section, we present results in terms of a zone sequence prediction error metric. Namely, given a zone sequence obtained from a learned IO model (i.e., the output of Step 3 of our IO approach, see Figure \ref{fig:complete_method}), and the zone sequence $\hat{x}$ from the training or test dataset, the prediction error $\text{Error}(x, \hat{x})$ counts how many zones in $\hat{x}$ are in the wrong position compared to $x$. For instance, if $x = \{\text{T-7.1C}, \text{T-7.1B}, \text{T-8.1B}, \text{T-8.1C}, \text{T-8.2C} \}$ and $\hat{x} = \{\text{T-7.1B}, \text{T-7.1C}, \text{T-8.1B}, \text{T-8.2C}, \text{T-8.1C} \}$, then $\text{Error}(x, \hat{x}) = 4$. This performance measure can be interpreted as a generalization of the classical 0-1 error used for classification problems, where $\text{0-1}(x, \hat{x}) = 0$ if $x = \hat{x}$, and $\text{0-1}(x, \hat{x}) = 1$ otherwise. Thus, given a dataset of $N$ examples of zone sequences and the respective sequences predicted by the IO model, we define the total (percentage) zone sequence prediction error across the entire dataset as $100 \sum_{i=1}^N \text{Error}(x^{[i]}, \hat{x}^{[i]}) / \sum_{i=1}^N L^{[i]}$, where $L^{[i]}$ is the length of the $i$'th zone sequence. In other words, this value can be interpreted as the percentage of time the IO approach correctly predicts the position of a zone in the zone sequence. Figure \ref{fig:error_amazon} shows the performance of the general and our tailored IO approaches from Section \ref{sec:IO_amazon} in terms of the zone sequence prediction error. For comparison, we also show their respective Amazon score in Figure \ref{fig:score_amazon}. As can be seen, the IO models show (qualitatively) similar performance, in terms of both prediction error and Amazon score metrics.

\subsubsection{Route examples}
\label{sec:examples}

\begin{figure}
\centering
\captionsetup[subfigure]{width=0.96\linewidth}%
    \begin{subfigure}[t]{0.45\linewidth}
        \includegraphics[width = \linewidth]{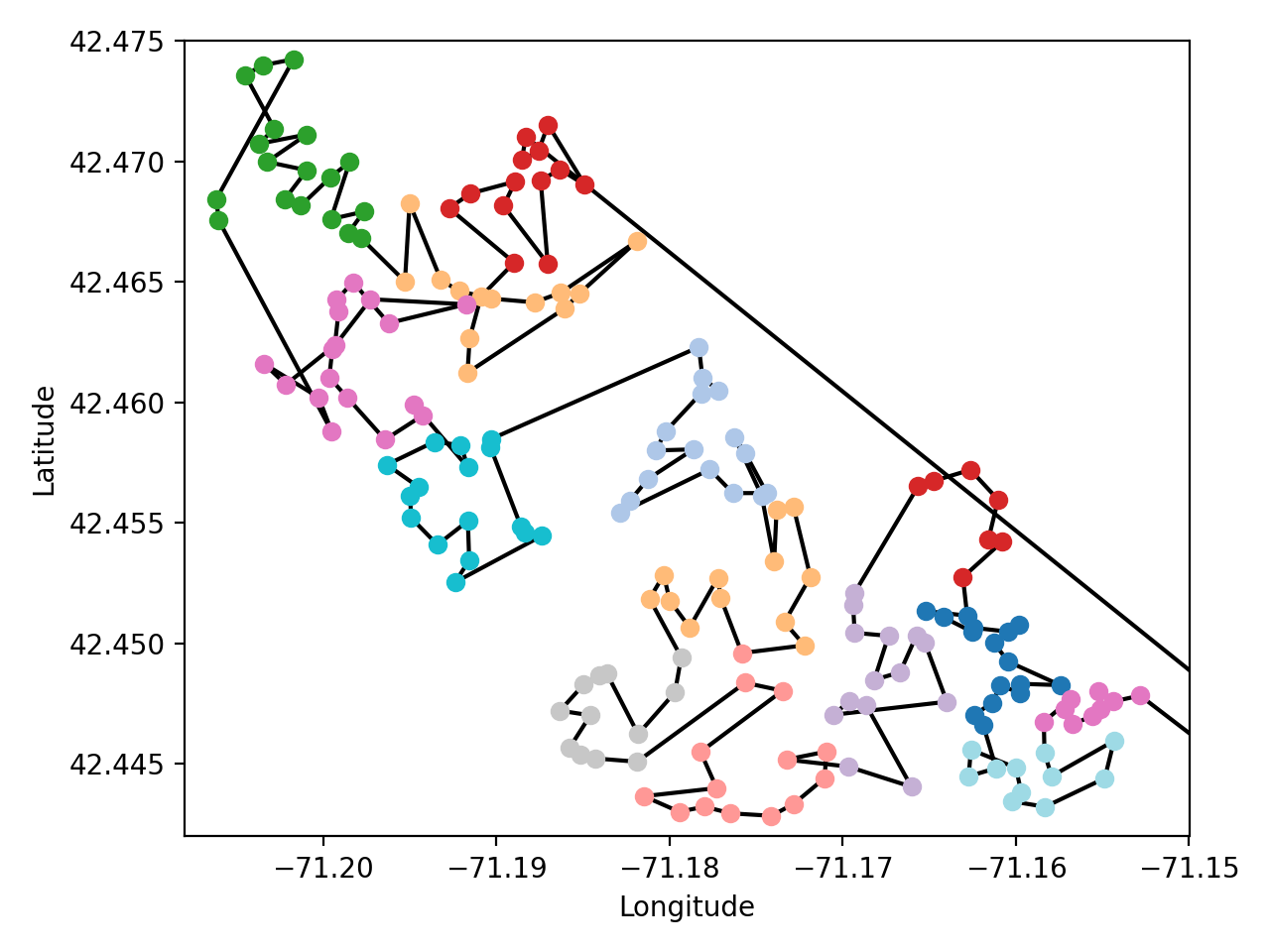}
        \caption{Route example from the test dataset.}
        \label{fig:route_example_1}
    \end{subfigure}
    \begin{subfigure}[t]{0.45\linewidth}
        \includegraphics[width = \linewidth]{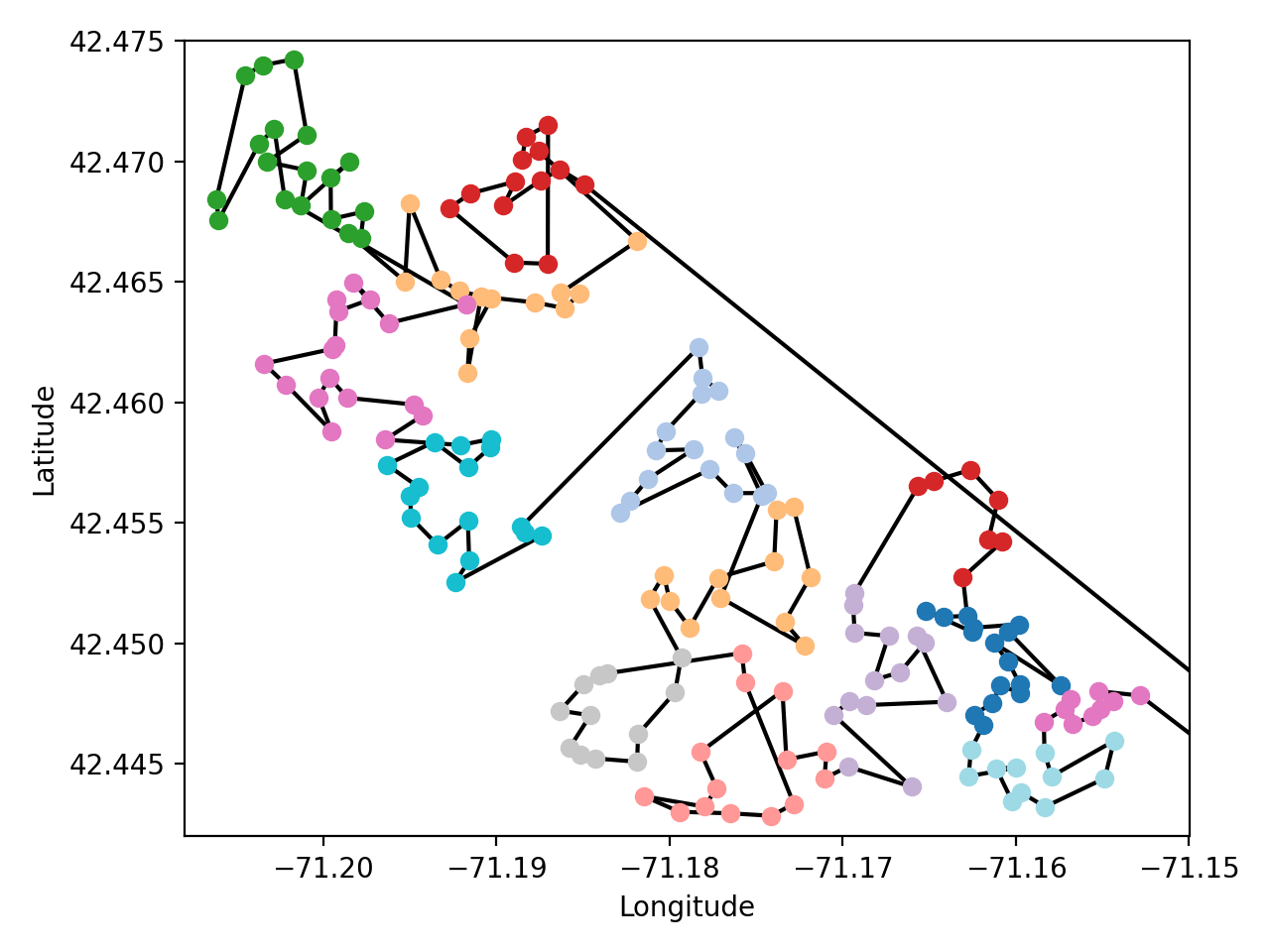}
        \caption{Output route from the IO model.}
        \label{fig:route_example_2}
    \end{subfigure}
\caption{Comparisson between the driver's route from the Amazon Challenge and the output of the IO model. In this example, the zone sequence predicted by the IO model perfectly matches the one from the original route.}
\label{fig:route_example_perfect}
\end{figure}

\begin{figure}
\centering
\captionsetup[subfigure]{width=0.96\linewidth}%
    \begin{subfigure}[t]{0.45\linewidth}
        \includegraphics[width = \linewidth]{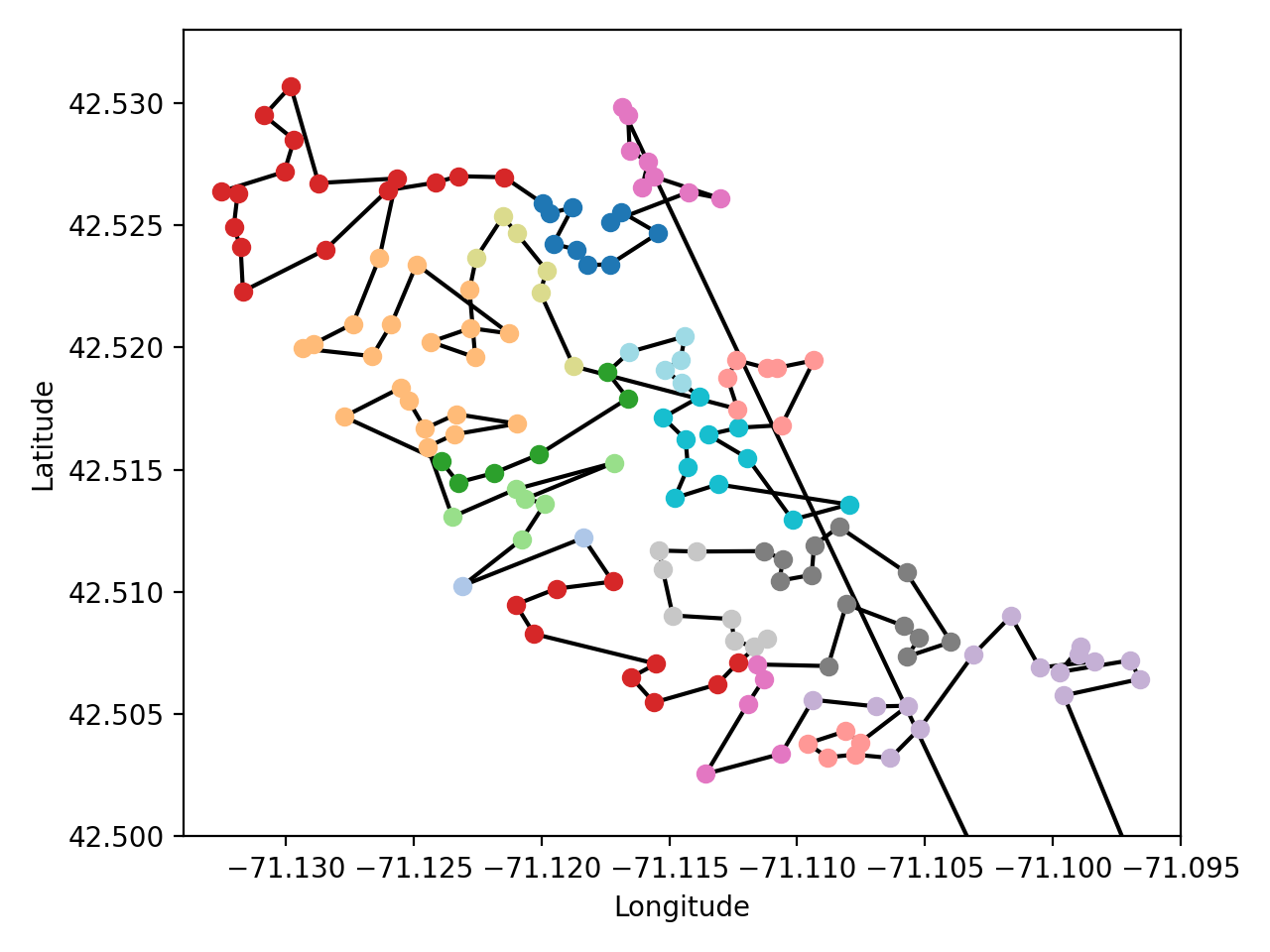}
        \caption{Route example from the test dataset.}
        \label{fig:route_example_3}
    \end{subfigure}
    \begin{subfigure}[t]{0.45\linewidth}
        \includegraphics[width = \linewidth]{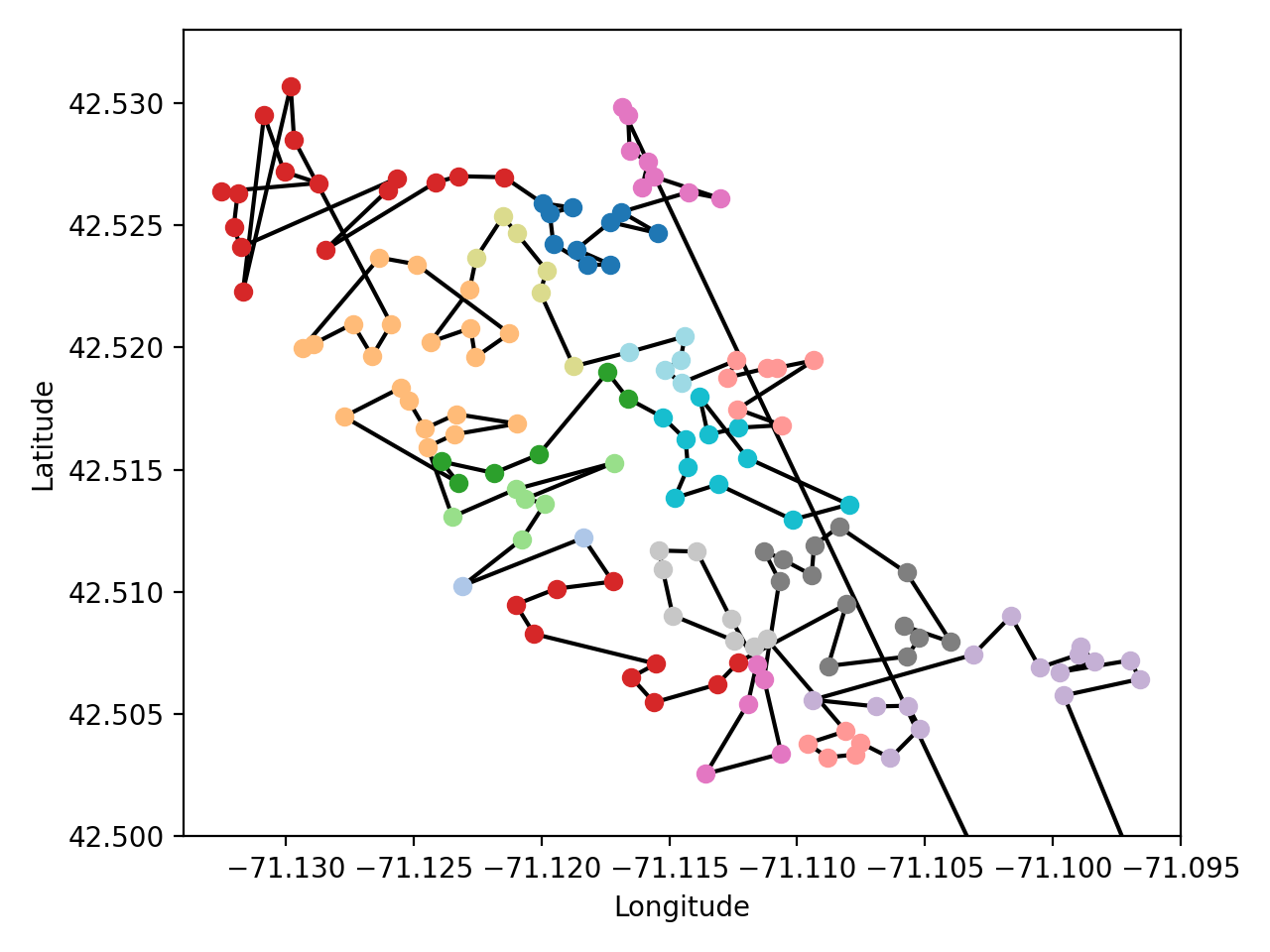}
        \caption{Output route from the IO model.}
        \label{fig:route_example_4}
    \end{subfigure}
\caption{Comparisson between the driver's route from the Amazon Challenge and the output of the IO model. In this example, the zone sequence predicted by the IO model does not match the one from the original route.}
\label{fig:route_example_not_perfect}
\end{figure}

In this section, we show some route examples, comparing the routes of human drivers from the Amazon Challenge dataset, with the routes from our IO approach. In Figure \ref{fig:route_example_perfect}, we show an example where the zone sequence predicted by the IO model (i.e., Step 3 in Section \ref{sec:complete_method}) perfectly matches the one from the original route, where nodes of different colors represent different zones. As can be noticed, even though the zone sequence is the same, the sequence of stops within each zone is different. However, even with these differences, the Amazon score of the route in Figure \ref{fig:route_example_2} is still quite small ($0.0046$). This phenomenon is generally observed for the Amazon Challenge: perfectly predicting the zone sequence tends to lead to a small Amazon score, even with different sequences of stops within each zone. This observation supports our IO approach to the challenge, where we focused on predicting the correct zone sequence, instead of the stop sequences.

In Figure \ref{fig:route_example_not_perfect} we show an example where the zone sequence from the original route (Figure \ref{fig:route_example_3}) differs from the one predicted by the IO model (Figure \ref{fig:route_example_4}). In particular, the zone prediction error (defined in Section \ref{app:error}) between these two routes is $31.6$\%. Still, since the zones predicted in the wrong order are close to each other, the Amazon score of the route in Figure \ref{fig:route_example_4} is relatively small ($0.0117$). This example provides some intuition on the results from Figure \ref{fig:error_vs_score}: even though the average zone prediction error of the proposed tailored IO approach is around $32$\%, the fact that it still guarantees a low Amazon score means that even when predicting the wrong zone sequence, the predicted zones in general similar (i.e., geographically close) to the actual ones from the test dataset.

%%%%%%%%%%%%%%%%%%%%%%%%%%%%%%%%%%%%%%%%%%%%%%%%%%%%%%%%%%%%%%%%%%%%

\section{Conclusion and Further Work}
\label{sec:conclusion}

In this work, we propose an Inverse Optimization (IO) methodology for learning the preferences of decision-makers in routing problems. To exemplify the potential and flexibility of our approach, we first apply it to a simple CVRP problem, where we give insight into how our IO algorithm works by modifying the learned edge weights by comparing the example routes to the optimal route we get using the current learned weights. Then, we apply it to a larger VRPTW example, comparing the performance of our proposed algorithm with different approaches from the literature. Finally, we show the real-world potential of our approach by using it to tackle the Amazon Challenge, where the goal of the challenge was to develop routing models that replicate the behavior of real-world expert human drivers. To do so, we first define what we call Restricted TSPs (i.e., TSPs for which only a subset of the nodes is required to be visited). Given a dataset of signals (nodes to be visited) and expert responses (R-TSPs tours), we have shown how to use IO to learn the edge weights that explain the observed data. In the context of the Amazon Challenge, learning these edge weights translates to learning the sequence of city zones preferred by expert human drivers. Then, from a sequence of zones, we constructed a complete TSP tour over the required stops. The final score of our approach is \textbf{0.0302}, which ranks 2nd compared to the 48 models that qualified for the final round of the Amazon Challenge.

As future research directions, it would be interesting to apply our methodology to different and more complex classes of routing problems, for instance, dynamic VRPs, routing problems with backhauls, as well as routing problems with continuous decision variables. Moreover, although in this work we focused on routing problems, our methodology could also be adapted and tailored to different classes of problems with a binary decision space, such as 0-1 knapsack problems. Given the modularity/flexibility of our IO methodology, we believe it has the potential to be used for a wide range of real-world decision-making problems.

%%%%%%%%%%%%%%%%%%%%%%%%%%%%%%%%%%%%%%%%%%%%%%%%%%%%%%%%%%%%%%%%%%%%

\bibliographystyle{plain}
\bibliography{references}

\begin{thebibliography}{10}

\bibitem{akhtar2021learning}
Syed~Adnan Akhtar, Arman~Sharifi Kolarijani, and Peyman~Mohajerin Esfahani.
\newblock Learning for control: An inverse optimization approach.
\newblock {\em IEEE Control Systems Letters}, 6:187--192, 2021.

\bibitem{amazon2021amazon}
{Amazon.com, Inc.}
\newblock Amazon last mile routing research challenge, 2021.
\newblock URL: \url{https://routingchallenge.mit.edu/}. Last visited on
  2022/10/28.

\bibitem{amazon2021scoring}
{Amazon.com, Inc.}
\newblock Amazon routing challenge: scoring, 2021.
\newblock URL: \url{https://github.com/MIT-CAVE/rc-cli/tree/main/scoring}. Last
  visited on 2023/05/20.

\bibitem{amazon2021last}
{Amazon.com, Inc.}
\newblock Last-mile routing challenge team performance and leaderboard, 2021.
\newblock URL:
  \url{https://routingchallenge.mit.edu/last-mile-routing-challenge-team-performance-and-leaderboard/}.
  Last visited on 2022/10/28.

\bibitem{arslan2021data}
Okan Arslan and Rasit Abay.
\newblock Data-driven vehicle routing in last-mile delivery.
\newblock In {\em Technical Proceedings of the {A}mazon Last Mile Routing
  Research Challenge}. dspace.mit.edu, 2021.

\bibitem{aswani2018inverse}
Anil Aswani, Zuo-Jun Shen, and Auyon Siddiq.
\newblock Inverse optimization with noisy data.
\newblock {\em Operations Research}, 66(3):870--892, 2018.

\bibitem{barmann2017emulating}
Andreas B{\"a}rmann, Sebastian Pokutta, and Oskar Schneider.
\newblock Emulating the expert: Inverse optimization through online learning.
\newblock In {\em International Conference on Machine Learning}, pages
  400--410. PMLR, 2017.

\bibitem{bertsekas2008nonlinear}
Dimitri~P. Bertsekas.
\newblock {\em Nonlinear programming}.
\newblock Athena Scientific, 2008.

\bibitem{bertsekas2015convex}
Dimitri~P. Bertsekas.
\newblock {\em Convex optimization algorithms}.
\newblock Athena Scientific, 2015.

\bibitem{bodur2022inverse}
Merve Bodur, Timothy C.~Y. Chan, and Ian~Yihang Zhu.
\newblock Inverse mixed integer optimization: Polyhedral insights and trust
  region methods.
\newblock {\em INFORMS Journal on Computing}, 34(3):1471--1488, 2022.

\bibitem{burton1992instance}
Didier Burton and Ph.~L. Toint.
\newblock On an instance of the inverse shortest paths problem.
\newblock {\em Mathematical programming}, 53(1):45--61, 1992.

\bibitem{canoy2024probability}
Rocsildes Canoy, Victor Bucarey, Jayanta Mandi, Maxime Mulamba, Yves
  Molenbruch, and Tias Guns.
\newblock Probability estimation and structured output prediction for learning
  preferences in last mile delivery.
\newblock {\em Computers \& Industrial Engineering}, page 109932, 2024.

\bibitem{canoy2021learn}
Rocsildes Canoy, Víctor Bucarey, Jayanta Mandi, and Tias Guns.
\newblock Learn-n-route: Learning implicit preferences for vehicle routing.
\newblock {\em arXiv preprint arXiv:2101.03936}, 2021.

\bibitem{canoy2019vehicle}
Rocsildes Canoy and Tias Guns.
\newblock Vehicle routing by learning from historical solutions.
\newblock In {\em Principles and Practice of Constraint Programming: 25th
  International Conference, CP 2019, Stamford, CT, USA, September 30--October
  4, 2019, Proceedings 25}, pages 54--70. Springer, 2019.

\bibitem{canoy2021tsp}
Rocsildes Canoy, Jayanta Mandi, and Victor Bucarey.
\newblock Tsp with learned zone preferences for last mile vehicle dispatching.
\newblock In {\em Technical Proceedings of the Amazon Last Mile Routing
  Research Challenge}. dspace.mit.edu, 2021.

\bibitem{chan2023inverse}
Timothy C.~Y. Chan, Rafid Mahmood, and Ian~Yihang Zhu.
\newblock Inverse optimization: Theory and applications.
\newblock {\em Operations Research}, 2023.

\bibitem{chow2012inverse}
Joseph Y.~J. Chow and Will~W. Recker.
\newblock Inverse optimization with endogenous arrival time constraints to
  calibrate the household activity pattern problem.
\newblock {\em Transportation Research Part B: Methodological}, 46(3):463--479,
  2012.

\bibitem{chow2014nonlinear}
Joseph Y.~J. Chow, Stephen~G. Ritchie, and Kyungsoo Jeong.
\newblock Nonlinear inverse optimization for parameter estimation of
  commodity-vehicle-decoupled freight assignment.
\newblock {\em Transportation Research Part E: Logistics and Transportation
  Review}, 67:71--91, 2014.

\bibitem{chung2012inverse}
Yerim Chung and Marc Demange.
\newblock On inverse traveling salesman problems.
\newblock {\em 4OR}, 10(2):193--209, 2012.

\bibitem{contreras2021learning}
Ivan Contreras and Dang Le.
\newblock Learning routing models with cluster precedence constraints.
\newblock In {\em Technical Proceedings of the {A}mazon Last Mile Routing
  Research Challenge}. dspace.mit.edu, 2021.

\bibitem{cook2022constrained}
William Cook, Stephan Held, and Keld Helsgaun.
\newblock Constrained local search for last-mile routing.
\newblock {\em Transportation Science}, 2022.

\bibitem{dantzig1954solution}
George Dantzig, Ray Fulkerson, and Selmer Johnson.
\newblock Solution of a large-scale traveling-salesman problem.
\newblock {\em Journal of the operations research society of America},
  2(4):393--410, 1954.

\bibitem{duan2011heuristic}
Zhaoyang Duan and Lizhi Wang.
\newblock Heuristic algorithms for the inverse mixed integer linear programming
  problem.
\newblock {\em Journal of Global Optimization}, 51:463--471, 2011.

\bibitem{farago2003inverse}
Andr{\'a}s Farag{\'o}, {\'A}ron Szentesi, and Bal{\'a}zs Szviatovszki.
\newblock Inverse optimization in high-speed networks.
\newblock {\em Discrete Applied Mathematics}, 129(1):83--98, 2003.

\bibitem{fosgerau2013link}
Mogens Fosgerau, Emma Frejinger, and Anders Karlstrom.
\newblock A link based network route choice model with unrestricted choice set.
\newblock {\em Transportation Research Part B: Methodological}, 56:70--80,
  2013.

\bibitem{googleOR}
Google.
\newblock Traveling salesperson problem, 2021.
\newblock URL: \url{https://developers.google.com/optimization/routing/tsp}.
  Last visited on 2022/10/28.

\bibitem{guo2021last}
Xiaotong Guo, Baichuan Mo, and Qingyi Wang.
\newblock Last-mile delivery trajectory prediction using hierarchical tsp with
  customized cost matrix.
\newblock In {\em Technical Proceedings of the {A}mazon Last Mile Routing
  Research Challenge}. dspace.mit.edu, 2021.

\bibitem{gurobi}
{Gurobi Optimization}.
\newblock {Traveling Salesperson Problem}, 2021.
\newblock URL: \url{https://www.gurobi.com/jupyter_models/traveling-salesman/}.
  Last visited on 2023/06/12.

\bibitem{helsgaun2017extension}
Keld Helsgaun.
\newblock An extension of the lin-kernighan-helsgaun tsp solver for constrained
  traveling salesman and vehicle routing problems.
\newblock 2017.

\bibitem{kivinen1997exponentiated}
Jyrki Kivinen and Manfred~K Warmuth.
\newblock Exponentiated gradient versus gradient descent for linear predictors.
\newblock {\em information and computation}, 132(1):1--63, 1997.

\bibitem{lacoste2012simpler}
Simon Lacoste-Julien, Mark Schmidt, and Francis Bach.
\newblock A simpler approach to obtaining an o (1/t) convergence rate for the
  projected stochastic subgradient method.
\newblock {\em arXiv preprint arXiv:1212.2002}, 2012.

\bibitem{liu2020integrating}
Shan Liu, Hai Jiang, Shuiping Chen, Jing Ye, Renqing He, and Zhizhao Sun.
\newblock Integrating dijkstra’s algorithm into deep inverse reinforcement
  learning for food delivery route planning.
\newblock {\em Transportation Research Part E: Logistics and Transportation
  Review}, 142:102070, 2020.

\bibitem{merchan20222021}
Daniel Merch{\'a}n, Jatin Arora, Julian Pachon, Karthik Konduri, Matthias
  Winkenbach, Steven Parks, and Joseph Noszek.
\newblock 2021 {A}mazon last mile routing research challenge: Data set.
\newblock {\em Transportation Science}, 2022.

\bibitem{mishchenko2020random}
Konstantin Mishchenko, Ahmed Khaled, and Peter Richt{\'a}rik.
\newblock Random reshuffling: Simple analysis with vast improvements.
\newblock {\em Advances in Neural Information Processing Systems},
  33:17309--17320, 2020.

\bibitem{mohajerin2018data}
Peyman Mohajerin~Esfahani, Soroosh Shafieezadeh-Abadeh, Grani~A Hanasusanto,
  and Daniel Kuhn.
\newblock Data-driven inverse optimization with imperfect information.
\newblock {\em Mathematical Programming}, 167(1):191--234, 2018.

\bibitem{ortec2022euro}
{ORTEC}.
\newblock Euro meets neurips 2022 vehicle routing competition, 2022.
\newblock URL: \url{https://euro-neurips-vrp-2022.challenges.ortec.com/}. Last
  visited on 2023/01/24.

\bibitem{pitombeiraneto2021route}
Anselmo~R. Pitombeira-Neto.
\newblock Route sequence prediction through inverse reinforcement learning.
\newblock In {\em Technical Proceedings of the {A}mazon Last Mile Routing
  Research Challenge}. dspace.mit.edu, 2021.

\bibitem{song2021inverse}
Jianhan Song, Ashutosh Shukla, and Josiah Coad.
\newblock Inverse reinforcement learning for learning driver utility function
  for package delivery routing problems.
\newblock In {\em Technical Proceedings of the Amazon Last Mile Routing
  Research Challenge}. dspace.mit.edu, 2021.

\bibitem{toth2002vehicle}
Paolo Toth and Daniele Vigo.
\newblock {\em The vehicle routing problem}.
\newblock SIAM, 2002.

\bibitem{wang2009cutting}
Lizhi Wang.
\newblock Cutting plane algorithms for the inverse mixed integer linear
  programming problem.
\newblock {\em Operations research letters}, 37(2):114--116, 2009.

\bibitem{winkenbach2021technical}
Matthias Winkenbach, Steven Parks, and Joseph Noszek.
\newblock Technical proceedings of the {A}mazon last mile routing research
  challenge.
\newblock {\em dspace.mit.edu}, 2021.

\bibitem{wouda2024PyVRP}
Niels~A. Wouda, Leon Lan, and Wouter Kool.
\newblock {PyVRP}: a high-performance {VRP} solver package.
\newblock {\em INFORMS Journal on Computing}, 2024.

\bibitem{wu2022learning}
Chen Wu, Yin Song, Verdi March, and Eden Duthie.
\newblock Learning from drivers to tackle the amazon last mile routing research
  challenge.
\newblock {\em arXiv preprint arXiv:2205.04001}, 2022.

\bibitem{wulfmeier2017large}
Markus Wulfmeier, Dushyant Rao, Dominic~Zeng Wang, Peter Ondruska, and Ingmar
  Posner.
\newblock Large-scale cost function learning for path planning using deep
  inverse reinforcement learning.
\newblock {\em The International Journal of Robotics Research},
  36(10):1073--1087, 2017.

\bibitem{xu2018network}
Susan~Jia Xu, Mehdi Nourinejad, Xuebo Lai, and Joseph Y.~J. Chow.
\newblock Network learning via multiagent inverse transportation problems.
\newblock {\em Transportation Science}, 52(6):1347--1364, 2018.

\bibitem{zattoniscroccaro2023amazon}
Pedro Zattoni~Scroccaro.
\newblock Inverse optimization for the amazon challenge.
\newblock \url{https://github.com/pedroszattoni/amazon-challenge}, 2023.

\bibitem{zattoniscroccaro2023invopt}
Pedro Zattoni~Scroccaro.
\newblock Inv{O}pt: {I}nverse {O}ptimization with {P}ython.
\newblock \url{https://github.com/pedroszattoni/invopt}, 2023.

\bibitem{zattoniscroccaro2023learning}
Pedro Zattoni~Scroccaro, Bilge Atasoy, and Peyman Mohajerin~Esfahani.
\newblock Learning in inverse optimization: Incenter cost, augmented
  suboptimality loss, and algorithms.
\newblock {\em arXiv preprint arXiv:2305.07730}, 2023.

\end{thebibliography}

\end{document}